\newcommand{\kronecker}{\raisebox{1.2pt}{\ensuremath{\:\otimes\:}}}
\newcommand{\core}{\scriptsize\mbox{\textcircled{\#}}}
\definecolor{astral}{RGB}{46,116,181}
\newtheorem{theorem}{Theorem}[section]
\newtheorem{lemma}[theorem]{Lemma}
\newtheorem{corollary}[theorem]{Corollary}
\newtheorem{proposition}[theorem]{Proposition}
\newtheorem{definition}[theorem]{Definition}
\newtheorem{example}[theorem]{Example}
\newtheorem{remark}[theorem]{Remark}
\definecolor{darkslategray}{rgb}{0.18, 0.31, 0.31}
\definecolor{warmblack}{rgb}{0.0, 0.26, 0.26}
\journal{arxiv}
\newcommand{\C}{{\mathbb C}}
\newcommand{\mc}[1]{\mathcal {#1}}
\newcommand{\rg}{{\mathscr{R}}}
\newcommand{\nl}{{\mathscr{N}}}
\newcommand{\n}{{*_n}}
\newcommand{\m}{{*_m}}
\begin{document}

\begin{frontmatter}

\title{ \textcolor{warmblack}{\bf 
 Reverse-order law for core inverse of tensors
 }}

\author{Jajati Kesahri Sahoo $^\dagger$ and   Ratikanta Behera$^*$
}

\address{$^\dagger$
Department of Mathematics,\\
BITS Pilani, K.K. Birla Goa Campus, Goa, India
\\\textit{E-mail}: \texttt{jksahoo\symbol{'100}goa.bits-pilani.ac.in }
\vspace{.3cm}\\
$^*$
Department of Mathematics and Statistics,\\
Indian Institute of Science Education and Research Kolkata,\\
 Nadia, West Bengal, India.\\
\textit{E-mail}: \texttt{ratikanta@iiserkol.ac.in}

}
\begin{abstract}
\textcolor{warmblack}{
The notion of the core inverse of tensors with the Einstein product was introduced, very recently. This paper we establish some sufficient and necessary conditions for reverse-order law of this inverse. Further, we present new results related to the mixed-type reverse-order law for core inverse. In addition to these, we discuss core inverse solutions of multilinear systems of tensors via the Einstein product. The prowess of the inverse is demonstrated for solving Poisson problem in the multilinear system framework.
}
\end{abstract}

\begin{keyword}
Generalized inverse, Core inverse, Tensor, Einstein product, Reverse order law.\\
{\em AMS Subject Classifications:} 15A69, 15A09.
\end{keyword}

\end{frontmatter}

\section{Introduction and motivation}\label{sec1}
Let $\mathbb{C}^{I_1\times\cdots\times I_N}$  be the set of  order
$N$ and dimension $I_1 \times \cdots \times I_N$ tensors over the complex
field $\mathbb{C}$ and the entry of tensor $\mc{A}$ is denoted by $a_{i_1...i_N}$.
Here $\mc{A} \in \mathbb{C}^{I_1\times\cdots\times I_N}$ is a multiway array with  $N$-th order tensor, and $I_1, I_2, \cdots, I_N$ are dimensions of the first, second,  $\cdots$ , $N$th way, respectively. Note that throughout the paper, tensors are represented in calligraphic letters like  $\mc{A}$, and the notation $\mc{A}_{i_1...i_n}$ represents the scalars. The Einstein product (\cite{ein}) $ \mc{A}\n\mc{B} \in \mathbb{C}^{I_1\times\cdots\times
I_N \times J_1 \times\cdots\times J_M }$ of tensors $\mc{A} \in \mathbb{C}^{I_1\times\cdots\times I_N \times K_1
\times\cdots\times K_N }$ and $\mc{B} \in
\mathbb{C}^{K_1\times\cdots\times K_N \times J_1 \times\cdots\times
J_M }$   is defined by the operation $\n$ via
\begin{equation}\label{Eins}
(\mc{A}\n\mc{B})_{i_1...i_nj_1...j_m}
=\displaystyle\sum_{k_1,\ldots,k_n}\mc{A}_{{i_1...i_n}{k_1...k_n}}\mc{B}_{{k_1...k_n}{j_1...j_m}}\in \mathbb{C}^{I_1\times\cdots\times I_n \times J_1 \times\cdots\times J_m }.
\end{equation}
The Einstein products ideally suited to addressing the problem of finding multilinear structure in multiway data-sets. Such multiscale issues are encountered in many fields of practical interest \cite{RD, ein, lai, sun}.

In connection with tensors and multilinear systems, Brazell {\it et al.} \cite{BraliNT13} introduced the notion of the
ordinary tensor inverse,  as follows. A tensor $\mc{X} \in
\mathbb{C}^{I_1\times\cdots\times I_N \times I_1 \times\cdots\times
I_N}$ is called  the  {\it inverse} of $\mc{A}\in
\mathbb{C}^{I_1\times\cdots\times I_N \times I_1 \times\cdots\times
I_N}$  if it satisfies $\mc{A}\n\mc{X}=\mc{X}\n\mc{A}=\mc{I}$. It
is denoted  by $\mc{A}^{-1}$. As a consequence, most of the current efforts are focused on developing  different type of inverses of tensors depending on the application \cite{BraliNT13, ishteva2011best, ShiWeL13}. Sun et al. \cite{sun} introduced Moore-Penrose inverse of tensors and discussed the existence and uniqueness of the inverse with the Einstein product. In this context, Behera and Mishra \cite{RD} discussed ${i}$-inverses (for $i = 1, 2, 3, 4$) of even-order tensors and general solutions of multilinear systems. Subsequently, the LU and the Schur decompositions in the tensor were discussed in \cite{liang2019}. Further, solutions of a multilinear system represented by generalized inverses of tensors were discussed in  \cite{sun2018}. Using such theory of the Einstein product,  Ji and Wei \cite{weit2}  introduced weighted Moore-Penrose inverse of tensors and presented a few characterization of the least-squares solutions to a multilinear system. Further, full rank decomposition \cite{liangBing2019} of arbitrary-order tensors  using reshape operation is discussed in \cite{Behera2018}. At the beginning of this work, Stanimirovic at el. \cite{stan} discussed the effective definition of the tensor rank and outer inverses of tensors. In addition to this, the Drazin inverse of even-order tensors and its applications via the Einstein product is investigated in \cite{RAJ, Ji18}. Recently, Liang and Zheng \cite{liang2018} defined an iterative algorithm for solving Sylvester tensor equation using such theory. 

On the other hand, the concept of tensor inversion technique has opened new perspectives for solving multilinear systems across many fields in science and engineering \cite{ SilL08, kolda, Kru77, LalmV00,  liang2018, SidLaP17}). The inversion of tensor impinges upon a definition of tensor-tensor multiplication, and the treatment of multilinear system is a straightforward task due to the nature of the Einstein product. However, this becomes particularly challenging for problems having singular or arbitrary-order tensor.  Generalized inverses of tensors take advantage to do this \cite{ RD, RAJ, BraliNT13, Ji18}.  In this context, Sahoo at al. \cite{JRPKH} introduced the core inverses of tensors and discussed some characterizations of inverses. At last, they posed the open question regarding reverse-order law for core inverse of the tensors. It will be more interesting if we study the reverse-order law for the core inverse of tensors.

It is well known that $(\mc{A}\n \mc{B})^{-1} = \mc{B}^{-1}\n \mc{A}^{-1},$ where  $\mc{A}\in \mathbb{C}^{I_1\times\cdots\times I_N \times I_1
\times\cdots\times I_N }$ and $\mc{B}\in \mathbb{C}^{I_1\times\cdots\times I_N \times I_1
\times\cdots\times I_N }$ are a pair of invertible tensors such that their Einstein product $\mc{A}\n\mc{B}$ is invertible. This equality is called the reverse-order law for the ordinary inverse. In general, this equality doesn’t hold if we replace the ordinary inverse by the generalized inverse of tensors. This reverse-order law is a class of interesting problems that are fundamental in the theory of generalized inverses of tensors. In this context, Panigrahy at al \cite{RDP} first gave a necessary and sufficient condition of the reverse-order law
for the Moore-Penrose inverse of tensors via the Einstein product. Since then, many authors studied the reverse-order law for various classes of generalized inverses of tensors \cite{Mispa18, RDP,  panigrahy2019}.

Indeed, the reverse-order law, first studied a necessary and sufficient condition for the Moore-Penrose inverse in from of rectangular matrices in \cite{greville1966}. Baskett and Katz \cite{baskett1969} then discussed the same theory for $EP_r$ matrices, where $A\in {\C}^{n\times n}$ of rank $r$ is called $EP_r$, if $A$ and $A^*$, the conjugate transpose of $A$, have the same null spaces.  Further, Deng \cite{deng2011} investigated some necessary and sufficient conditions of the reverse-order law for the group inverse of linear bounded operators on Hilbert spaces. The reverse-order law was also studied for  other generalized inverses of matrices (\cite{Barwick1974}, \cite{Cao2004}, \cite{sun1998}). Under the influence of the vast work on the reverse-order law and solution of multilinear systems. Here we discuss reverse-order law for core inverse of tensors and a few characterizations of the multilinear systems. The results are new even in the case of matrices.

We organize the paper as follows. In the next section, we discuss some notations and definitions along with a few preliminary results, which are helpful in proving the main results. In Section 3 contains a few necessary and sufficient conditions of the reverse-order law for the core inverses of tensors via the Einstein product. Further, in section 4, we prove several results concerning mixed-type of the reverse-order law. General solutions of multilinear systems using core inverse of the tensors are addressed in section 5, and the conclusion is outlined in section 6.

\section{Preliminaries}
\subsection{Definitions and terminology}
For convenience, we first briefly explain some additional notations to increase the efficiency of the presentation, as follows.
\begin{equation*}
 \textbf{M}(k) = M_1\times M_2 \times\cdots\times M_k,~ \textbf{m}(k) = \{m_1,m_2,\ldots, m_k | 1\leq m_j \leq M_j,~ j = 1, \ldots , k\}.
\end{equation*}
In view of the above notation, the tensor $\mc{A} =
(a_{m_1,\ldots, m_k} )_{1\leq m_j\leq M_j }, j = 1,\ldots, k$ will be shortly denoted by $\mc{A} = (a_{\textbf{m}(k)}).$ Further, the
following notation is useful:
\begin{equation*}
 \sum_{\textbf{m}(k)}=\sum_{m_1,\ldots, m_k}=
\sum_{m_1=1}^{M_1}\cdots\sum_{m_1=1}^{M_k}
\end{equation*}
Accordingly, the tensor $\mathcal{A} \in \mathbb{C}^{M_{1} \times \cdots \times M_{{m}} \times N_{1} \times \cdots \times N_n}$ will be denoted in a simpler form as $\mathcal{A} \in \mathbb{C}^{\textbf{M}(m) \times \textbf{N}(n)}$.
For a tensor $~\mc{A}=(a_{\textbf{m}(k),\textbf{n}(k)}\in \mathbb{C}^{\textbf{M}(k) \times \textbf{N}(k)},$ the {\it transpose} of $\mc{A}$ is denoted by $\mc{A}^T$ and defined as $\mc{A}^T=(a_{\textbf{n}(k),\textbf{m}(k)}\in \mathbb{C}^{\textbf{N}(k) \times \textbf{M}(k)}$.
 Further, we denote $\mc{O}$ by {\it zero tensor} and $\mc{I}$ is the {\it identity} tensor.
 A tensor $\mc{A}\in
\mathbb{C}^{\textbf{N}(n) \times \textbf{N}(n)}$ is {\it Hermitian}  if  $\mc{A}=\mc{A}^*,$ {\it skew-Hermitian} if $\mc{A}= - \mc{A}^*$, and {\it orthogonal} if $\mc{A}\m\mc{A}^*= \mc{A}^*\n \mc{A}=\mc{I}$. Let us recall the definition of the Moore-Penrose inverse of a tensor which was introduced in \cite{sun}, as follows. 
\begin{definition}( Definition 2.2,  \cite{sun})
Let $\mc{A}\in \mathbb{C}^{\textbf{M}(m) \times \textbf{N}(n)}$. The tensor  $\mc{X}\in \mathbb{C}^{\textbf{N}(n) \times \textbf{M}(m)}$ satisfying
\begin{center}
     \textup{(1)} $\mc{A}\n\mc{X}\m\mc{A}=\mc{A};$ $\textup{(2) }\mc{X}\m\mc{A}\n\mc{X}=\mc{X};$
    \textup{(3)} $(\mc{A}\n\mc{X})^{*}=\mc{A}\n\mc{X};$ \textup{(4)} $(\mc{X}\m\mc{A})^{*}=\mc{X}\m\mc{A}$,
\end{center}
is called Moore-Penrose inverse of $\mc{A}$ and denoted by  $\mc{A}^{\dagger}$.
\end{definition}
Further, the authors of \cite{sun} also discussed the existence and uniqueness of the inverse. Note that, the Moore-Penrose inverse always exists and unique.

Now, we recall the index of a tensor which was first discusses in \cite{Ji18}. The index of a tensor $\mc{A}\in\mathbb{C}^{\textbf{N}(n) \times \textbf{N}(n)}$ is defined as the smallest positive integer $k,$ such that $\rg(\mc{A}^k)=\rg(\mc{A}^{k+1})$ and it is denoted by ind$(\mc{A})$.  If $k=1,$ then the tensor is called index one or group, or core tensor.  Let the tensor $\mc{A}\in\mathbb{C}^{\textbf{N}(n) \times \textbf{N}(n)}$ and ind$(\mc{A})=k$, then the Drazin inverse of $\mc{A}$ is defined in \cite{Ji18} as follows.
\begin{definition}
Let $\mc{A}\in \mathbb{C}^{\textbf{N}(n) \times \textbf{N}(n)}$ and ind$(\mc{A})=k$. A tensor $\mc{X}\in \mathbb{C}^{\textbf{N}(n) \times \textbf{N}(n) }$ satisfying
\begin{center}
\textup{(1$^k$)}~$\mc{A}^{k+1}\n\mc{X}=\mc{A}^k,~\textup{(2)}~\mc{X}\n\mc{A}\n\mc{X}=\mc{X},~\textup{(5)}~\mc{X}\n\mc{A}=\mc{A}\n\mc{X},$
\end{center}
 is called Drazin inverse of $\mc{A}$.
\end{definition}
The Drazin inverse is uniques if exists and denoted by $\mc{A}^D$. In particular, when $k=1$, the Drzain inverse is called group inverse of $\mc{A}$ and denoted by $\mc{A}^{\#}$. Subsequently, the notion of core inverse for complex tensors was introduced by Sahoo et. al. \cite{JRPKH}, recently, as follows.
\begin{definition}\label{coredef}
Let $\mc{A}\in \mathbb{C}^{\textbf{N}(n) \times \textbf{N}(n) }$. A tensor $\mc{X}\in \mathbb{C}^{\textbf{N}(n) \times \textbf{N}(n)}$ satisfying
\begin{center}
  \textup{(3)} $(\mc{A}\n\mc{X})^*=\mc{A}\n\mc{X};$  \textup{(6)} $\mc{X}\n\mc{A}^2=\mc{A};$ \textup{(7)} $\mc{A}\n\mc{X}^2=\mc{X}$,
\end{center}
  is called core inverse of $\mc{A}$.
\end{definition}
The core inverse is unique whenever it exists and denoted by $\mc{A}^{\core}$. If a tensor $\mc{A}$ is core tensors then its core inverse always exist and we call the tensor $\mc{A}$, is core invertible. Further, a tensor $\mc{A}\in \mathbb{C}^{\textbf{N}(n) \times \textbf{N}(n) }$ is called EP \cite{sahoo2019} if $\mc{A}\n\mc{A}^\dagger=\mc{A}^\dagger\n\mc{A}$.

Let $S=\{1,2,3,4,5,6,7\}$. If a tensor $\mc{X}$ satisfies equation (i), where $i\in S$, then $\mc{X}$ is called $\{i\}$-inverse of $\mc{A}$. The set of $\{i\}$-inverse of the tensor $\mc{A}$ is denoted by $\mc{A}\{i\}$. For example, $\mc{A}\{1\}$, $\mc{A}\{1,2\}$ and $\mc{A}\{1,3\}$ respectively for set of generalized inverse, generalized reflexive inverse of $\mc{A}$,  and $\{1,3\}$ inverse of $\mc{A}.$  We use the notation $\mc{A}^{(i)}$, is a fixed element of $\mc{A}\{i\}$. For example, $\mc{A}^{(1)}$, $\mc{A}^{(1,3)}$ respectively one element of $\mc{A}\{1\}$ and $\mc{A}\{1,3\}$. The range and null space of a tensor  $\mc{A}\in \mathbb{C}^{\textbf{M}(m)\times\textbf{N}(n)}$ was introduced in \cite{Ji18, stan}, and defined as
$$
\rg(\mc{A}) = \left\{\mc{A}\n\mc{X}:~\mc{X}\in\mathbb{C}^{\textbf{N}(n)}\right\}\mbox{ and } \nl(\mc{A})=\left\{\mc{X}:~\mc{A}\n\mc{X}=\mc{O}\in\mathbb{C}^{\textbf{M}(m)}\right\}.
$$
We conclude this subsection with the definition of Kronecker product and one result using the product, which are essential to prove our main results.
\begin{definition}{\cite{sun}}
The Kronecker product of $\mc{A} \in
\mathbb{C}^{\textbf{I}(N)\times\textbf{J}(N)}$
 and $\mc{B} \in \mathbb{C}^{\textbf{K}(M)\times\textbf{L}(M)}$, denoted by
 $\mc{A}\otimes\mc{B} = (a_{i_1...i_Nj_1...j_N}\mc{B}),$ is a `Kr-block tensor' whose $(t_1, t_2)$
  subblock is $(a_{i_1...i_Nj_1...j_N}\mc{B})$ obtained via multiplying all the entries of $\mc{B}$ by
  a constant $a_{i_1...i_Nj_1...j_N},$ where\\
    $t_1=i_N + \displaystyle\sum_{K=1}^{N-1} \left[ (i_K - 1) \displaystyle\prod_{L=K+1}^{N} I_L \right]$ and
   $t_2=j_N + \displaystyle\sum_{K=1}^{N-1} \left[ (j_K - 1) \displaystyle\prod_{L=K+1}^{N} J_L
   \right]$.
\end{definition}
\begin{proposition} [Proposition 1.3, \cite{RD} and Proposition 2.3, \cite{sun}]\label{Krop}
Let $\mc{A},~\mc{B},~\mc{C},\mbox{ and }\mc{D} \in \mathbb{C}^{\textbf{N}(n)\times\textbf{N}(n)}$. Then
\item[(a)] $(\mc{A}\otimes \mc{B})^* = \mc{A}^* \otimes \mc{B}^*$;
\item[(b)] $\mc{A}\otimes(\mc{B}\otimes\mc{C})
 = (\mc{A}\otimes\mc{B})\otimes \mc{C}$;
\item[(c)] $(\mc{A}\otimes\mc{B})\n(\mc{C}\otimes\mc{D})=(\mc{A}\n\mc{C})\otimes(\mc{B}\n\mc{D})$.
\end{proposition}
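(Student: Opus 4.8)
The plan is to establish all three identities by an entrywise comparison: two tensors are equal exactly when all of their entries agree, so it suffices to compute a generic entry of each side and check that they coincide. Throughout I would write a generic entry of the Kronecker block tensor as $(\mc{A}\otimes\mc{B})_{(\mathbf{i},\mathbf{k}),(\mathbf{j},\mathbf{l})} = a_{\mathbf{i}\mathbf{j}}\,b_{\mathbf{k}\mathbf{l}}$, where $\mathbf{i},\mathbf{j}$ are the row and column multi-indices inherited from $\mc{A}$ and $\mathbf{k},\mathbf{l}$ those inherited from $\mc{B}$, with the pair $(\mathbf{i},\mathbf{k})$ encoded by the collapsed block index $t_1$ and $(\mathbf{j},\mathbf{l})$ by $t_2$ as in the definition. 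Every claim then reduces to an identity among products of complex scalars.

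For part (a), I would apply the definition of the conjugate transpose to the block tensor: the $((\mathbf{j},\mathbf{l}),(\mathbf{i},\mathbf{k}))$ entry of $(\mc{A}\otimes\mc{B})^*$ is $\overline{a_{\mathbf{i}\mathbf{j}}\,b_{\mathbf{k}\mathbf{l}}} = \overline{a_{\mathbf{i}\mathbf{j}}}\,\overline{b_{\mathbf{k}\mathbf{l}}}$, which is precisely the $((\mathbf{j},\mathbf{l}),(\mathbf{i},\mathbf{k}))$ entry of $\mc{A}^*\otimes\mc{B}^*$; hence the two tensors agree. Part (b) is equally direct: both $\mc{A}\otimes(\mc{B}\otimes\mc{C})$ and $(\mc{A}\otimes\mc{B})\otimes\mc{C}$ have the triple product $a_{\cdots}\,b_{\cdots}\,c_{\cdots}$ as their generic entry, so the identity reduces to associativity of scalar multiplication in $\mathbb{C}$, and only the relabeling of the nested collapsed block indices needs to be tracked.

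Part (c) is the substantive one. Here I would expand the left-hand side through the definition of the Einstein product, contracting over the entire column block index, equivalently over both inherited multi-indices $\mathbf{j}$ and $\mathbf{l}$ simultaneously:
\begin{equation*}
\big[(\mc{A}\otimes\mc{B})\n(\mc{C}\otimes\mc{D})\big]_{(\mathbf{i},\mathbf{k}),(\mathbf{p},\mathbf{r})} = \sum_{\mathbf{j},\mathbf{l}} a_{\mathbf{i}\mathbf{j}}\,b_{\mathbf{k}\mathbf{l}}\,c_{\mathbf{j}\mathbf{p}}\,d_{\mathbf{l}\mathbf{r}}.
\end{equation*}
The decisive observation is that the index $\mathbf{j}$ contracted between $\mc{A}$ and $\mc{C}$ ranges independently of the index $\mathbf{l}$ contracted between $\mc{B}$ and $\mc{D}$, so the double sum factors as $\big(\sum_{\mathbf{j}} a_{\mathbf{i}\mathbf{j}} c_{\mathbf{j}\mathbf{p}}\big)\big(\sum_{\mathbf{l}} b_{\mathbf{k}\mathbf{l}} d_{\mathbf{l}\mathbf{r}}\big) = (\mc{A}\n\mc{C})_{\mathbf{i}\mathbf{p}}\,(\mc{B}\n\mc{D})_{\mathbf{k}\mathbf{r}}$, which is exactly the generic entry of $(\mc{A}\n\mc{C})\otimes(\mc{B}\n\mc{D})$.

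I expect the main obstacle to be notational rather than conceptual: one must verify that the collapsed block index $t_2$ of the definition is in bijective correspondence with the pair of multi-indices $(\mathbf{j},\mathbf{l})$, and consequently that summation over $t_2$ coincides with the independent double sum over $\mathbf{j}$ and $\mathbf{l}$. Once this bijection of the contracted indices is made explicit, the factorization in part (c) is automatic, and all three assertions follow from elementary properties of complex scalars together with the defining formulas for the conjugate transpose, the Kronecker product, and the Einstein product.
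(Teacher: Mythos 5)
Your proof is correct, and there is nothing to compare it against: the paper states Proposition \ref{Krop} as an imported result from \cite{RD} and \cite{sun} and offers no proof of its own, while your entrywise argument --- reducing (a) and (b) to properties of complex scalars and factoring the Einstein contraction in (c) as $\sum_{\mathbf{j},\mathbf{l}} a_{\mathbf{i}\mathbf{j}}b_{\mathbf{k}\mathbf{l}}c_{\mathbf{j}\mathbf{p}}d_{\mathbf{l}\mathbf{r}} = \bigl(\sum_{\mathbf{j}} a_{\mathbf{i}\mathbf{j}}c_{\mathbf{j}\mathbf{p}}\bigr)\bigl(\sum_{\mathbf{l}} b_{\mathbf{k}\mathbf{l}}d_{\mathbf{l}\mathbf{r}}\bigr)$ --- is precisely the standard proof given in those cited sources. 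You are also right to flag the bijection $t_2\leftrightarrow(\mathbf{j},\mathbf{l})$ as the only real point needing care, since the product in (c) in fact contracts over all $2n$ collapsed column indices of the Kr-block tensor, and your joint double sum handles exactly that.
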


\subsection{Prerequisite results}
Now, we discuss some preliminary results, which will be used in our subsequent sections. Out of which, few known results directly collected from literature. We first recall the result based on the Drazin inverse, as follows.
\begin{lemma}[Theorem 3.13, \cite{RAJ}]\label{lm2.4}
Let $\mc{A}$, $\mc{B}\in\C^{\textbf{N}(n) \times \textbf{N}(n)}$ and ind$(\mc{A}\n\mc{B})=k$. Then
$(\mc{B}\n\mc{A})^D=\mc{B}((\mc{A}\n\mc{B})^D)^2\n\mc{A}$.
\end{lemma}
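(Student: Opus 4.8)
The plan is to guess the answer and verify the three defining relations of the Drazin inverse directly. Write $\mc{C}=\mc{A}\n\mc{B}$ and $\mc{M}=\mc{B}\n\mc{A}$, and take as candidate the tensor $\mc{X}=\mc{B}\n(\mc{C}^D)^2\n\mc{A}$ (the intended right-hand side). Since all tensors lie in $\C^{\textbf{N}(n)\times\textbf{N}(n)}$, the product $\n$ is associative, which I use throughout without comment. The whole argument rests on two elementary ``sliding'' identities for $\mc{C}$, namely $\mc{C}\n(\mc{C}^D)^2=\mc{C}^D=(\mc{C}^D)^2\n\mc{C}$, each of which follows at once by combining the commutativity relation $\mc{C}\n\mc{C}^D=\mc{C}^D\n\mc{C}$ (condition (5) for $\mc{C}$) with $\mc{C}^D\n\mc{C}\n\mc{C}^D=\mc{C}^D$ (condition (2) for $\mc{C}$). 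I also record the power identity $\mc{M}^{j}=\mc{B}\n\mc{C}^{\,j-1}\n\mc{A}$ for $j\ge 1$, obtained by telescoping $\mc{A}\n\mc{B}=\mc{C}$ inside $(\mc{B}\n\mc{A})^{j}$.

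First I would check conditions (5) and (2) for $\mc{X}$. Using $\mc{A}\n\mc{B}=\mc{C}$ and the sliding identities, $\mc{M}\n\mc{X}=\mc{B}\n\mc{C}\n(\mc{C}^D)^2\n\mc{A}=\mc{B}\n\mc{C}^D\n\mc{A}$ and likewise $\mc{X}\n\mc{M}=\mc{B}\n(\mc{C}^D)^2\n\mc{C}\n\mc{A}=\mc{B}\n\mc{C}^D\n\mc{A}$, so the two coincide and condition (5) holds. For condition (2) I compute $\mc{X}\n\mc{M}\n\mc{X}=\mc{X}\n(\mc{M}\n\mc{X})=\mc{B}\n(\mc{C}^D)^2\n\mc{C}\n\mc{C}^D\n\mc{A}$, and since $(\mc{C}^D)^2\n\mc{C}\n\mc{C}^D=\mc{C}^D\n(\mc{C}^D\n\mc{C}\n\mc{C}^D)=(\mc{C}^D)^2$, this collapses back to $\mc{X}$.

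The substantive step is the relation $\mc{M}^{\ell+1}\n\mc{X}=\mc{M}^{\ell}$ of type (1$^k$). From the power identity, $\mc{M}^{\ell+1}\n\mc{X}=\mc{B}\n\mc{C}^{\,\ell+1}\n(\mc{C}^D)^2\n\mc{A}$, and applying the index relation $\mc{C}^{\,j+1}\n\mc{C}^D=\mc{C}^{j}$ (valid for every $j\ge k$, by iterating condition (1$^k$) for $\mc{C}$) twice gives $\mc{C}^{\,\ell+1}\n(\mc{C}^D)^2=\mc{C}^{\,\ell-1}$ as soon as $\ell\ge k+1$; hence $\mc{M}^{\ell+1}\n\mc{X}=\mc{B}\n\mc{C}^{\,\ell-1}\n\mc{A}=\mc{M}^{\ell}$ for all such $\ell$.

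The one point needing care — and the main obstacle — is that the exponent in condition (1$^k$) is tied to the index of the tensor being inverted, whereas here the index of $\mc{M}=\mc{B}\n\mc{A}$ is a priori unknown and need not equal $k=\mathrm{ind}(\mc{A}\n\mc{B})$. I would resolve this by invoking the standard equivalent characterization of the Drazin inverse: a tensor satisfying (2) and (5) together with $\mc{M}^{\ell+1}\n\mc{X}=\mc{M}^{\ell}$ for \emph{some} positive integer $\ell$ is necessarily $\mc{M}^{D}$ (equivalently, $\mc{M}-\mc{M}^{2}\n\mc{X}$ is nilpotent, which also shows $\mathrm{ind}(\mc{M})\le\ell$). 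Taking $\ell=k+1$ from the previous paragraph then yields $\mc{X}=\mc{M}^{D}=(\mc{B}\n\mc{A})^{D}$ by uniqueness of the Drazin inverse, which is exactly the asserted formula $(\mc{B}\n\mc{A})^{D}=\mc{B}\n((\mc{A}\n\mc{B})^{D})^{2}\n\mc{A}$.
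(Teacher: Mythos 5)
Your proof is correct, but note that there is nothing in this paper to compare it against: Lemma \ref{lm2.4} is imported without proof as Theorem 3.13 of \cite{RAJ}, so the paper's ``own proof'' is simply a citation. What you have written is the standard argument for Cline's formula, transplanted to the Einstein-product setting, and every step checks out: the sliding identities $\mc{C}\n(\mc{C}^D)^2=\mc{C}^D=(\mc{C}^D)^2\n\mc{C}$ follow from conditions (2) and (5) exactly as you say; your computations of $\mc{M}\n\mc{X}=\mc{X}\n\mc{M}=\mc{B}\n\mc{C}^D\n\mc{A}$ and $\mc{X}\n\mc{M}\n\mc{X}=\mc{X}$ are right; and the telescoped power identity $\mc{M}^{j}=\mc{B}\n\mc{C}^{\,j-1}\n\mc{A}$ together with $\mc{C}^{k+2}\n(\mc{C}^D)^2=\mc{C}^{k+1}\n\mc{C}^D=\mc{C}^k$ gives $\mc{M}^{\ell+1}\n\mc{X}=\mc{M}^{\ell}$ with $\ell=k+1$. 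Most importantly, you correctly isolated the one genuinely delicate point, which a naive verification would miss: the paper's Definition 2.2 pins the exponent in (1$^k$) to $\mathrm{ind}(\mc{M})$, which is not known to equal $k$. Your appeal to the equivalent ``eventual'' characterization (satisfying (2), (5), and $\mc{M}^{\ell+1}\n\mc{X}=\mc{M}^{\ell}$ for some $\ell$ forces $\mc{X}=\mc{M}^D$) is the standard and legitimate fix; it can be made fully explicit either by your nilpotency observation, $(\mc{M}-\mc{M}^2\n\mc{X})^{\ell}=\mc{M}^{\ell}\n(\mc{I}-\mc{M}\n\mc{X})=\mc{O}$ since $\mc{M}\n\mc{X}$ is idempotent and commutes with $\mc{M}$, or within the paper's own toolkit via Lemma \ref{lm2.7}: writing $\mc{M}^{m}=\mc{M}^{\ell}\n\mc{U}$ with $m=\mathrm{ind}(\mc{M})\le\ell$ and using commutativity, one descends from (1$^{\ell}$) to (1$^{m}$), so $\mc{X}$ satisfies Definition 2.2 verbatim and uniqueness concludes. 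As a pleasant byproduct your argument also shows $\mathrm{ind}(\mc{B}\n\mc{A})\le \mathrm{ind}(\mc{A}\n\mc{B})+1$, which is the expected matrix-theoretic bound.
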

Now we present a few impressive results from \cite{JRPKH} in which the core inverse of a tensor in linked with other types of inverses, i.e., Moore-Penrose inverse, group inverse, and EP.
\begin{lemma}[Theorem 3.3, \cite{JRPKH}]\label{lm2.5}
Let $\mc{A}\in\C^{\textbf{N}(n) \times \textbf{N}(n)}$
be a core tensor. If there exists a tensor $\mc{X}:=\mc{A}^{(1,3)}\in\C^{\textbf{N}(n) \times \textbf{N}(n)}$
such that $\mc{X}\in\mc{A}\{1,3\}$, then $\mc{A}^{\core}=\mc{A}^{\#}\n\mc{A}\n\mc{X}$.
\end{lemma}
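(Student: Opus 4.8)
The plan is to produce the candidate $\mc{Y}:=\mc{A}^{\#}\n\mc{A}\n\mc{X}$ and verify directly that it satisfies the three defining equations (3), (6), (7) of the core inverse in Definition~\ref{coredef}; uniqueness of the core inverse then forces $\mc{Y}=\mc{A}^{\core}$. Because $\mc{A}$ is a core tensor (index one), the group inverse $\mc{A}^{\#}$ exists, and I would first collect the standard group-inverse identities as tools: the commuting relation $\mc{A}\n\mc{A}^{\#}=\mc{A}^{\#}\n\mc{A}$, the absorption $\mc{A}\n\mc{A}^{\#}\n\mc{A}=\mc{A}$, and $\mc{A}^{\#}\n\mc{A}\n\mc{A}^{\#}=\mc{A}^{\#}$ (all immediate from the $k=1$ case of the Drazin definition). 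From the hypothesis $\mc{X}\in\mc{A}\{1,3\}$ I have the two facts $\mc{A}\n\mc{X}\n\mc{A}=\mc{A}$ and $(\mc{A}\n\mc{X})^*=\mc{A}\n\mc{X}$, and I would record the useful consequence that $\mc{A}\n\mc{X}$ is a Hermitian idempotent (idempotency follows from $\mc{A}\n\mc{X}\n\mc{A}\n\mc{X}=\mc{A}\n\mc{X}$).

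The two easy verifications I would dispatch first. For equation (3), I compute $\mc{A}\n\mc{Y}=\mc{A}\n\mc{A}^{\#}\n\mc{A}\n\mc{X}=\mc{A}\n\mc{X}$ using the absorption identity, so $(\mc{A}\n\mc{Y})^*=(\mc{A}\n\mc{X})^*=\mc{A}\n\mc{X}=\mc{A}\n\mc{Y}$ by the $\{3\}$-property. For equation (6), I compute $\mc{Y}\n\mc{A}^2=\mc{A}^{\#}\n(\mc{A}\n\mc{X}\n\mc{A})\n\mc{A}=\mc{A}^{\#}\n\mc{A}\n\mc{A}=\mc{A}$, where the inner $\mc{A}\n\mc{X}\n\mc{A}=\mc{A}$ collapses the middle factor and then $\mc{A}^{\#}\n\mc{A}\n\mc{A}=\mc{A}\n\mc{A}^{\#}\n\mc{A}=\mc{A}$ finishes it via commuting plus absorption.

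The main obstacle, though only a bookkeeping one, is equation (7), since it uses the full interplay of commuting and absorption. Starting from $\mc{A}\n\mc{Y}=\mc{A}\n\mc{X}$ already established, I would write $\mc{A}\n\mc{Y}^2=(\mc{A}\n\mc{Y})\n\mc{Y}=\mc{A}\n\mc{X}\n\mc{A}^{\#}\n\mc{A}\n\mc{X}$, then commute $\mc{A}^{\#}\n\mc{A}=\mc{A}\n\mc{A}^{\#}$ to get $\mc{A}\n\mc{X}\n\mc{A}\n\mc{A}^{\#}\n\mc{X}$, apply $\mc{A}\n\mc{X}\n\mc{A}=\mc{A}$ to reach $\mc{A}\n\mc{A}^{\#}\n\mc{X}$, and commute back to $\mc{A}^{\#}\n\mc{A}\n\mc{X}=\mc{Y}$. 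Having checked (3), (6), and (7), I conclude by invoking the uniqueness of the core inverse stated after Definition~\ref{coredef}, which gives $\mc{A}^{\core}=\mc{Y}=\mc{A}^{\#}\n\mc{A}\n\mc{X}$ and completes the proof.
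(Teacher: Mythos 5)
Your verification is correct, and the first thing to note is that this paper never proves Lemma~\ref{lm2.5} at all: it is imported verbatim as Theorem~3.3 of \cite{JRPKH}, so there is no in-text argument to compare against. Your route --- setting $\mc{Y}=\mc{A}^{\#}\n\mc{A}\n\mc{X}$ and checking the three defining equations of Definition~\ref{coredef} directly, then invoking uniqueness --- is the natural one, and every step survives scrutiny. The group-inverse tools you collect ($\mc{A}\n\mc{A}^{\#}=\mc{A}^{\#}\n\mc{A}$, $\mc{A}\n\mc{A}^{\#}\n\mc{A}=\mc{A}$, $\mc{A}^{\#}\n\mc{A}\n\mc{A}^{\#}=\mc{A}^{\#}$) do follow from the paper's Drazin definition at $k=1$ (conditions $(1^1)$, $(2)$, $(5)$), and each computation is justified by exactly the identity you cite: $\mc{A}\n\mc{Y}=\mc{A}\n\mc{X}$ gives (3) from the $\{3\}$-property of $\mc{X}$; $\mc{Y}\n\mc{A}^2=\mc{A}^{\#}\n(\mc{A}\n\mc{X}\n\mc{A})\n\mc{A}=\mc{A}^{\#}\n\mc{A}^2=\mc{A}$ gives (6); and the chain $\mc{A}\n\mc{Y}^2=\mc{A}\n\mc{X}\n\mc{A}^{\#}\n\mc{A}\n\mc{X}=\mc{A}\n\mc{X}\n\mc{A}\n\mc{A}^{\#}\n\mc{X}=\mc{A}\n\mc{A}^{\#}\n\mc{X}=\mc{A}^{\#}\n\mc{A}\n\mc{X}=\mc{Y}$ gives (7). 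Two minor tightening remarks: the observation that $\mc{A}\n\mc{X}$ is a Hermitian idempotent is never actually used in your verifications, so it can be dropped; and since you verify the full defining triple (3), (6), (7) --- indeed more than the reduced sufficient sets in Lemma~\ref{lm2.11} require --- the conclusion $\mc{Y}=\mc{A}^{\core}$ follows from uniqueness alone, with $\mc{Y}$ itself witnessing existence, so no separate appeal to the existence claim for core tensors is needed.
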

\begin{lemma}[Theorem 3.2, 3.5, \cite{JRPKH}]\label{lm2.6}
Let  $\mc{A}\in\C^{\textbf{N}(n) \times \textbf{N}(n)}$
be a core tensor. Then the following statements are equivalent:
\begin{enumerate}
    \item[(a)] $\mc{A}$ is EP;
    \item[(b)] $\mc{A}^{\core}=\mc{A}^{\dagger}=\mc{A}^{\#}$;
    \item[(c)]  $\mc{A}\n\mc{A}^{\core}=\mc{A}^{\core}\n\mc{A}$.
\end{enumerate}
\end{lemma}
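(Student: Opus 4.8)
The plan is to prove the three statements equivalent through the cycle $(a)\Rightarrow(b)\Rightarrow(c)\Rightarrow(a)$. Throughout, the central tool will be Lemma \ref{lm2.5}: taking the Moore--Penrose inverse $\mc{A}^{\dagger}$ as the particular $\{1,3\}$-inverse (it satisfies equations (1) and (3)), that lemma furnishes the working formula $\mc{A}^{\core}=\mc{A}^{\#}\n\mc{A}\n\mc{A}^{\dagger}$. I will combine this with the uniqueness of each of the three inverses, together with the cancellation identities $\mc{A}\n\mc{A}^{\#}\n\mc{A}=\mc{A}$ and $\mc{A}\n\mc{A}^{\dagger}\n\mc{A}=\mc{A}$.

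For $(a)\Rightarrow(b)$ I would first establish $\mc{A}^{\dagger}=\mc{A}^{\#}$ by verifying that $\mc{A}^{\dagger}$ meets the three defining equations of the group inverse: equation (2) is precisely the Penrose identity $\mc{A}^{\dagger}\n\mc{A}\n\mc{A}^{\dagger}=\mc{A}^{\dagger}$, equation (5) is the EP hypothesis itself, and the remaining equation $\mc{A}^{2}\n\mc{A}^{\dagger}=\mc{A}$ follows from $\mc{A}^{2}\n\mc{A}^{\dagger}=\mc{A}\n(\mc{A}\n\mc{A}^{\dagger})=\mc{A}\n(\mc{A}^{\dagger}\n\mc{A})=\mc{A}$, invoking EP and then $\mc{A}\n\mc{A}^{\dagger}\n\mc{A}=\mc{A}$. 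Uniqueness of the group inverse then gives $\mc{A}^{\dagger}=\mc{A}^{\#}$, and substituting into the working formula collapses it via Penrose (2) to $\mc{A}^{\core}=\mc{A}^{\dagger}\n\mc{A}\n\mc{A}^{\dagger}=\mc{A}^{\dagger}$, so all three agree.

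The step $(b)\Rightarrow(c)$ is immediate, since $\mc{A}^{\core}=\mc{A}^{\#}$ and the group inverse commutes with $\mc{A}$. The heart of the lemma, and what I expect to be the main obstacle, is $(c)\Rightarrow(a)$. My plan is to first extract two auxiliary identities from the working formula: reducing on the left by $\mc{A}\n\mc{A}^{\#}\n\mc{A}=\mc{A}$ gives $\mc{A}\n\mc{A}^{\core}=\mc{A}\n\mc{A}^{\dagger}$, while reducing on the right by $\mc{A}\n\mc{A}^{\dagger}\n\mc{A}=\mc{A}$ gives $\mc{A}^{\core}\n\mc{A}=\mc{A}^{\#}\n\mc{A}=\mc{A}\n\mc{A}^{\#}$. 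Substituting hypothesis (c) into these collapses everything to the single identity $\mc{A}\n\mc{A}^{\dagger}=\mc{A}^{\#}\n\mc{A}$.

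The decisive observation is then that this identity upgrades $\mc{A}^{\#}$ to a Moore--Penrose inverse. Indeed, $\mc{A}^{\#}$ already satisfies Penrose (1) and (2) by the group-inverse axioms; furthermore $\mc{A}\n\mc{A}^{\#}=\mc{A}\n\mc{A}^{\dagger}$ is Hermitian, yielding Penrose (3), and $\mc{A}^{\#}\n\mc{A}=\mc{A}\n\mc{A}^{\dagger}$ is Hermitian as well, yielding Penrose (4). Uniqueness of the Moore--Penrose inverse forces $\mc{A}^{\#}=\mc{A}^{\dagger}$, whence $\mc{A}\n\mc{A}^{\dagger}=\mc{A}\n\mc{A}^{\#}=\mc{A}^{\#}\n\mc{A}=\mc{A}^{\dagger}\n\mc{A}$, which is exactly the EP condition. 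The only delicate part is the careful bookkeeping of the $\{1,3\}$- and group-cancellations leading to $\mc{A}\n\mc{A}^{\dagger}=\mc{A}^{\#}\n\mc{A}$; once that is secured, the Hermitian symmetry of both sides is precisely what closes the loop.
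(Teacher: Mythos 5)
Your proof is correct, but there is nothing in the paper itself to compare it against: Lemma \ref{lm2.6} is a prerequisite imported verbatim from \cite{JRPKH} (Theorems 3.2 and 3.5 there), and this paper states it without proof. Judged on its own merits, your cycle $(a)\Rightarrow(b)\Rightarrow(c)\Rightarrow(a)$ is sound and, pleasingly, uses only material already on record in Section 2: taking $\mc{X}=\mc{A}^{\dagger}$ in Lemma \ref{lm2.5} is legitimate (the Moore--Penrose inverse always exists and satisfies equations (1) and (3)), so the working formula $\mc{A}^{\core}=\mc{A}^{\#}\n\mc{A}\n\mc{A}^{\dagger}$ is available, and the uniqueness of the group and Moore--Penrose inverses does the rest. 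I checked the individual steps: in $(a)\Rightarrow(b)$ the three group-inverse axioms for $\mc{A}^{\dagger}$ all hold, with $\mc{A}^{2}\n\mc{A}^{\dagger}=\mc{A}\n\mc{A}^{\dagger}\n\mc{A}=\mc{A}$ via EP and Penrose (1), and the working formula then collapses to $\mc{A}^{\dagger}\n\mc{A}\n\mc{A}^{\dagger}=\mc{A}^{\dagger}$; in $(c)\Rightarrow(a)$ the two cancellations $\mc{A}\n\mc{A}^{\core}=\mc{A}\n\mc{A}^{\dagger}$ and $\mc{A}^{\core}\n\mc{A}=\mc{A}^{\#}\n\mc{A}=\mc{A}\n\mc{A}^{\#}$ are exact consequences of the formula, so hypothesis (c) indeed reduces to the single identity $\mc{A}\n\mc{A}^{\dagger}=\mc{A}^{\#}\n\mc{A}$, and since $\mc{A}\n\mc{A}^{\dagger}$ is Hermitian this hands $\mc{A}^{\#}$ the Penrose conditions (3) and (4), forcing $\mc{A}^{\#}=\mc{A}^{\dagger}$ by uniqueness and hence EP. This ``upgrade the group inverse to the Moore--Penrose inverse by uniqueness'' device is the standard argument for the corresponding matrix equivalences, transplanted to the Einstein-product setting, and it survives the transplant without incident because the tensor uniqueness statements are already established in the paper; the one hypothesis you should make explicit rather than implicit is that core invertibility of $\mc{A}$ (index one) is what guarantees $\mc{A}^{\#}$ exists before any of the cancellations are performed.
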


Further, the authors of \cite{stan}, proved the following result for the range space.
\begin{lemma}[Lemma 2.2, \cite{stan}]\label{lm2.7}
Let  $\mc{A}\in \mathbb{C}^{\textbf{M}(m)\times \textbf{N}(n)}$,  $\mc{B}\in \mathbb{C}^{\textbf{M}(m)\times \textbf{K}(k)}.$ Then $\mathfrak{R}(\mc{B})\subseteq\mathfrak{R}(\mc{A})$ if and only if there exists  $\mc{U}\in \mathbb{C}^{\textbf{N}(n)\times \textbf{K}(k)}$ such that
$\mc{B}=\mc{A}\n\mc{U}$.
\end{lemma}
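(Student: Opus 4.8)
The plan is to prove the two implications separately, working directly from the definition $\rg(\mc{A})=\{\mc{A}\n\mc{X}:\mc{X}\in\mathbb{C}^{\textbf{N}(n)}\}$, using associativity of the Einstein product (the algebraic rules in Proposition~\ref{Krop} are what I lean on) together with the existence of the Moore--Penrose inverse $\mc{A}^{\dagger}$, which is guaranteed for every tensor.

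For the sufficiency ($\Leftarrow$), suppose $\mc{B}=\mc{A}\n\mc{U}$ for some $\mc{U}\in\mathbb{C}^{\textbf{N}(n)\times\textbf{K}(k)}$. Any element of $\rg(\mc{B})$ has the form $\mc{B}\n\mc{Y}=\mc{A}\n(\mc{U}\n\mc{Y})$ with $\mc{Y}\in\mathbb{C}^{\textbf{K}(k)}$; since $\mc{U}\n\mc{Y}\in\mathbb{C}^{\textbf{N}(n)}$, this element lies in $\rg(\mc{A})$, whence $\rg(\mc{B})\subseteq\rg(\mc{A})$. This direction is a one-line substitution and is not where the difficulty lies.

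For the necessity ($\Rightarrow$), I would take $\mc{U}:=\mc{A}^{\dagger}\n\mc{B}$ as the explicit candidate and show $\mc{A}\n\mc{U}=\mc{A}\n\mc{A}^{\dagger}\n\mc{B}=\mc{B}$. Write $\mc{P}:=\mc{A}\n\mc{A}^{\dagger}$. The Moore--Penrose identity (1) gives $\mc{P}\n\mc{A}=\mc{A}\n\mc{A}^{\dagger}\n\mc{A}=\mc{A}$, so $\mc{P}$ fixes every member of $\rg(\mc{A})$: if $\mc{v}=\mc{A}\n\mc{x}$ then $\mc{P}\n\mc{v}=\mc{A}\n\mc{x}=\mc{v}$. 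It then remains to propagate this from single vectors to the whole tensor $\mc{B}$. For each multi-index $\textbf{j}(k)$ the column slice $\mc{B}\n\mc{E}_{\textbf{j}(k)}$ --- with $\mc{E}_{\textbf{j}(k)}$ the unit tensor supported at $\textbf{j}(k)$ --- belongs to $\rg(\mc{B})\subseteq\rg(\mc{A})$ and is therefore fixed by $\mc{P}$; since $\mc{P}\n\mc{B}$ and $\mc{B}$ agree on every column slice, associativity of $\n$ yields $\mc{P}\n\mc{B}=\mc{B}$, i.e. $\mc{A}\n\mc{U}=\mc{B}$.

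The main obstacle is precisely this last bridging step. The hypothesis $\rg(\mc{B})\subseteq\rg(\mc{A})$ is a statement about sets of order-$m$ tensors, whereas the conclusion asserts a \emph{global} factorization of the order-$(m+k)$ tensor $\mc{B}$. Slicing $\mc{B}$ along its $\textbf{K}(k)$ indices via the unit tensors $\mc{E}_{\textbf{j}(k)}$ converts the set-containment into finitely many vector-membership statements, each resolved by the projector $\mc{P}$, and reassembling the fixed slices recovers $\mc{B}$. An equivalent route, if one prefers to avoid slicing, is to invoke the reshape/matricization isomorphism that turns the Einstein product into ordinary matrix multiplication and then quote the classical matrix fact $\rg(B)\subseteq\rg(A)\iff B=AU$; but the slice argument keeps everything intrinsic to the tensor setting.
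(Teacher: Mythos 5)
Your proof is correct, but there is nothing in the paper to compare it against: Lemma~\ref{lm2.7} is imported verbatim from \cite{stan} (Lemma 2.2 there) and the paper supplies no proof of its own. Taken on its own terms, your argument is complete and uses only facts the paper already has on record. The $\Leftarrow$ direction is indeed a one-line substitution, $\mc{B}\n\mc{Y}=\mc{A}\n(\mc{U}\n\mc{Y})$. For $\Rightarrow$, the choice $\mc{U}=\mc{A}^{\dg}\m\mc{B}$ is the standard one, and you correctly identify the only nontrivial point: the hypothesis $\rg(\mc{B})\subseteq\rg(\mc{A})$ is a containment of sets of order-$m$ tensors, while the conclusion is a global factorization of the order-$(m+k)$ tensor $\mc{B}$. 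Your slicing device handles this cleanly --- each slice $\mc{B}*_k\mc{E}_{\textbf{j}(k)}$ lies in $\rg(\mc{B})\subseteq\rg(\mc{A})$, hence is fixed by $\mc{P}=\mc{A}\n\mc{A}^{\dg}$ via the defining identity $\mc{A}\n\mc{A}^{\dg}\m\mc{A}=\mc{A}$, and since $(\mc{P}\m\mc{B})*_k\mc{E}_{\textbf{j}(k)}=\mc{P}\m(\mc{B}*_k\mc{E}_{\textbf{j}(k)})$ by associativity of the Einstein product, agreement on all slices gives entrywise equality $\mc{P}\m\mc{B}=\mc{B}$. This is essentially equivalent to your alternative route through matricization (which is how such statements are usually dispatched in the tensor literature), but it has the advantage you claim: it stays intrinsic and never invokes the reshape isomorphism, which this paper does not set up. One cosmetic remark: the contractions of $\mc{P}$ against $\mc{B}$ and of $\mc{B}$ against $\mc{E}_{\textbf{j}(k)}$ run over $m$ and $k$ indices respectively, so strictly they are $*_m$ and $*_k$ rather than $\n$; the paper itself is equally loose with this notation, so this is not a gap, merely something to tidy if the proof were added to the text.
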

Using Lemma \ref{lm2.7}, we can easily show the following result.
\begin{lemma}\label{lm2.8}
Let  $\mc{A}\in \mathbb{C}^{\textbf{N}(n)\times \textbf{N}(n)}$. Then $\mc{A}^{\#}$ exists if and only if $\mc{A}=\mc{A}^2\n\mc{X}=\mc{Y}\n\mc{A}^2$ for some $\mc{X}$, $\mc{Y}\in\C^{\textbf{M}(m)\times \textbf{N}(n)}$.
\end{lemma}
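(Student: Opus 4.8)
The plan is to prove the two implications separately, drawing the converse out of Lemma \ref{lm2.7} as the paper's phrasing suggests.

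\emph{Necessity.} Assume $\mc{A}^{\#}$ exists. Specialising the Drazin axioms to $k=1$, relation $(1^{1})$ reads $\mc{A}^{2}\n\mc{A}^{\#}=\mc{A}$, so $\mc{X}=\mc{A}^{\#}$ already realises the first equation. For the second I would push $\mc{A}^{\#}$ through $\mc{A}$ using the commutativity axiom $(5)$: since $\mc{A}^{\#}\n\mc{A}=\mc{A}\n\mc{A}^{\#}$, I get $\mc{A}^{\#}\n\mc{A}^{2}=\mc{A}\n\mc{A}^{\#}\n\mc{A}=\mc{A}^{2}\n\mc{A}^{\#}=\mc{A}$, so $\mc{Y}=\mc{A}^{\#}$ works. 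Hence the single choice $\mc{X}=\mc{Y}=\mc{A}^{\#}$ witnesses the right-hand side.

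\emph{Sufficiency.} The idea is to translate solvability into range equality. Viewing $\mc{A}=\mc{A}^{2}\n\mc{X}$ through Lemma \ref{lm2.7}, with $\mc{A}^{2}$ playing the role of the spanning tensor, gives $\rg(\mc{A})\subseteq\rg(\mc{A}^{2})$; the reverse inclusion is automatic because $\mc{A}^{2}\n\mc{Z}=\mc{A}\n(\mc{A}\n\mc{Z})\in\rg(\mc{A})$, so $\rg(\mc{A})=\rg(\mc{A}^{2})$. By the definition of the index this says $\mathrm{ind}(\mc{A})=1$, i.e. $\mc{A}$ is a group tensor, and therefore $\mc{A}^{\#}$ exists. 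The companion hypothesis $\mc{A}=\mc{Y}\n\mc{A}^{2}$ is the dual statement: taking conjugate transposes turns it into $\mc{A}^{*}=(\mc{A}^{*})^{2}\n\mc{Y}^{*}$, whence $\rg(\mc{A}^{*})=\rg((\mc{A}^{*})^{2})$, i.e. $\nl(\mc{A})=\nl(\mc{A}^{2})$; this reconfirms index one from the null-space side and keeps the characterisation symmetric.

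The argument is short, and the only place needing care is the sufficiency step, namely matching the equation $\mc{A}=\mc{A}^{2}\n\mc{X}$ to the inclusion $\rg(\mc{A})\subseteq\rg(\mc{A}^{2})$ with the correct assignment of roles in Lemma \ref{lm2.7}; once that inclusion (and its automatic reverse) is in hand, index one and hence group invertibility follow at once. If one prefers an explicit, fully self-contained witness rather than invoking the index, I would instead set $\mc{E}=\mc{Y}\n\mc{A}=\mc{A}\n\mc{X}$ (the two agree because $\mc{Y}\n\mc{A}=\mc{Y}\n\mc{A}^{2}\n\mc{X}=\mc{A}\n\mc{X}$), check that $\mc{A}\n\mc{E}=\mc{E}\n\mc{A}=\mc{A}$ and $\mc{E}^{2}=\mc{E}$, and then verify directly that $\mc{Y}\n\mc{A}\n\mc{X}$ satisfies axioms $(1^{1})$, $(2)$ and $(5)$, so that $\mc{A}^{\#}=\mc{Y}\n\mc{A}\n\mc{X}$.
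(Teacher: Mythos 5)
Your proof is correct and takes essentially the route the paper intends: the paper presents Lemma \ref{lm2.8} as an easy consequence of Lemma \ref{lm2.7}, and your sufficiency argument (reading $\mc{A}=\mc{A}^2\n\mc{X}$ via Lemma \ref{lm2.7} as $\rg(\mc{A})\subseteq\rg(\mc{A}^2)$, the reverse inclusion being automatic, hence $\mathrm{ind}(\mc{A})=1$ and group invertibility) is exactly that, while the necessity direction with $\mc{X}=\mc{Y}=\mc{A}^{\#}$ is the same specialisation of the Drazin axioms. Your optional explicit witness $\mc{A}^{\#}=\mc{Y}\n\mc{A}\n\mc{X}$ is moreover precisely the verification the paper itself carries out in Corollary \ref{cor2.9}.
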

\begin{corollary}\label{cor2.9}
Let  $\mc{A}\in \mathbb{C}^{\textbf{N}(n)\times \textbf{N}(n)}$ and ind$(\mc{A})=1$. Then $\mc{A}^{\#}=\mc{Y}\n\mc{A}\n\mc{X}=\mc{A}\n\mc{X}^2=\mc{Y}^2\n\mc{A}$ for some $\mc{X}$, $\mc{Y}\in\C^{\textbf{M}(m)\times \textbf{N}(n)}$.
\end{corollary}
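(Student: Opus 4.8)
The plan is to invoke Lemma~\ref{lm2.8} to produce the two tensors and then collapse each of the three claimed expressions to the group inverse using only the defining identities of $\mc{A}^{\#}$ together with associativity of the Einstein product. Since $\operatorname{ind}(\mc{A})=1$, the group inverse $\mc{A}^{\#}$ exists, and Lemma~\ref{lm2.8} supplies tensors $\mc{X},\mc{Y}$ with $\mc{A}=\mc{A}^2\n\mc{X}=\mc{Y}\n\mc{A}^2$; these are precisely the $\mc{X},\mc{Y}$ for which I would verify the three representations. Throughout I would use the standard group-inverse relations $\mc{A}^2\n\mc{A}^{\#}=\mc{A}^{\#}\n\mc{A}^2=\mc{A}$, the idempotent-like identity $\mc{A}^{\#}\n\mc{A}\n\mc{A}^{\#}=\mc{A}^{\#}$, and the commutativity $\mc{A}\n\mc{A}^{\#}=\mc{A}^{\#}\n\mc{A}$, all immediate from the $k=1$ case of the Drazin definition.

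The key reduction I would isolate first is the single chain
\[
\mc{A}\n\mc{X}=\mc{A}^{\#}\n\mc{A}^2\n\mc{X}=\mc{A}^{\#}\n\mc{A}=\mc{A}\n\mc{A}^{\#}=\mc{Y}\n\mc{A}^2\n\mc{A}^{\#}=\mc{Y}\n\mc{A},
\]
where the first equality uses $\mc{A}^{\#}\n\mc{A}^2=\mc{A}$, the second uses $\mc{A}^2\n\mc{X}=\mc{A}$, the middle one is commutativity, and the last two use $\mc{A}^2\n\mc{A}^{\#}=\mc{A}$ and $\mc{Y}\n\mc{A}^2=\mc{A}$. Thus both mixed products $\mc{A}\n\mc{X}$ and $\mc{Y}\n\mc{A}$ collapse to the single tensor $\mc{A}\n\mc{A}^{\#}$.

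With this in hand each representation follows by one substitution. For the middle form I would write $\mc{A}\n\mc{X}^2=(\mc{A}\n\mc{X})\n\mc{X}=(\mc{A}^{\#}\n\mc{A})\n\mc{X}=\mc{A}^{\#}\n(\mc{A}\n\mc{X})=\mc{A}^{\#}\n\mc{A}\n\mc{A}^{\#}=\mc{A}^{\#}$. Symmetrically $\mc{Y}^2\n\mc{A}=\mc{Y}\n(\mc{Y}\n\mc{A})=\mc{Y}\n(\mc{A}\n\mc{A}^{\#})=(\mc{Y}\n\mc{A})\n\mc{A}^{\#}=(\mc{A}\n\mc{A}^{\#})\n\mc{A}^{\#}=\mc{A}^{\#}$, the last step again using commutativity and $\mc{A}^{\#}\n\mc{A}\n\mc{A}^{\#}=\mc{A}^{\#}$. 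Finally $\mc{Y}\n\mc{A}\n\mc{X}=\mc{Y}\n(\mc{A}\n\mc{X})=\mc{Y}\n(\mc{A}\n\mc{A}^{\#})=(\mc{Y}\n\mc{A})\n\mc{A}^{\#}=\mc{A}^{\#}\n\mc{A}\n\mc{A}^{\#}=\mc{A}^{\#}$, which finishes the proof.

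I do not anticipate a genuine obstacle: the statement is essentially bookkeeping once the reduction $\mc{A}\n\mc{X}=\mc{A}\n\mc{A}^{\#}=\mc{Y}\n\mc{A}$ is extracted. The only points that demand care are applying the group-inverse identities on the correct side and regrouping the triple products via associativity of $\n$; these are exactly what drive the three collapses to $\mc{A}^{\#}$.
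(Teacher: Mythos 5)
Your proposal is correct: the identities you use ($\mc{A}^{\#}\n\mc{A}^2=\mc{A}^2\n\mc{A}^{\#}=\mc{A}$, $\mc{A}^{\#}\n\mc{A}\n\mc{A}^{\#}=\mc{A}^{\#}$, $\mc{A}\n\mc{A}^{\#}=\mc{A}^{\#}\n\mc{A}$) are all valid consequences of the Drazin definition at $k=1$, your key collapse $\mc{A}\n\mc{X}=\mc{A}\n\mc{A}^{\#}=\mc{Y}\n\mc{A}$ checks out line by line, and the three substitutions that follow are sound. However, your route differs from the paper's in a meaningful way. The paper never computes with $\mc{A}^{\#}$ at all: it first shows, using only the factorizations $\mc{A}=\mc{A}^2\n\mc{X}=\mc{Y}\n\mc{A}^2$ from Lemma~\ref{lm2.8}, that the three expressions $\mc{Y}\n\mc{A}\n\mc{X}$, $\mc{A}\n\mc{X}^2$, $\mc{Y}^2\n\mc{A}$ coincide, and then verifies from scratch that $\mc{Z}=\mc{Y}\n\mc{A}\n\mc{X}$ satisfies the three group-inverse equations ($\mc{A}\n\mc{Z}\n\mc{A}=\mc{A}$, $\mc{Z}\n\mc{A}\n\mc{Z}=\mc{Z}$, and commutation), concluding $\mc{Z}=\mc{A}^{\#}$ by uniqueness. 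You instead presuppose the standard identities of $\mc{A}^{\#}$ and show each expression equals $\mc{A}^{\#}$ directly. What each buys: your argument is shorter, avoids the axiom-by-axiom verification (including the commutation check, whose displayed computation in the paper is in fact garbled --- it circles back to $\mc{Z}\n\mc{A}$ rather than reaching $\mc{A}\n\mc{Z}$), and isolates the reusable fact that $\mc{A}\n\mc{X}$ and $\mc{Y}\n\mc{A}$ both equal the projector $\mc{A}\n\mc{A}^{\#}$; the paper's argument is more self-contained, in that it only consumes Lemma~\ref{lm2.8} plus uniqueness of the group inverse and would survive even if one had not yet developed the $k=1$ Drazin identities. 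One small point of hygiene in your write-up: you should state explicitly (as the paper implicitly does) that uniqueness of the group inverse is what licenses writing ``the'' tensor $\mc{A}^{\#}$ whose identities you invoke, but this is cosmetic, not a gap.
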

\begin{proof}
Since $\mc{A}^{\#}$ exists, by Lemma \ref{lm2.8}, we have $\mc{Y}\n\mc{A}\n\mc{X}=\mc{Y}\n\mc{A}^2\n\mc{X}\n\mc{X}=\mc{A}\n\mc{X}^2$ and $\mc{Y}\n\mc{A}\n\mc{X}=\mc{Y}\n\mc{Y}\n\mc{A}^2\n\mc{X}=\mc{Y}^2\n\mc{A}$. To claim the result, it is enough to show $\mc{Z}=\mc{Y}\n\mc{A}\n\mc{X}$ is the group inverse of $\mc{A}$. Now
\begin{eqnarray*}
\mc{A}\n\mc{Z}\n\mc{A}&=& \mc{A}\n\mc{Y}\n\mc{A}\n\mc{X}\n\mc{A}=\mc{A}\n\mc{Y}\n\mc{A}^2\n\mc{X}^2\n\mc{A}={A}^2\n\mc{X}\n\mc{X}\n\mc{A}\\
&=&{Y}\n\mc{A}^2\n\mc{X}\n\mc{A}={Y}\n\mc{A}^2=\mc{A},
\end{eqnarray*}
\begin{eqnarray*}
\mc{Z}\n\mc{A}\n\mc{Z}&=&\mc{Y}\n\mc{A}\n\mc{X}\n\mc{A}\n\mc{Y}\n\mc{A}\n\mc{X}=\mc{Y}\n\mc{A}\n\mc{X}\n\mc{A}\n\mc{Y}\n\mc{A}^2\n\mc{X}^2\\
&=&\mc{Y}\n\mc{A}\n\mc{X}\n\mc{A}^2\n\mc{X}^2=\mc{Y}\n\mc{A}\n\mc{X}\n\mc{A}\n\mc{X}=\mc{Y}^2\n\mc{A}^2\n\mc{X}\n\mc{A}\n\mc{X}\\
&=&\mc{Y}^2\n\mc{A}^2\n\mc{X}=\mc{Y}\n\mc{A}\n\mc{X}=\mc{Z},\mbox{ and }
\end{eqnarray*}
\begin{eqnarray*}
\mc{Z}\n\mc{A}&=&\mc{Y}\n\mc{A}\n\mc{X}\n\mc{A}=\mc{Y}^2\n\mc{A}^2=\mc{Y}^2\n\mc{A}^2\n\mc{X}\n\mc{A}=\mc{Y}\n\mc{A}\n\mc{X}\n\mc{A}=\mc{Z}\n\mc{A}.
\end{eqnarray*}
Hence $\mc{Z}$ is the group inverse of $\mc{A}$.
\end{proof}
If $\mc{X}$ is $\{1,3\}$-inverse of $\mc{A}$, then $\mc{A}=\mc{A}\n\mc{X}\n\mc{A}=\mc{X}^*\n\mc{A}^*\n\mc{A}$. Conversely, if $\mc{A}=\mc{X}^*\n\mc{A}^*\n\mc{A}$ for some $\mc{X}$, then we can easily get
\begin{equation*}
 (\mc{A}\n\mc{X})^*=((\mc{A}\n\mc{X})^*\n\mc{A}\n\mc{X})^*=(\mc{A}\n\mc{X})^*\n\mc{A}\n\mc{X}=\mc{A}\n\mc{X}, \mbox{ and }
\end{equation*}
$\mc{A}\n\mc{X}\n\mc{A}=(\mc{A}\n\mc{X})^*\n\mc{A}\n\mc{X}\n\mc{A}\n\mc{A}=(\mc{A}\n\mc{X})^*\n(\mc{A}\n\mc{X})^*\n\mc{A}=(\mc{A}\n\mc{X})^*\n\mc{A}=\mc{A}$. Therefore, we obtain the following result.
\begin{lemma}\label{lm2.10}
Let $\mc{A}\in \mathbb{C}^{\textbf{M}(m)\times \textbf{N}(n)}$ and $\mc{X}\in \mathbb{C}^{\textbf{N}(n)\times \textbf{M}(m)}$. Then $\mc{X}$ is $\{1,3\}$-inverse of $\mc{A}$ if and only if $\mc{A}=\mc{X}^*\n\mc{A}^*\n\mc{A}$.
\end{lemma}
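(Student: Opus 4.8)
The plan is to prove the two implications of the biconditional separately, relying only on the defining equations \textup{(1)} $\mc{A}\n\mc{X}\n\mc{A}=\mc{A}$ and \textup{(3)} $(\mc{A}\n\mc{X})^*=\mc{A}\n\mc{X}$ of a $\{1,3\}$-inverse, together with the order-reversing rule $(\mc{A}\n\mc{X})^*=\mc{X}^*\n\mc{A}^*$ for the conjugate transpose under the Einstein product.

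For the forward direction, suppose $\mc{X}\in\mc{A}\{1,3\}$. Starting from \textup{(1)} I would write $\mc{A}=\mc{A}\n\mc{X}\n\mc{A}$, replace the factor $\mc{A}\n\mc{X}$ by its conjugate transpose using the Hermitian condition \textup{(3)}, and then expand $(\mc{A}\n\mc{X})^*=\mc{X}^*\n\mc{A}^*$ to reach $\mc{A}=\mc{X}^*\n\mc{A}^*\n\mc{A}$. This direction is essentially immediate.

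For the converse, assume $\mc{A}=\mc{X}^*\n\mc{A}^*\n\mc{A}$. The key step is to read this as $\mc{A}=(\mc{A}\n\mc{X})^*\n\mc{A}$; writing $\mc{P}:=\mc{A}\n\mc{X}$, this says $\mc{P}^*\n\mc{A}=\mc{A}$. The heart of the argument is to deduce that $\mc{P}$ is Hermitian, i.e.\ condition \textup{(3)}: multiplying $\mc{P}^*\n\mc{A}=\mc{A}$ on the right by $\mc{X}$ gives $\mc{P}^*\n\mc{P}=\mc{P}$, and since $\mc{P}^*\n\mc{P}$ is automatically Hermitian, taking conjugate transpose yields $\mc{P}=\mc{P}^*\n\mc{P}=(\mc{P}^*\n\mc{P})^*=\mc{P}^*$. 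Condition \textup{(1)} then follows at once, since by $\mc{P}=\mc{P}^*$ we get $\mc{A}\n\mc{X}\n\mc{A}=\mc{P}\n\mc{A}=\mc{P}^*\n\mc{A}=\mc{A}$.

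I expect the only genuine obstacle to be the converse, specifically the observation that the single equation $\mc{A}=\mc{X}^*\n\mc{A}^*\n\mc{A}$ already forces $\mc{A}\n\mc{X}$ to be Hermitian. Once this Hermitian-square trick is in hand (reduce to $\mc{P}^*\n\mc{P}=\mc{P}$ and use that a Hermitian square equals its own adjoint), both conditions \textup{(1)} and \textup{(3)} drop out; the forward direction and all remaining manipulations are routine applications of the order-reversing property of the conjugate transpose.
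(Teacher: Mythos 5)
Your proposal is correct and follows essentially the same route as the paper: the forward direction is the same one-line substitution of $(\mc{A}\n\mc{X})^*=\mc{A}\n\mc{X}$ into $\mc{A}=\mc{A}\n\mc{X}\n\mc{A}$, and for the converse the paper likewise multiplies $\mc{A}=(\mc{A}\n\mc{X})^*\n\mc{A}$ on the right by $\mc{X}$ to get $(\mc{A}\n\mc{X})^*\n\mc{A}\n\mc{X}=\mc{A}\n\mc{X}$ and then takes the conjugate transpose of this Hermitian square, which is exactly your trick with $\mc{P}=\mc{A}\n\mc{X}$. Condition (1) then follows in both arguments from $\mc{P}=\mc{P}^*$ and $\mc{P}^*\n\mc{A}=\mc{A}$ (your version is in fact a slightly cleaner write-up of the paper's final chain, which contains a typographical slip).
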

Now we recall a characterization of the core-tensor, as follows.
\begin{lemma}[Proposition 3.1 and Corollary 3.7, \cite{JRPKH}]\label{lm2.11}
Let $\mc{A}\in\C^{\textbf{N}(n)\times \textbf{N}(n)}$ be a core tensor. If a tensor $\mc{X}\in\C^{\textbf{N}(n)\times \textbf{N}(n)}$ satisfying any one of the condition
\begin{enumerate}
    \item[(a)] $\mc{A}\n\mc{X}\n\mc{A}=\mc{A}$, $(\mc{A}\n\mc{X})^*=\mc{A}\n\mc{X}$, and $\mc{A}\n\mc{X}^2=\mc{X}$,
    \item[(b)] $\mc{X}\n\mc{A}\n\mc{X}=\mc{X}$, $(\mc{A}\n\mc{X})^*=\mc{A}\n\mc{X}$, and $\mc{X}\n\mc{A}^2=\mc{A}$,
\end{enumerate}
then $\mc{X}$ is the core inverse of $\mc{A}$.
\end{lemma}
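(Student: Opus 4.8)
The plan is to reduce both cases to the single identity $\mc{A}^{\core}=\mc{A}^{\#}\n\mc{A}\n\mc{X}$ supplied by Lemma \ref{lm2.5}, which applies as soon as $\mc{X}$ is a $\{1,3\}$-inverse of $\mc{A}$. Since $\mc{A}$ is a core tensor it is group invertible, so $\mc{A}^{\#}$ exists; throughout I would use the group-inverse identities $\mc{A}\n\mc{A}^{\#}=\mc{A}^{\#}\n\mc{A}$ and $\mc{A}^2\n\mc{A}^{\#}=\mc{A}^{\#}\n\mc{A}^2=\mc{A}$. In each case the condition $(\mc{A}\n\mc{X})^*=\mc{A}\n\mc{X}$ already supplies property $(3)$, so $\mc{X}\in\mc{A}\{1,3\}$ once property $(1)$, namely $\mc{A}\n\mc{X}\n\mc{A}=\mc{A}$, is established. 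Lemma \ref{lm2.5} then yields $\mc{A}^{\core}=\mc{A}^{\#}\n\mc{A}\n\mc{X}$, and it only remains to show that the right-hand side collapses to $\mc{X}$; uniqueness of the core inverse then finishes the argument.

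For case (a), property $(1)$ is assumed outright, so $\mc{X}\in\mc{A}\{1,3\}$ immediately and Lemma \ref{lm2.5} applies. To simplify $\mc{A}^{\#}\n\mc{A}\n\mc{X}$ I would substitute the hypothesis $\mc{X}=\mc{A}\n\mc{X}^2$ (property $(7)$) and then use $\mc{A}^{\#}\n\mc{A}^2=\mc{A}$:
\begin{equation*}
\mc{A}^{\#}\n\mc{A}\n\mc{X}=\mc{A}^{\#}\n\mc{A}^2\n\mc{X}^2=\mc{A}\n\mc{X}^2=\mc{X},
\end{equation*}
so that $\mc{X}=\mc{A}^{\core}$, as required.

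For case (b) the main work is to recover property $(1)$, and this is the step I expect to be the real obstacle, since $\mc{A}\n\mc{X}\n\mc{A}=\mc{A}$ is not among the hypotheses. The key observation is that the given identity $\mc{X}\n\mc{A}^2=\mc{A}$ (property $(6)$), multiplied on the right by $\mc{A}^{\#}$ and simplified via $\mc{A}^2\n\mc{A}^{\#}=\mc{A}$, forces $\mc{X}\n\mc{A}=\mc{A}\n\mc{A}^{\#}=\mc{A}^{\#}\n\mc{A}$; that is, $\mc{X}\n\mc{A}$ coincides with the canonical projection $\mc{A}\n\mc{A}^{\#}$. Multiplying this on the left by $\mc{A}$ gives $\mc{A}\n\mc{X}\n\mc{A}=\mc{A}^2\n\mc{A}^{\#}=\mc{A}$, which is exactly property $(1)$; combined with property $(3)$ this places $\mc{X}$ in $\mc{A}\{1,3\}$. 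Lemma \ref{lm2.5} then gives $\mc{A}^{\core}=\mc{A}^{\#}\n\mc{A}\n\mc{X}$, and replacing $\mc{A}^{\#}\n\mc{A}$ by $\mc{X}\n\mc{A}$ and invoking property $(2)$, $\mc{X}\n\mc{A}\n\mc{X}=\mc{X}$, yields $\mc{A}^{\#}\n\mc{A}\n\mc{X}=\mc{X}\n\mc{A}\n\mc{X}=\mc{X}$, so again $\mc{X}=\mc{A}^{\core}$. Everything outside the identity $\mc{X}\n\mc{A}=\mc{A}\n\mc{A}^{\#}$ is routine substitution.
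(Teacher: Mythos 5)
Your proof is correct, but note that the paper itself never proves this lemma: it is imported verbatim from \cite{JRPKH} (Proposition 3.1 and Corollary 3.7), so there is no in-paper argument to match, and your proposal is in effect a new self-contained derivation from the paper's own toolkit. Your reduction works: in both cases $(\mc{A}\n\mc{X})^*=\mc{A}\n\mc{X}$ gives property $(3)$, and once $\mc{A}\n\mc{X}\n\mc{A}=\mc{A}$ is available, Lemma \ref{lm2.5} applies (its hypotheses are met, since a core tensor is group invertible and core invertible, and the core inverse is unique). In case (a) the computation $\mc{A}^{\#}\n\mc{A}\n\mc{X}=\mc{A}^{\#}\n\mc{A}^2\n\mc{X}^2=\mc{A}\n\mc{X}^2=\mc{X}$ is valid, and in case (b) your pivot identity is sound: right-multiplying $\mc{X}\n\mc{A}^2=\mc{A}$ by $\mc{A}^{\#}$ and using $\mc{A}^2\n\mc{A}^{\#}=\mc{A}$ yields $\mc{X}\n\mc{A}=\mc{A}\n\mc{A}^{\#}$, whence $\mc{A}\n\mc{X}\n\mc{A}=\mc{A}^2\n\mc{A}^{\#}=\mc{A}$, and then $\mc{A}^{\#}\n\mc{A}\n\mc{X}=\mc{X}\n\mc{A}\n\mc{X}=\mc{X}$ by property $(2)$. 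What this route buys is that you never have to verify the missing defining equation directly --- property $(6)$ in case (a), property $(7)$ in case (b) --- which is the awkward part of a direct verification against Definition \ref{coredef}; instead, uniqueness of the core inverse delivers those equations for free once $\mc{X}=\mc{A}^{\#}\n\mc{A}\n\mc{X}=\mc{A}^{\core}$ is established. This is a clean and arguably more economical argument than a hands-on check of the three defining identities.
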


Next, we discuss a few results based on commutative of two tensors.
\begin{lemma}[Theorem 3.11 (a) when $k=1$, \cite{RAJ}]\label{lm2.12}
Let $\mc{A},~\mc{B}\in\C^{\textbf{N}(n)\times \textbf{N}(n)}$ and $ind(\mc{A})=ind(\mc{B})=1$. If $\mc{A}\n\mc{B}=\mc{B}\n\mc{A}$, then  $\mc{A}^{\#}\n\mc{B}=\mc{B}\n\mc{A}^{\#}$ and $\mc{A}\n\mc{B}^{\#}=\mc{B}^{\#}\n\mc{A}$. Further, $(\mc{A}\n\mc{B})^{\#}=\mc{B}^{\#}\n\mc{A}^{\#}$.
\end{lemma}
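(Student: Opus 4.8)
The plan is to establish the two commutation identities first, upgrade them to full pairwise commutativity of $\mc{A},\mc{B},\mc{A}^{\#},\mc{B}^{\#}$, and only then verify the defining equations of the group inverse for the candidate $\mc{B}^{\#}\n\mc{A}^{\#}$.

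First I would prove $\mc{A}^{\#}\n\mc{B}=\mc{B}\n\mc{A}^{\#}$. Since $\mathrm{ind}(\mc{A})=1$, the group inverse coincides with the Drazin inverse, and under the reshaping isomorphism that carries the Einstein product on $\C^{\textbf{N}(n)\times\textbf{N}(n)}$ to ordinary matrix multiplication, $\mc{A}^{\#}$ corresponds to the matrix group inverse, which is expressible as a $\n$-polynomial in $\mc{A}$. Consequently $\mc{A}^{\#}$ lies in the double commutant of $\mc{A}$, so any tensor that commutes with $\mc{A}$ also commutes with $\mc{A}^{\#}$; applying this to $\mc{B}$ gives $\mc{A}^{\#}\n\mc{B}=\mc{B}\n\mc{A}^{\#}$. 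The symmetric argument, using $\mathrm{ind}(\mc{B})=1$ together with $\mc{B}\n\mc{A}=\mc{A}\n\mc{B}$, yields $\mc{A}\n\mc{B}^{\#}=\mc{B}^{\#}\n\mc{A}$. If one prefers to avoid the isomorphism, the same two identities can be extracted directly from the defining relations $\mc{A}\n\mc{A}^{\#}=\mc{A}^{\#}\n\mc{A}$ and $\mc{A}^{\#}=(\mc{A}^{\#})^{2}\n\mc{A}=\mc{A}\n(\mc{A}^{\#})^{2}$, which reduce the claim to the double-commutant property of the group inverse.

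Next I would promote these to the pairwise commutativity of $\{\mc{A},\mc{B},\mc{A}^{\#},\mc{B}^{\#}\}$. Having shown that $\mc{A}^{\#}$ commutes with $\mc{B}$, and since $\mathrm{ind}(\mc{B})=1$, the same double-commutant step (now applied to $\mc{B}$) gives $\mc{A}^{\#}\n\mc{B}^{\#}=\mc{B}^{\#}\n\mc{A}^{\#}$. Thus all four tensors commute in every pair under $\n$, and I may freely reorder $\n$-products of them.

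Finally, to obtain $(\mc{A}\n\mc{B})^{\#}=\mc{B}^{\#}\n\mc{A}^{\#}$, I would check that $\mc{X}:=\mc{B}^{\#}\n\mc{A}^{\#}$ satisfies the three characterizing equations of the group inverse of $\mc{A}\n\mc{B}$, namely $(\mc{A}\n\mc{B})\n\mc{X}\n(\mc{A}\n\mc{B})=\mc{A}\n\mc{B}$, $\mc{X}\n(\mc{A}\n\mc{B})\n\mc{X}=\mc{X}$, and $(\mc{A}\n\mc{B})\n\mc{X}=\mc{X}\n(\mc{A}\n\mc{B})$. Using the pairwise commutativity, each equation collapses after reordering to the individual identities for $\mc{A}$ and $\mc{B}$: the first becomes $(\mc{A}\n\mc{A}^{\#}\n\mc{A})\n(\mc{B}\n\mc{B}^{\#}\n\mc{B})=\mc{A}\n\mc{B}$, the second becomes $(\mc{A}^{\#}\n\mc{A}\n\mc{A}^{\#})\n(\mc{B}^{\#}\n\mc{B}\n\mc{B}^{\#})=\mc{A}^{\#}\n\mc{B}^{\#}=\mc{X}$, and both sides of the third reduce to $\mc{A}\n\mc{A}^{\#}\n\mc{B}\n\mc{B}^{\#}$. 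Uniqueness of the group inverse then forces $\mc{X}=(\mc{A}\n\mc{B})^{\#}$. The main obstacle is the very first step: the commutation relation is not a formal consequence of the three defining identities in an arbitrary ring, and genuinely relies on the double-commutant property of the group inverse (equivalently, on its representation as a $\n$-polynomial in $\mc{A}$); once that is secured, the remaining steps are routine bookkeeping with the Einstein product.
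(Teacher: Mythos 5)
Your proof is correct, but there is nothing in the paper to compare it against: the paper states Lemma \ref{lm2.12} as an imported result, cited as Theorem 3.11(a) with $k=1$ of \cite{RAJ}, and gives no proof. What you have produced is therefore a self-contained replacement for a quoted fact rather than a variant of an in-paper argument. Your structure --- first get $\mc{A}^{\#}\n\mc{B}=\mc{B}\n\mc{A}^{\#}$ and $\mc{A}\n\mc{B}^{\#}=\mc{B}^{\#}\n\mc{A}$ from the double-commutant property, upgrade to pairwise commutativity of $\{\mc{A},\mc{B},\mc{A}^{\#},\mc{B}^{\#}\}$, then verify the three group-inverse equations for $\mc{X}=\mc{B}^{\#}\n\mc{A}^{\#}$ and invoke uniqueness --- is sound, and the three verifications collapse exactly as you say (note also that your three axioms are equivalent to the paper's Definition via $\mc{A}\n\mc{X}\n\mc{A}=\mc{A}^{2}\n\mc{X}$ once $\mc{X}\n\mc{A}=\mc{A}\n\mc{X}$ holds, so existence of $(\mc{A}\n\mc{B})^{\#}$ comes for free from exhibiting $\mc{X}$). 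The source \cite{RAJ} proves the general index-$k$ Drazin statement, so your index-one argument is the more elementary specialization.

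One correction to your closing remark: the commutation relation \emph{is} a formal consequence of the three defining identities in an arbitrary associative unital ring (Drazin, 1958), so neither the reshape isomorphism nor the polynomial representation is genuinely needed. Set $\mc{E}=\mc{A}\n\mc{A}^{\#}=\mc{A}^{\#}\n\mc{A}$. From $\mc{A}^{\#}\n\mc{A}^{2}=\mc{A}$ and commutativity one gets $(\mc{I}-\mc{E})\n\mc{A}=\mc{O}=\mc{A}\n(\mc{I}-\mc{E})$, whence $\mc{E}\n\mc{B}\n(\mc{I}-\mc{E})=\mc{A}^{\#}\n\mc{B}\n\mc{A}\n(\mc{I}-\mc{E})=\mc{O}$ and $(\mc{I}-\mc{E})\n\mc{B}\n\mc{E}=(\mc{I}-\mc{E})\n\mc{A}\n\mc{B}\n\mc{A}^{\#}=\mc{O}$, so $\mc{E}\n\mc{B}=\mc{E}\n\mc{B}\n\mc{E}=\mc{B}\n\mc{E}$; then $\mc{A}^{\#}\n\mc{B}=\mc{A}^{\#}\n\mc{B}\n\mc{E}=\mc{A}^{\#}\n\mc{A}\n\mc{B}\n\mc{A}^{\#}=\mc{B}\n\mc{E}\n\mc{A}^{\#}=\mc{B}\n\mc{A}^{\#}$. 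This purely ring-theoretic route is preferable to the isomorphism argument you lead with, since it avoids any appeal to the transfer of the polynomial representation through the reshaping map --- though that route is also valid here, as the reshape map of Brazell et al.\ \cite{BraliNT13} is an algebra isomorphism carrying $\n$ to matrix multiplication. This slip does not affect the correctness of your proof, only the accuracy of the obstacle you identify.
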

\begin{lemma}\label{lm2.13}
Let $\mc{A},~\mc{X}\in \mathbb{C}^{\textbf{N}(n)\times \textbf{N}(n)}$. If $\mc{A}\n\mc{X}=\mc{X}\n\mc{A}$ and $\mc{A}^*\n\mc{X}=\mc{X}\n\mc{A}^*$, then $\mc{A}\n\mc{A}^{(1,3)}\n\mc{X}=\mc{X}\n\mc{A}\n\mc{A}^{(1,3)}$.
\end{lemma}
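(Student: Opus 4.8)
The plan is to reduce the identity to the single claim that the tensor $\mc{P}:=\mc{A}\n\mc{A}^{(1,3)}$ commutes with $\mc{X}$. First I would record the properties of $\mc{P}$ forced by the defining equations of a $\{1,3\}$-inverse: idempotency $\mc{P}\n\mc{P}=\mc{P}$ and the absorption identity $\mc{P}\n\mc{A}=\mc{A}$ both follow from equation (1), while equation (3) gives Hermiticity $\mc{P}^*=\mc{P}$. Conjugate-transposing $\mc{P}\n\mc{A}=\mc{A}$ and using $\mc{P}^*=\mc{P}$ then produces the dual absorption identity $\mc{A}^*\n\mc{P}=\mc{A}^*$; this is the hook through which the second hypothesis will enter.

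Next I would set $\mc{D}:=\mc{P}\n\mc{X}-\mc{X}\n\mc{P}$, the object whose vanishing is exactly the assertion, and establish two one-sided vanishing relations. Using $\mc{A}\n\mc{X}=\mc{X}\n\mc{A}$ together with $\mc{P}\n\mc{A}=\mc{A}$, a direct computation collapses $\mc{D}\n\mc{A}$ to $\mc{A}\n\mc{X}-\mc{X}\n\mc{A}=\mc{O}$. Symmetrically, using $\mc{A}^*\n\mc{X}=\mc{X}\n\mc{A}^*$ together with $\mc{A}^*\n\mc{P}=\mc{A}^*$, the expression $\mc{A}^*\n\mc{D}$ collapses to $\mc{O}$. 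These two identities, $\mc{D}\n\mc{A}=\mc{O}$ and $\mc{A}^*\n\mc{D}=\mc{O}$, encode all the content of the hypotheses.

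The finish is then purely algebraic. Right-multiplying $\mc{D}\n\mc{A}=\mc{O}$ by $\mc{A}^{(1,3)}$ gives $\mc{D}\n\mc{P}=\mc{O}$, and expanding with $\mc{P}\n\mc{P}=\mc{P}$ rearranges this to $\mc{X}\n\mc{P}=\mc{P}\n\mc{X}\n\mc{P}$. Dually, since $\mc{P}=\mc{P}^*=(\mc{A}^{(1,3)})^*\n\mc{A}^*$, left-multiplying $\mc{A}^*\n\mc{D}=\mc{O}$ by $(\mc{A}^{(1,3)})^*$ gives $\mc{P}\n\mc{D}=\mc{O}$, which rearranges to $\mc{P}\n\mc{X}=\mc{P}\n\mc{X}\n\mc{P}$. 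Hence $\mc{P}\n\mc{X}$ and $\mc{X}\n\mc{P}$ both equal the common tensor $\mc{P}\n\mc{X}\n\mc{P}$, so $\mc{D}=\mc{O}$, which is precisely $\mc{A}\n\mc{A}^{(1,3)}\n\mc{X}=\mc{X}\n\mc{A}\n\mc{A}^{(1,3)}$.

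I expect the only real care to be in balancing the two symmetric halves: the first hypothesis governs right-multiplication by $\mc{A}$ and yields $\mc{D}\n\mc{P}=\mc{O}$, while the second governs left-multiplication by $\mc{A}^*$ and yields $\mc{P}\n\mc{D}=\mc{O}$, and it is Hermiticity of $\mc{P}$ that lets both conclusions be phrased through the same sandwich term $\mc{P}\n\mc{X}\n\mc{P}$. Everything else reduces to routine reassociation of Einstein products.
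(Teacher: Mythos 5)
Your proof is correct, and it is essentially the paper's own argument in a tidier package: the paper's two displayed computations establish exactly your two identities $\mc{X}\n\mc{P}=\mc{P}\n\mc{X}\n\mc{P}$ (via the first hypothesis and $\mc{A}=\mc{A}\n\mc{A}^{(1,3)}\n\mc{A}$) and $\mc{P}\n\mc{X}=\mc{P}\n\mc{X}\n\mc{P}$ (via the second hypothesis and Hermiticity of $\mc{P}=\mc{A}\n\mc{A}^{(1,3)}$), then equates them. Your reformulation through the commutator $\mc{D}$ and the annihilation relations $\mc{D}\n\mc{A}=\mc{O}$, $\mc{A}^*\n\mc{D}=\mc{O}$ is a clean repackaging of the same substitution chains, not a different route.
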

\begin{proof}
Let $\mc{A}\n\mc{X}=\mc{X}\n\mc{A}$ and $\mc{A}^*\n\mc{X}=\mc{X}\n\mc{A}^*$. Now using these conditions, we obtain
\begin{equation}\label{eq3}
  \mc{X}\n\mc{A}\n\mc{A}^{(1,3)}=\mc{A}\n\mc{X}\n\mc{A}^{(1,3)} =\mc{A}\mc{A}^{(1,3)}\n\mc{A}\n\mc{X}\n\mc{A}^{(1,3)}=\mc{A}\mc{A}^{(1,3)}\n\n\mc{X}\n\mc{A}\mc{A}^{(1,3)}
\end{equation}
and
\begin{eqnarray}\label{eq4}
\nonumber
\mc{A}\n\mc{A}^{(1,3)}\n\mc{X}&=&\left(\mc{A}^{(1,3)}\right)^*\n\mc{A}^*\n\mc{X}=\left(\mc{A}^{(1,3)}\right)^*\n\mc{X}\n\mc{A}^*\\
\nonumber
&=&\left(\mc{A}^{(1,3)}\right)^*\n\mc{X}\n\left(\mc{A}\n\mc{A}^{(1,3)}\n\mc{A}\right)^*=\left(\mc{A}^{(1,3)}\right)^*\n\mc{X}\n\mc{A}^*\n\mc{A}\n\mc{A}^{(1,3)}\\
&=&\left(\mc{A}^{(1,3)}\right)^*\n\mc{A}^*\n\mc{X}\n\mc{A}\n\mc{A}^{(1,3)}=\mc{A}\n\mc{A}^{(1,3)}\n\mc{X}\n\mc{A}\n\mc{A}^{(1,3)}.
\end{eqnarray}
From Eqs \eqref{eq3} and \eqref{eq4}, we get $\mc{A}\n\mc{A}^{(1,3)}\n\mc{X}=\mc{X}\n\mc{A}\n\mc{A}^{(1,3)}$.
\end{proof}
In view of Lemma \ref{lm2.5}, \ref{lm2.12}, and \ref{lm2.13}, we obtain the following result as a corollary.
\begin{corollary}\label{cor2.14}
Let $\mc{A}\in \mathbb{C}^{\textbf{N}(n)\times \textbf{N}(n)}$ and ind$(\mc{A})=1$. If there exist a tensor $\mc{X}\in \mathbb{C}^{\textbf{N}(n)\times \textbf{N}(n)}$ such that $\mc{A}\n\mc{X}=\mc{X}\n\mc{A}$ and $\mc{A}^*\n\mc{X}=\mc{X}\n\mc{A}^*$, then $\mc{A}^{\core}\n\mc{X}=\mc{X}\n\mc{A}^{\core}$.
\end{corollary}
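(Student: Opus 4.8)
The plan is to recall the formula for the core inverse supplied by Lemma~\ref{lm2.5}, namely $\mc{A}^{\core}=\mc{A}^{\#}\n\mc{A}\n\mc{A}^{(1,3)}$, and to show that each of the three factors $\mc{A}^{\#}$, $\mc{A}$, and $\mc{A}\n\mc{A}^{(1,3)}$ commutes with $\mc{X}$ under the given hypotheses. Since $\mc{A}^{\core}$ is built as a product of these factors, commutativity of the product with $\mc{X}$ will follow by inserting $\mc{X}$ and sliding it past each factor in turn.

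First I would observe that $\mc{X}$ commutes with $\mc{A}$ by hypothesis. Next, since $\mathrm{ind}(\mc{A})=1$, the group inverse $\mc{A}^{\#}$ exists, and the hypothesis $\mc{A}\n\mc{X}=\mc{X}\n\mc{A}$ together with Lemma~\ref{lm2.12} (applied with $\mc{B}=\mc{X}$, provided $\mc{X}$ has index one, or more directly via the standard group-inverse commutation argument) yields $\mc{A}^{\#}\n\mc{X}=\mc{X}\n\mc{A}^{\#}$. Finally, the remaining hypothesis $\mc{A}^*\n\mc{X}=\mc{X}\n\mc{A}^*$, combined with $\mc{A}\n\mc{X}=\mc{X}\n\mc{A}$, is exactly the setup of Lemma~\ref{lm2.13}, which gives $\mc{A}\n\mc{A}^{(1,3)}\n\mc{X}=\mc{X}\n\mc{A}\n\mc{A}^{(1,3)}$.

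Having established that $\mc{X}$ commutes with each of $\mc{A}^{\#}$, $\mc{A}$, and $\mc{A}\n\mc{A}^{(1,3)}$, I would then compute
\begin{equation*}
\mc{A}^{\core}\n\mc{X}=\mc{A}^{\#}\n\mc{A}\n\mc{A}^{(1,3)}\n\mc{X}=\mc{A}^{\#}\n\mc{A}\n\mc{X}\n\mc{A}\n\mc{A}^{(1,3)}=\mc{X}\n\mc{A}^{\#}\n\mc{A}\n\mc{A}^{(1,3)}=\mc{X}\n\mc{A}^{\core},
\end{equation*}
pushing $\mc{X}$ leftward through the block $\mc{A}\n\mc{A}^{(1,3)}$ first, then through $\mc{A}$, and finally through $\mc{A}^{\#}$, using the three commutation relations in succession.

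The main obstacle I anticipate is the step asserting $\mc{A}^{\#}\n\mc{X}=\mc{X}\n\mc{A}^{\#}$. Lemma~\ref{lm2.12} as stated requires \emph{both} tensors to have index one, so I would need to confirm that the commutation $\mc{A}^{\#}\n\mc{X}=\mc{X}\n\mc{A}^{\#}$ follows from $\mc{A}\n\mc{X}=\mc{X}\n\mc{A}$ without assuming anything about the index of $\mc{X}$; this is the classical fact that the group inverse of $\mc{A}$ is a polynomial in $\mc{A}$ (or, equivalently, that anything commuting with $\mc{A}$ commutes with $\mc{A}^{\#}$), and I would either invoke that directly or derive it by the identity $\mc{A}^{\#}=\mc{A}^{\#}\n\mc{A}\n\mc{A}^{\#}$ and repeated use of $\mc{A}\n\mc{A}^{\#}=\mc{A}^{\#}\n\mc{A}$. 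Once that commutation is secured, the rest is a routine chain of substitutions and the result follows as a clean corollary of Lemmas~\ref{lm2.5}, \ref{lm2.12}, and \ref{lm2.13}.
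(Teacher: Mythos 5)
Your proposal is correct and is essentially the paper's own argument: the paper offers no written proof of Corollary~\ref{cor2.14}, saying only that it follows ``in view of Lemma \ref{lm2.5}, \ref{lm2.12}, and \ref{lm2.13}'', and those are exactly the three ingredients you assemble. Your worry about Lemma~\ref{lm2.12} is justified, and here you are in fact more careful than the paper: as stated, that lemma assumes $\mathrm{ind}(\mc{X})=1$, which the corollary does not, so one must fall back, as you suggest, on the standard fact that whatever commutes with $\mc{A}$ commutes with $\mc{A}^{\#}$ (a short computation shows the idempotent $\mc{A}\n\mc{A}^{\#}$ commutes with $\mc{X}$, and then $\mc{A}^{\#}\n\mc{X}=\mc{X}\n\mc{A}^{\#}$ follows using $\mc{A}\n(\mc{A}^{\#})^2=\mc{A}^{\#}$; alternatively, $\mc{A}^{\#}$ is a polynomial in $\mc{A}$). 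One slip to fix: in your displayed chain the middle expression $\mc{A}^{\#}\n\mc{A}\n\mc{X}\n\mc{A}\n\mc{A}^{(1,3)}$ carries a spurious extra factor $\mc{A}$; as written it collapses, via $\mc{A}^{\#}\n\mc{A}^2=\mc{A}$, to $\mc{X}\n\mc{A}\n\mc{A}^{(1,3)}$, which is not $\mc{X}\n\mc{A}^{\core}$ in general. Since Lemma~\ref{lm2.13} moves $\mc{X}$ past the whole block $\mc{A}\n\mc{A}^{(1,3)}$ at once, only two pushes are needed, and the chain should read
\begin{equation*}
\mc{A}^{\core}\n\mc{X}=\mc{A}^{\#}\n\left(\mc{A}\n\mc{A}^{(1,3)}\n\mc{X}\right)=\mc{A}^{\#}\n\mc{X}\n\mc{A}\n\mc{A}^{(1,3)}=\mc{X}\n\mc{A}^{\#}\n\mc{A}\n\mc{A}^{(1,3)}=\mc{X}\n\mc{A}^{\core}.
\end{equation*}
With that correction the proof is complete and matches the paper's intended derivation.
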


\section{Reverse-order law}
In this section, we discuss various necessary and sufficient conditions of the reverse-order law for the core inverses of tensors. The very first result obtained below deals with sufficient condition for the reverse-order law of tensors 
\begin{theorem}\label{thm3.1}
Let $\mc{A},~\mc{B},~\mc{A}\n\mc{B}\in\C^{\textbf{N}(n)\times \textbf{N}(n)}$ be core tensors. If $\mc{A}\n\mc{B}\n\mc{B}^{\core}=\mc{B}\n\mc{B}^{\core}\n\mc{A}$ and $\mc{B}\n\mc{A}\n\mc{A}^{\core}=\mc{A}\n\mc{A}^{\core}\n\mc{B}$, then
\begin{equation*}
(\mc{A}\n\mc{B})^{\core}=\mc{B}^{\core}\n\mc{A}^{\core}.
\end{equation*}
\end{theorem}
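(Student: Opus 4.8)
The plan is to set $\mc{X}:=\mc{B}^{\core}\n\mc{A}^{\core}$ and to show that it meets the defining conditions of the core inverse of $\mc{A}\n\mc{B}$. Since $\mc{A}\n\mc{B}$ is assumed to be a core tensor, Lemma \ref{lm2.11}(a) reduces the task to verifying (i)~$(\mc{A}\n\mc{B})\n\mc{X}\n(\mc{A}\n\mc{B})=\mc{A}\n\mc{B}$, (ii)~$\left((\mc{A}\n\mc{B})\n\mc{X}\right)^*=(\mc{A}\n\mc{B})\n\mc{X}$, and (iii)~$(\mc{A}\n\mc{B})\n\mc{X}^2=\mc{X}$. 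Before doing this I would record the elementary identities coming straight from Definition \ref{coredef}: that $\mc{A}\n\mc{A}^{\core}\n\mc{A}=\mc{A}$ (combining (6) and (7)), that $\mc{A}\n(\mc{A}^{\core})^2=\mc{A}^{\core}$ (condition (7)), and that $P:=\mc{A}\n\mc{A}^{\core}$ is Hermitian (condition (3)); the same identities hold for $\mc{B}$ with $Q:=\mc{B}\n\mc{B}^{\core}$.

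The decisive preparatory step is to reinterpret the two hypotheses in terms of $P$ and $Q$. The assumption $\mc{A}\n\mc{B}\n\mc{B}^{\core}=\mc{B}\n\mc{B}^{\core}\n\mc{A}$ says precisely that $\mc{A}$ commutes with the Hermitian tensor $Q$; taking adjoints and using $Q^*=Q$ shows $\mc{A}^*$ commutes with $Q$ as well, so Corollary \ref{cor2.14} (applicable since $\mc{A}$ has index one) upgrades this to $\mc{A}^{\core}\n Q=Q\n\mc{A}^{\core}$. Feeding this back into the hypothesis yields the key commutation $P\n Q=Q\n P$. Symmetrically, $\mc{B}\n\mc{A}\n\mc{A}^{\core}=\mc{A}\n\mc{A}^{\core}\n\mc{B}$ says $\mc{B}$ (and hence $\mc{B}^*$) commutes with the Hermitian tensor $P$, whence Corollary \ref{cor2.14} gives $\mc{B}^{\core}\n P=P\n\mc{B}^{\core}$.

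With these relations in hand, the three verifications are short rewrites. For (i), I would apply the first hypothesis to turn $(\mc{A}\n\mc{B})\n\mc{X}\n(\mc{A}\n\mc{B})$ into $Q\n\mc{A}\n\mc{A}^{\core}\n\mc{A}\n\mc{B}$, collapse $\mc{A}\n\mc{A}^{\core}\n\mc{A}=\mc{A}$, apply the hypothesis once more, and finish with $\mc{B}\n\mc{B}^{\core}\n\mc{B}=\mc{B}$. For (ii), the same substitution rewrites $(\mc{A}\n\mc{B})\n\mc{X}$ as $Q\n P$, which is Hermitian exactly because $P$ and $Q$ are Hermitian and commute. For (iii), I would push $\mc{B}^{\core}$ past $P$ via $\mc{B}^{\core}\n P=P\n\mc{B}^{\core}$, collapse $P\n\mc{A}^{\core}=\mc{A}\n(\mc{A}^{\core})^2=\mc{A}^{\core}$ using (7), and then use $\mc{B}\n(\mc{B}^{\core})^2=\mc{B}^{\core}$ to recover $\mc{X}$. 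The main obstacle is condition (ii): the reverse-order product is visibly Hermitian only once one knows the two range projectors $P$ and $Q$ commute, and securing that commutation is exactly where Corollary \ref{cor2.14} together with the Hermitian-ness of $P$ and $Q$ does the essential work; the remaining two identities are then routine manipulations.
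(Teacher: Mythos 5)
Your proposal is correct and takes essentially the same route as the paper's own proof: both reinterpret the hypotheses by taking adjoints, invoke Corollary \ref{cor2.14} to obtain the commutations $\mc{A}^{\core}\n\mc{B}\n\mc{B}^{\core}=\mc{B}\n\mc{B}^{\core}\n\mc{A}^{\core}$ and $\mc{B}^{\core}\n\mc{A}\n\mc{A}^{\core}=\mc{A}\n\mc{A}^{\core}\n\mc{B}^{\core}$, and then verify the three conditions of Lemma \ref{lm2.11}(a) for $\mc{X}=\mc{B}^{\core}\n\mc{A}^{\core}$. Your projector notation $P=\mc{A}\n\mc{A}^{\core}$, $Q=\mc{B}\n\mc{B}^{\core}$ and the explicit identity $P\n Q=Q\n P$ are only a cosmetic streamlining of the paper's identical computations, which write out $(\mc{A}\n\mc{B})\n\mc{X}=Q\n P$ longhand.
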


\begin{proof}
Let $\mc{A}\n\mc{B}\n\mc{B}^{\core}=\mc{B}\n\mc{B}^{\core}\n\mc{A}$ and $\mc{B}\n\mc{A}\n\mc{A}^{\core}=\mc{A}\n\mc{A}^{\core}\n\mc{B}$. By taking both sides, transpose of conjugate  and using core definition, we obtain
\begin{equation}\label{eq5}
  \mc{B}\n\mc{B}^{\core}\n\mc{A}^*= \mc{A}^*\n\mc{B}\n\mc{B}^{\core} \mbox{ and } \mc{A}\n\mc{A}^{\core}\n\mc{B}^*=\mc{B}^*\n\mc{A}\n\mc{A}^{\core}.
\end{equation}
So by Corollary \ref{cor2.14}, we have
\begin{equation}\label{eq6}
  \mc{A}^{\core}\n\mc{B}\n\mc{B}^{\core}=\mc{B}\n\mc{B}^{\core}\n\mc{A}^{\core} \mbox{ and }  \mc{B}^{\core}\n\mc{A}\n\mc{A}^{\core}=\mc{A}\n\mc{A}^{\core}\n\mc{B}^{\core}.
\end{equation}
Let $\mc{X}=\mc{B}^{\core}\mc{A}^{\core}$. Now applying Eqs \eqref{eq5}, \eqref{eq6}, we get
\begin{eqnarray*}
\mc{A}\n\mc{B}\n\mc{X}\n\mc{A}\n\mc{B}&=&\mc{A}\n(\mc{B}\n\mc{B}^{\core}\n\mc{A}^{\core})\n\mc{A}\n\mc{B}=\mc{A}\n\mc{A}^{\core}\n(\mc{B}\n\mc{B}^{\core}\n\mc{A})\n\mc{B}\\
&=&\mc{A}\n\mc{A}^{\core}\n\mc{A}\n\mc{B}\n\mc{B}^{\core}\n\mc{B}=\mc{A}\n\mc{B},
\end{eqnarray*}
\begin{eqnarray*}
\mc{X}&=&\mc{B}^{\core}\n\mc{A}^{\core}=\mc{B}^{\core}\n\mc{A}\n(\mc{A}^{\core})^2=\mc{A}\n\mc{A}^{\core}\n\mc{B}^{\core}\n\mc{A}^{\core}=\mc{A}\n\mc{A}^{\core}\n\mc{B}\n(\mc{B}^{\core})^2\n\mc{A}^{\core}\\
&=& \mc{A}\n(\mc{A}^{\core}\n\mc{B}\n\mc{B}^{\core})\n\mc{B}^{\core}\n\mc{A}^{\core}=\mc{A}\n\mc{B}\n\mc{B}^{\core}\n\mc{A}^{\core}\n\mc{B}^{\core}\n\mc{A}^{\core}=\mc{A}\n\mc{B}\n\mc{X}^2,
\end{eqnarray*}
and
\begin{eqnarray*}
(\mc{A}\n\mc{B}\n\mc{X})^*&=&(\mc{A}\n\mc{B}\n\mc{B}^{\core}\n\mc{A}^{\core})^*=(\mc{A}^{\core})^*\n(\mc{B}^{\core})^*\n\mc{B}^*\n\mc{A}^*\\
&=&(\mc{A}^{\core})^*\n(\mc{B}\n\mc{B}^{\core}\n\mc{A}^*)=(\mc{A}^{\core})^*\n\mc{A}^*\n\mc{B}\n\mc{B}^{\core}=\mc{A}\n(\mc{A}^{\core}\n\mc{B}\n\mc{B}^{\core})\\
&=& \mc{A}\n\mc{B}\n\mc{B}^{\core}\n\mc{A}^{\core}=\mc{A}\n\mc{B}\n\mc{X}.
\end{eqnarray*}
From Lemma \ref{lm2.11} $(a)$, $\mc{X}$ is the core inverse of $\mc{A}\n\mc{B}$. Therefore, $(\mc{A}\n\mc{B})^{\core}=\mc{X}=\mc{B}^{\core}\n\mc{A}^{\core}$.
\end{proof}
If we consider $\mc{B}=\mc{A}^{\core}$ in the above theorem we get the following result. 
\begin{corollary}
Let $\mc{A}\in\C^{\textbf{N}(n)\times \textbf{N}(n)}$ be a core tensors. Then
     $(\mc{A}\n\mc{A}^{\core})^{\core}=\mc{A}\n\mc{A}^{\core}$ and $(\mc{A}^{\core}\n\mc{A})^{\core}=\mc{A}^{\core}\n\mc{A}$.
\end{corollary}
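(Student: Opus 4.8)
The plan is to read this off Theorem~\ref{thm3.1} as the special case $\mc{B}=\mc{A}^{\core}$, but I expect the cleanest route to be through the idempotent structure of the two products. First I would record the core identities I intend to reuse: from Definition~\ref{coredef} and the characterizations in Lemma~\ref{lm2.11}, the core inverse satisfies $\mc{A}\n\mc{A}^{\core}\n\mc{A}=\mc{A}$, $\mc{A}^{\core}\n\mc{A}\n\mc{A}^{\core}=\mc{A}^{\core}$, and $(\mc{A}\n\mc{A}^{\core})^{*}=\mc{A}\n\mc{A}^{\core}$. Writing $\mc{P}:=\mc{A}\n\mc{A}^{\core}$, the first of these gives $\mc{P}^{2}=\mc{A}\n\mc{A}^{\core}\n\mc{A}\n\mc{A}^{\core}=\mc{A}\n\mc{A}^{\core}=\mc{P}$, so $\mc{P}$ is idempotent; the same computation shows $\mc{Q}:=\mc{A}^{\core}\n\mc{A}$ is idempotent. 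An idempotent tensor has index one, so $\mc{P}$ and $\mc{Q}$ are core tensors and their core inverses exist.

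For the first equality I would argue directly that $\mc{P}$ is a Hermitian idempotent and therefore EP. Indeed, $\mc{P}$ satisfies the four Penrose equations with itself in the role of the inverse, so $\mc{P}^{\dagger}=\mc{P}$; being idempotent it also equals its own group inverse, $\mc{P}^{\#}=\mc{P}$. Lemma~\ref{lm2.6} then yields $\mc{P}^{\core}=\mc{P}^{\dagger}=\mc{P}^{\#}=\mc{P}$, which is precisely $(\mc{A}\n\mc{A}^{\core})^{\core}=\mc{A}\n\mc{A}^{\core}$. This realizes the specialization $\mc{B}=\mc{A}^{\core}$ at the level of conclusions, but I would rather establish it in this way because it avoids checking the two commutation hypotheses of Theorem~\ref{thm3.1} for the particular pair $(\mc{A},\mc{A}^{\core})$, which is the delicate part of the literal substitution; in the course of that substitution one would also have to collapse the factor $(\mc{A}^{\core})^{\core}$ appearing on the right-hand side, which I would do using $\rg(\mc{A}^{\core})=\rg(\mc{A})$ (via Lemma~\ref{lm2.7}).

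The second equality, $(\mc{A}^{\core}\n\mc{A})^{\core}=\mc{A}^{\core}\n\mc{A}$, is where I expect the main obstacle. The idempotent $\mc{Q}=\mc{A}^{\core}\n\mc{A}$ is, unlike $\mc{P}$, \emph{not} governed by the Hermitian condition (3)—there is no core axiom forcing $\mc{Q}^{*}=\mc{Q}$—so the EP shortcut through Lemma~\ref{lm2.6} is not available verbatim and one cannot simply declare $\mc{Q}^{\core}=\mc{Q}$. Here I would instead appeal to the uniqueness of the core inverse: I would attempt to verify that the candidate on the right satisfies one of the condition sets (a) or (b) of Lemma~\ref{lm2.11} relative to $\mc{Q}$, reducing the idempotent relation and the absorption identities to the three core identities above together with $\rg(\mc{A}^{\core})=\rg(\mc{A})$. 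The genuinely delicate point is the Hermitian requirement $(\mc{Q}\n\mc{Q}^{\core})^{*}=\mc{Q}\n\mc{Q}^{\core}$ hidden in condition (3); this is the step that cannot be dispatched by the orthogonal-projector argument used for $\mc{P}$ and must be handled with care, and it is the part of the statement I would scrutinize most closely before committing to the displayed equality.
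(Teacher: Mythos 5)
Your argument for the first identity is correct, and it is genuinely different from the paper's route: the paper obtains the whole corollary by the bare substitution $\mc{B}=\mc{A}^{\core}$ into Theorem~\ref{thm3.1}, whereas you verify directly that $\mc{P}=\mc{A}\n\mc{A}^{\core}$ is a Hermitian idempotent (axiom (3) gives $\mc{P}^{*}=\mc{P}$, and $\mc{A}\n\mc{A}^{\core}\n\mc{A}=\mc{A}$ gives $\mc{P}^{2}=\mc{P}$), hence its own Moore--Penrose and group inverse, hence its own core inverse by Lemma~\ref{lm2.6} --- or, even more cheaply, by checking (3), (6), (7) of Definition~\ref{coredef} with $\mc{X}=\mc{P}$. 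This buys something real, because the commutation hypotheses of Theorem~\ref{thm3.1} are exactly where the paper's one-line derivation breaks down. With $\mc{B}=\mc{A}^{\core}$ the second hypothesis holds automatically (both sides reduce to $\mc{A}^{\core}$, using $\mc{X}\n\mc{A}\n\mc{X}=\mc{X}$ and axiom (7)), but the first hypothesis becomes $\mc{A}\n\mc{A}^{\core}\n(\mc{A}^{\core})^{\core}=\mc{A}^{\core}\n(\mc{A}^{\core})^{\core}\n\mc{A}$, which fails in general; and, as you note, even granting it one must still identify $(\mc{A}^{\core})^{\core}$ to recover the displayed formulas. So your instinct to bypass the substitution was the right one.

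Your refusal to commit to the second identity is also vindicated: $(\mc{A}^{\core}\n\mc{A})^{\core}=\mc{A}^{\core}\n\mc{A}$ is \emph{false} in general, so the Lemma~\ref{lm2.11} verification you planned would necessarily stall at precisely the Hermitian condition you flagged. For any idempotent $\mc{Q}$, the equality $\mc{Q}^{\core}=\mc{Q}$ forces, via axiom (3), $\mc{Q}=\mc{Q}\n\mc{Q}^{\core}=(\mc{Q}\n\mc{Q}^{\core})^{*}=\mc{Q}^{*}$; but by Lemma~\ref{lm2.5}, $\mc{Q}=\mc{A}^{\core}\n\mc{A}=\mc{A}^{\#}\n\mc{A}\n\mc{A}^{(1,3)}\n\mc{A}=\mc{A}^{\#}\n\mc{A}$ is the projector onto $\rg(\mc{A})$ along $\nl(\mc{A})$, which is Hermitian only when $\mc{A}$ is EP. Concretely (matrices are order-two instances of the setting), take $\mc{A}=\left(\begin{smallmatrix}1&1\\0&0\end{smallmatrix}\right)$: then $\mc{A}^{2}=\mc{A}$, so $\mc{A}$ is a core tensor, and one checks (3), (6), (7) to get $\mc{A}^{\core}=\left(\begin{smallmatrix}1&0\\0&0\end{smallmatrix}\right)$; hence $\mc{A}^{\core}\n\mc{A}=\mc{A}$, while $(\mc{A}^{\core}\n\mc{A})^{\core}=\mc{A}^{\core}\neq\mc{A}^{\core}\n\mc{A}$. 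The same example shows the paper's commutation hypothesis above failing. The second equality therefore needs an extra assumption such as $\mc{A}$ EP, under which Lemma~\ref{lm2.6} gives $\mc{A}^{\core}\n\mc{A}=\mc{A}\n\mc{A}^{\core}$ and the claim collapses to the first identity; as stated, there is no proof to find, and the gap in your proposal is the statement's fault, not yours.
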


\begin{proposition}
Let $\mc{A}\in\C^{\textbf{N}(n)\times \textbf{N}(n)}$ be a core tensor. If $\mc{A}\n\mc{A}^* = \mc{A}^*\n\mc{A}$, then
$\mc{A}\n\mc{A}^{\core}=\mc{A}^{\core}\n\mc{A}$.
\end{proposition}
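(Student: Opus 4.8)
The plan is to recognize that the desired identity $\mc{A}\n\mc{A}^{\core}=\mc{A}^{\core}\n\mc{A}$ is exactly the conclusion of Corollary \ref{cor2.14}, specialized to a particularly convenient choice of the auxiliary tensor. Since $\mc{A}$ is a core tensor, we have $\text{ind}(\mc{A})=1$, so the index hypothesis of Corollary \ref{cor2.14} is automatically met, and it remains only to exhibit a tensor $\mc{X}$ commuting with both $\mc{A}$ and $\mc{A}^*$.

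First I would take $\mc{X}=\mc{A}$. Then $\mc{A}\n\mc{X}=\mc{A}\n\mc{A}=\mc{X}\n\mc{A}$ holds trivially, so the first commutativity hypothesis of Corollary \ref{cor2.14} is satisfied for free. The second hypothesis, $\mc{A}^*\n\mc{X}=\mc{X}\n\mc{A}^*$, becomes $\mc{A}^*\n\mc{A}=\mc{A}\n\mc{A}^*$, which is precisely the normality assumption of the proposition. Applying Corollary \ref{cor2.14} with this $\mc{X}$ then yields $\mc{A}^{\core}\n\mc{A}=\mc{A}\n\mc{A}^{\core}$, which is the assertion. Thus the entire argument collapses to substituting $\mc{X}=\mc{A}$ and reading off the conclusion.

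The main (and essentially only) obstacle is spotting the right substitution: the statement superficially looks as though it requires unwinding the definition of the core inverse, whereas Corollary \ref{cor2.14} already packages the needed commutation argument, and feeding it $\mc{X}=\mc{A}$ converts normality directly into the result. As a back-up, if one preferred not to invoke Corollary \ref{cor2.14}, I would instead show that normality forces $\mc{A}$ to be EP, for instance by using $\rg(\mc{A}\n\mc{A}^*)=\rg(\mc{A})$ and $\rg(\mc{A}^*\n\mc{A})=\rg(\mc{A}^*)$ together with Lemma \ref{lm2.7} to deduce $\rg(\mc{A})=\rg(\mc{A}^*)$, and then appeal to the equivalence of EP with condition (c) in Lemma \ref{lm2.6}. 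This route is longer, and its harder step is translating the range equality into the projector identity $\mc{A}\n\mc{A}^{\dagger}=\mc{A}^{\dagger}\n\mc{A}$.
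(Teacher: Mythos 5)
Your proof is correct and coincides with the paper's intended argument: the paper states this proposition without a written proof, placing it immediately after Corollary \ref{cor2.14}, and the evident justification is exactly your substitution $\mc{X}=\mc{A}$ (a core tensor has ind$(\mc{A})=1$, the first commutation hypothesis is trivial, and the second is the normality assumption). Nothing further is needed; your back-up route via EP is unnecessary here.
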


Note that, if $\mc{A}\n\mc{B}=\mc{B}\n\mc{A}$ and $\mc{A}^*\n\mc{B}=\mc{B}\n\mc{A}^*$, then from Corollary  \ref{cor2.14}, we can easily get $\mc{A}\n\mc{B}\n\mc{B}^{\core}=\mc{B}\n\mc{B}^{\core}\n\mc{A}$ and $\mc{B}\n\mc{A}\n\mc{A}^{\core}=\mc{A}\n\mc{A}^{\core}\n\mc{B}$. In conclusion, we state the following as a corollary.

\begin{corollary}\label{cor3.2}
Let $\mc{A},~\mc{B}\in\C^{\textbf{N}(n)\times \textbf{N}(n)}$ be core tensors. If  $\mc{A}\n\mc{B}=\mc{B}\n\mc{A}$ and $\mc{A}^*\n\mc{B}=\mc{B}\n\mc{A}^*$, then
\begin{equation*}
(\mc{A}\n\mc{B})^{\core}=\mc{B}^{\core}\n\mc{A}^{\core}.
\end{equation*}
\end{corollary}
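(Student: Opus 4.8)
The plan is to deduce this corollary directly from Theorem~\ref{thm3.1}, whose two hypotheses $\mc{A}\n\mc{B}\n\mc{B}^{\core}=\mc{B}\n\mc{B}^{\core}\n\mc{A}$ and $\mc{B}\n\mc{A}\n\mc{A}^{\core}=\mc{A}\n\mc{A}^{\core}\n\mc{B}$ I would derive from the two commutativity assumptions. The heart of the argument is to show that the core inverses inherit the commutativity of the originals, namely that $\mc{A}^{\core}\n\mc{B}=\mc{B}\n\mc{A}^{\core}$ and $\mc{B}^{\core}\n\mc{A}=\mc{A}\n\mc{B}^{\core}$.

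First I would apply Corollary~\ref{cor2.14} to the pair $(\mc{A},\mc{B})$: since $\mathrm{ind}(\mc{A})=1$ and the hypotheses give $\mc{A}\n\mc{B}=\mc{B}\n\mc{A}$ together with $\mc{A}^*\n\mc{B}=\mc{B}\n\mc{A}^*$, the corollary yields $\mc{A}^{\core}\n\mc{B}=\mc{B}\n\mc{A}^{\core}$ at once. To run the symmetric argument with the roles of $\mc{A}$ and $\mc{B}$ interchanged, I first observe that taking the conjugate transpose of $\mc{A}^*\n\mc{B}=\mc{B}\n\mc{A}^*$ produces $\mc{B}^*\n\mc{A}=\mc{A}\n\mc{B}^*$. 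Combined with $\mc{B}\n\mc{A}=\mc{A}\n\mc{B}$ and $\mathrm{ind}(\mc{B})=1$, Corollary~\ref{cor2.14} applied to $(\mc{B},\mc{A})$ then gives $\mc{B}^{\core}\n\mc{A}=\mc{A}\n\mc{B}^{\core}$.

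With these commutation relations in hand, confirming the hypotheses of Theorem~\ref{thm3.1} is a short rearrangement: using $\mc{A}\n\mc{B}=\mc{B}\n\mc{A}$ and $\mc{A}\n\mc{B}^{\core}=\mc{B}^{\core}\n\mc{A}$ one gets $\mc{A}\n\mc{B}\n\mc{B}^{\core}=\mc{B}\n\mc{A}\n\mc{B}^{\core}=\mc{B}\n\mc{B}^{\core}\n\mc{A}$, and symmetrically $\mc{B}\n\mc{A}\n\mc{A}^{\core}=\mc{A}\n\mc{B}\n\mc{A}^{\core}=\mc{A}\n\mc{A}^{\core}\n\mc{B}$.

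The step that requires genuine care is a hidden hypothesis of Theorem~\ref{thm3.1}: it assumes that $\mc{A}\n\mc{B}$ itself is a core tensor, whereas the corollary only posits that $\mc{A}$ and $\mc{B}$ are core tensors. I would supply this from Lemma~\ref{lm2.12}: since $\mc{A}$ and $\mc{B}$ commute and both have index one, $(\mc{A}\n\mc{B})^{\#}$ exists (indeed it equals $\mc{B}^{\#}\n\mc{A}^{\#}$), so $\mathrm{ind}(\mc{A}\n\mc{B})=1$ and $\mc{A}\n\mc{B}$ is a core tensor. Once all three tensors are certified to be core and the two commutation identities are verified, Theorem~\ref{thm3.1} immediately yields $(\mc{A}\n\mc{B})^{\core}=\mc{B}^{\core}\n\mc{A}^{\core}$, completing the proof.
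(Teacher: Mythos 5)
Your proposal is correct and takes essentially the same route as the paper, which likewise obtains the hypotheses $\mc{A}\n\mc{B}\n\mc{B}^{\core}=\mc{B}\n\mc{B}^{\core}\n\mc{A}$ and $\mc{B}\n\mc{A}\n\mc{A}^{\core}=\mc{A}\n\mc{A}^{\core}\n\mc{B}$ from Corollary~\ref{cor2.14} (applied to both ordered pairs, using the conjugate-transposed identity) and then invokes Theorem~\ref{thm3.1}. Your additional step of certifying via Lemma~\ref{lm2.12} that $\mc{A}\n\mc{B}$ is itself a core tensor fills in a hypothesis of Theorem~\ref{thm3.1} that the paper passes over silently, which is a welcome point of rigor rather than a different method.
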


Converse of the above result is not true, and is shown in the following example.
\begin{example}
Let
 $\mc{A} = (a_{ijkl})  \in \mathbb{R}^{2\times 3\times 2\times 3}$ and $\mc{B}
 = (b_{ijkl})\in \mathbb{R}^{2\times 3\times 2\times 3}, ({1 \leq i,k \leq 2}$ and $ {1 \leq j,l \leq 3})$ be two core tensors such that
\begin{eqnarray*}
a_{ij11} =
    \begin{pmatrix}
    1 & 0 & 1\\
    0 & 0 & -1\\
    \end{pmatrix},
a_{ij12} =
    \begin{pmatrix}
    0 & 0 & 0\\
    0 & 0 & 0\\
    \end{pmatrix},
a_{ij13} =
    \begin{pmatrix}
    1 & 0 & 0\\
    1 & 0 & 0\\
    \end{pmatrix},
    \end{eqnarray*}
and
\begin{eqnarray*}
a_{ij21} =
\begin{pmatrix}
    0 & 0 & 1\\
    0 & 1 & 0\\
\end{pmatrix}, ~
    a_{ij22} =
\begin{pmatrix}
    0 & 0 & 0\\
    1 & 1 & 0\\
\end{pmatrix}, ~
    a_{ij23} =
\begin{pmatrix}
    -1 & 0 & 0\\
    0 & 0 & 0\\
\end{pmatrix}. ~
\end{eqnarray*}
\begin{eqnarray*}
b_{ij11} =
    \begin{pmatrix}
    1 & 0 & 0\\
    0 & 0 & 0\\
    \end{pmatrix},
b_{ij12} =
    \begin{pmatrix}
    0 & 0 & 0\\
    0 & 0 & 0\\
    \end{pmatrix},
b_{ij13} =
    \begin{pmatrix}
    0 & 0 & 1\\
    0 & 0 & 0\\
    \end{pmatrix},
    \end{eqnarray*}
and
\begin{eqnarray*}
b_{ij21} =
\begin{pmatrix}
    0 & 0 & 0\\
    1 & 0 & 0\\
\end{pmatrix}, ~
    b_{ij22} =
\begin{pmatrix}
    0 & 0 & 0\\
    0 & 1 & 0\\
\end{pmatrix}, ~
    b_{ij23} =
\begin{pmatrix}
    0 & 0 & 0\\
    0 & 0 & 1\\
\end{pmatrix}, ~
\end{eqnarray*}
respectively.  Then   ${\mc{A}}^{\core} = (x_{ijkl}) \in \mathbb{R}^{2\times 3\times 2\times 3}$ and ${\mc{B}}^{\core} = (y_{ijkl})   \in \mathbb{R}^{2\times 3\times 2\times 3}$, where
\begin{eqnarray*}
x_{ij11} =
    \begin{pmatrix}
    0 & 0 & 0\\
    0 & 0 & -1\\
    \end{pmatrix},
x_{ij12} =
    \begin{pmatrix}
    0 & 0 & 0\\
    0 & 0 & 0\\
    \end{pmatrix},
x_{ij13} =
    \begin{pmatrix}
    0 & 0 & 1\\
    1 & -1 & 1\\
    \end{pmatrix},
    \end{eqnarray*}
and
\begin{eqnarray*}
x_{ij21} =
\begin{pmatrix}
    0 & 0 & 1\\
    0 & 0 & 1\\
\end{pmatrix}, ~
x_{ij22} =
\begin{pmatrix}
    0 & 0 & -1\\
    0 & 1 & -1\\
\end{pmatrix}, ~
    x_{ij23} =
\begin{pmatrix}
    -1 & 0 & 1\\
    1 & -1 & 0\\
\end{pmatrix}. ~
\end{eqnarray*}
\begin{eqnarray*}
y_{ij11} =
    \begin{pmatrix}
    1 & 0 & 0\\
    0 & 0 & 0\\
    \end{pmatrix},
y_{ij12} =
    \begin{pmatrix}
    0 & 0 & 0\\
    0 & 0 & 0\\
    \end{pmatrix},
y_{ij13} =
    \begin{pmatrix}
    0 & 0 & 1\\
    0 & 0 & 0\\
    \end{pmatrix},
    \end{eqnarray*}
and
\begin{eqnarray*}
y_{ij21} =
\begin{pmatrix}
    0 & 0 & 0\\
    1 & 0 & 0\\
\end{pmatrix}, ~
    y_{ij22} =
\begin{pmatrix}
    0 & 0 & 0\\
    -8 & 1 & 0\\
\end{pmatrix}, ~
    y_{ij23} =
\begin{pmatrix}
    0 & 0 & 0\\
    0 & 0 & 1\\
\end{pmatrix}, ~
\end{eqnarray*}
respectively. So  $\mc{B}^{\core} *_N \mc{A}^{\core}  = {(\mc{A}*_N\mc{B})}^{\core}=
 (d_{ijkl})\in \mathbb{R}^{2\times  \times 2\times 3}$,
 where
\begin{eqnarray*}
d_{ij11} =
    \begin{pmatrix}
    0 & 0 & 0\\
    0 & 0 & -1\\
    \end{pmatrix},
d_{ij12} =
    \begin{pmatrix}
    0 & 0 & 0\\
    0 & 0 & 0\\
    \end{pmatrix},
d_{ij13} =
    \begin{pmatrix}
    0 & 0 & 1\\
    9 & -1 & 1\\
    \end{pmatrix},
    \end{eqnarray*}
and
\begin{eqnarray*}
d_{ij21} =
\begin{pmatrix}
    0 & 0 & 1\\
    0 & 0 & 1\\
\end{pmatrix}, ~
    d_{ij22} =
\begin{pmatrix}
    0 & 0 & -1\\
    -8 & 1 & -1\\
\end{pmatrix}, ~
    d_{ij23} =
\begin{pmatrix}
    -1 & 0 & 1\\
    9 & -1 & 0\\
\end{pmatrix}.
\end{eqnarray*}
 However  $(\mc{A}*_N\mc{B}) \neq (\mc{B}*_N\mc{A})$ and  $\mc{A}^*\n\mc{B}\neq \mc{B}\n\mc{A}^*$.
\end{example}

 \begin{theorem}\label{thm3.3}
 Let $\mc{A},~\mc{B},~\mc{A}\n\mc{B}\in\C^{\textbf{N}(n)\times \textbf{N}(n)}$ be core tensors. Then
\begin{equation*}
\rg(\mc{A}\n\mc{B}^{\core})\subseteq\rg(\mc{B}^{\core}\n\mc{A}^{\core}) ~~and~~ \mc{B}^*\n\mc{A}^{\#}(\mc{I}-(\mc{A}\n\mc{B}\n\mc{B}^{\core}\n\mc{A}^{\core})^*)\n\mc{A}=\mc{O}
\end{equation*}
if and only if
 \begin{equation*}
(\mc{A}\n\mc{B})^{\core}=\mc{B}^{\core}\n\mc{A}^{\core}
 \end{equation*}
 \end{theorem}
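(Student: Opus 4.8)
The plan is to abbreviate $\mc{C}=\mc{A}\n\mc{B}$ and $\mc{X}=\mc{B}^{\core}\n\mc{A}^{\core}$, so that the assertion reads $\mc{X}=\mc{C}^{\core}$, and to prove both implications by testing $\mc{X}$ against the reduced three-equation characterisation of the core inverse in Lemma \ref{lm2.11}. Before splitting into the two directions I would record a few facts valid under the standing hypotheses. From the core identities $\mc{B}=\mc{B}^{\core}\n\mc{B}^2$ and $\mc{B}^{\core}=\mc{B}\n(\mc{B}^{\core})^2$ one gets $\mc{A}\n\mc{B}=\mc{A}\n\mc{B}^{\core}\n\mc{B}^2$ and $\mc{A}\n\mc{B}^{\core}=\mc{A}\n\mc{B}\n(\mc{B}^{\core})^2$, whence $\rg(\mc{A}\n\mc{B}^{\core})=\rg(\mc{A}\n\mc{B})=\rg(\mc{C})$; the same computation for $\mc{C}$ gives $\rg(\mc{C}^{\core})=\rg(\mc{C})$. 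Thus the first hypothesis collapses to the single inclusion $\rg(\mc{C})\subseteq\rg(\mc{X})$, which by Lemma \ref{lm2.7} is equivalent to a factorisation $\mc{C}=\mc{X}\n\mc{V}$ for some $\mc{V}$. I would also note that, by condition (3) together with condition (1) (which holds by Lemma \ref{lm2.11}), each of $\mc{A}\n\mc{A}^{\core}$, $\mc{B}\n\mc{B}^{\core}$ and $\mc{C}\n\mc{C}^{\core}$ is a Hermitian idempotent, i.e.\ the orthogonal projector onto the corresponding range, whereas $\mc{A}^{\#}\n\mc{A}=\mc{A}\n\mc{A}^{\#}$ is only an oblique idempotent onto $\rg(\mc{A})$.

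For the ($\Leftarrow$) direction I would substitute $\mc{X}=\mc{C}^{\core}$ throughout. The range inclusion is then immediate, since $\rg(\mc{X})=\rg(\mc{C}^{\core})=\rg(\mc{C})=\rg(\mc{A}\n\mc{B}^{\core})$. For the second identity, $\mc{A}\n\mc{B}\n\mc{B}^{\core}\n\mc{A}^{\core}=\mc{C}\n\mc{C}^{\core}$ is the orthogonal projector onto $\rg(\mc{C})$ and hence is Hermitian, so $(\mc{A}\n\mc{B}\n\mc{B}^{\core}\n\mc{A}^{\core})^*=\mc{C}\n\mc{C}^{\core}$; substituting this reduces the left-hand side to $\mc{B}^*\n\mc{A}^{\#}\n(\mc{I}-\mc{C}\n\mc{C}^{\core})\n\mc{A}$, which I would annihilate by using $\rg(\mc{C})\subseteq\rg(\mc{A})$ together with the index-one identities $\mc{A}^{\#}\n\mc{A}^2=\mc{A}$ and $\mc{A}\n\mc{A}^{\core}\n\mc{A}=\mc{A}$ to collapse the term $\mc{A}^{\#}\n\mc{C}\n\mc{C}^{\core}\n\mc{A}$ against $\mc{A}^{\#}\n\mc{A}$.

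For the ($\Rightarrow$) direction I would aim at Lemma \ref{lm2.11}(b), whose three requirements are $\mc{X}\n\mc{C}\n\mc{X}=\mc{X}$, $(\mc{C}\n\mc{X})^*=\mc{C}\n\mc{X}$ and $\mc{X}\n\mc{C}^2=\mc{C}$. Writing $\mc{P}:=\mc{C}\n\mc{X}=\mc{A}\n\mc{B}\n\mc{B}^{\core}\n\mc{A}^{\core}$, the second hypothesis is the tool that forces $\mc{P}^*=\mc{P}$ and $\mc{C}\n\mc{X}\n\mc{C}=\mc{C}$; the latter makes $\mc{P}$ idempotent, since $\mc{P}^2=\mc{C}\n\mc{X}\n\mc{C}\n\mc{X}=\mc{C}\n\mc{X}=\mc{P}$, with $\mc{P}\n\mc{C}=\mc{C}$, so that $\mc{P}$ is the orthogonal projector onto $\rg(\mc{C})$. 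The first hypothesis, in the form $\mc{C}=\mc{X}\n\mc{V}$, then upgrades the inner identity to the outer one through $\mc{X}\n\mc{C}^2=\mc{X}\n\mc{C}\n\mc{X}\n\mc{V}=\mc{X}\n\mc{V}=\mc{C}$, while the inner identity $\mc{X}\n\mc{C}\n\mc{X}=\mc{X}$ is read off from $\mc{P}=P_{\rg(\mc{C})}$ once one checks the adjoint inclusion $\rg(\mc{X}^*)\subseteq\rg(\mc{C})$, which is the remaining content also hidden in the second hypothesis. Lemma \ref{lm2.11}(b) then delivers $\mc{X}=\mc{C}^{\core}$.

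The hard part will be this extraction from the second hypothesis. The obstruction is that $\mc{B}^*\n\mc{A}^{\#}(\mc{I}-\mc{P}^*)\n\mc{A}=\mc{O}$ entangles the oblique idempotent $\mc{A}^{\#}\n\mc{A}$ coming from the group inverse with the orthogonal projectors $\mc{A}\n\mc{A}^{\core}$ and $\mc{B}\n\mc{B}^{\core}$ coming from the core inverse, and these do not commute in general. The manoeuvre I anticipate is to take conjugate transposes, use that the group inverse commutes with conjugate transposition so that $(\mc{A}^{\#})^*=(\mc{A}^*)^{\#}$, and then route the defect $(\mc{I}-\mc{P}^*)\n\mc{A}$ through the multiplier $\mc{B}^*\n\mc{A}^{\#}$ by means of $\rg(\mc{C})\subseteq\rg(\mc{A})$ and the identities $\mc{A}^{\#}\n\mc{A}^2=\mc{A}$, $\mc{A}\n\mc{A}^{\core}\n\mc{A}=\mc{A}$, thereby peeling off the self-adjointness of $\mc{P}$, the fixing relation $\mc{P}\n\mc{C}=\mc{C}$, and the adjoint-range inclusion separately. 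Keeping this bookkeeping straight, rather than any single calculation, is the real technical heart of the theorem.
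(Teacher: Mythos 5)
Your forward direction (reverse-order law implies the two conditions) is sound and follows essentially the paper's own computation, and your preliminary observations are correct and even tidy: $\rg(\mc{A}\n\mc{B}^{\core})=\rg(\mc{A}\n\mc{B})$ does collapse the first hypothesis to $\rg(\mc{C})\subseteq\rg(\mc{X})$, and your derivation $\mc{X}\n\mc{C}^2=\mc{X}\n\mc{C}\n\mc{X}\n\mc{V}=\mc{X}\n\mc{V}=\mc{C}$ is slicker than the paper's route to condition (6). But the converse direction has a genuine gap: you claim the second hypothesis alone forces both $\mc{P}^*=\mc{P}$ and the $\{1\}$-equation $\mc{C}\n\mc{X}\n\mc{C}=\mc{C}$, and you then build everything ($\mc{P}$ idempotent, $\mc{P}=P_{\rg(\mc{C})}$, the $\{2\}$-identity via the adjoint inclusion) on top of the $\{1\}$-equation. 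That claim is false. Take the order-two (matrix) example $\mc{A}=\begin{pmatrix}1&1\\0&0\end{pmatrix}$, $\mc{B}=\begin{pmatrix}0&0\\1&1\end{pmatrix}$: both are idempotent, hence core tensors with $\mc{A}^{\#}=\mc{A}$, and $\mc{A}^{\core}$, $\mc{B}^{\core}$ are the orthogonal projectors onto $\mathrm{span}(e_1)$, $\mathrm{span}(e_2)$ respectively; moreover $\mc{C}=\mc{A}\n\mc{B}=\mc{A}$ is idempotent, so all index-one hypotheses hold. Here $\mc{X}=\mc{B}^{\core}\n\mc{A}^{\core}=\mc{O}$ and $\mc{P}=\mc{O}$, and the second hypothesis reduces to $\mc{B}^*\n\mc{A}^{\#}\n\mc{A}=\mc{B}^*\n\mc{A}=\mc{O}$, which holds — yet $\mc{C}\n\mc{X}\n\mc{C}=\mc{O}\neq\mc{C}$. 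The $\{1\}$-equation is exactly what cannot be extracted before the range hypothesis enters (in this example the range hypothesis is what fails: $\rg(\mc{A}\n\mc{B}^{\core})=\mathrm{span}(e_1)\not\subseteq\rg(\mc{X})=\{\mc{O}\}$), so your chain of deductions collapses at its first link.

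What the second hypothesis alone yields — and this is what the paper's proof extracts — is strictly weaker but sufficient: writing $\mc{A}^{\core}=\mc{A}^{\#}\n\mc{A}\n\mc{A}^{\dagger}$ (Lemma \ref{lm2.5}) and conjugating, the paper derives $\mc{X}^*=\mc{P}\n\mc{X}^*$, hence $\mc{P}=\mc{P}\n\mc{P}^*$, hence $\mc{P}^*=\mc{P}$ and then the $\{2\}$-identity $\mc{X}\n\mc{C}\n\mc{X}=\mc{X}$; at that stage $\mc{P}$ is a Hermitian idempotent whose range may be a \emph{proper} subspace of $\rg(\mc{C})$, which is why the paper closes via Lemma \ref{lm2.11}(b) — requiring only (2), (3), (6), never (1) — after the range hypothesis delivers $\mc{X}\n\mc{C}^2=\mc{C}$; the $\{1\}$-equation holds only a posteriori, once $\mc{X}=\mc{C}^{\core}$ is established. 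The repair to your argument is therefore to replace the step ``hypothesis 2 $\Rightarrow$ $\{1\}$'' by this chain ``hypothesis 2 $\Rightarrow$ $\mc{X}^*=\mc{P}\n\mc{X}^*$ $\Rightarrow$ $\mc{P}$ Hermitian $\Rightarrow$ $\{2\}$'' (noting $\mc{X}\n\mc{P}=\mc{X}$ already encodes $\rg(\mc{X}^*)\subseteq\rg(\mc{P})$, so no identification of $\mc{P}$ with $P_{\rg(\mc{C})}$ is needed), and then feed the $\{2\}$-identity into your own derivation of (6). You correctly diagnosed this extraction as ``the real technical heart'' of the theorem, but you deferred it rather than executed it, and the structure you proposed in its place asserts an implication that the counterexample above refutes.
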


 \begin{proof}
 Let $(\mc{A}\n\mc{B})^{\core}=\mc{B}^{\core}\n\mc{A}^{\core}$. Then $\rg(\mc{A}\n\mc{B}^{\core})\subseteq\rg(\mc{B}^{\core}\n\mc{A}^{\core})$ since  $\mc{A}\n\mc{B}^{\core}=\mc{A}\n\mc{B}\n(\mc{B}^{\core})^2=(\mc{A}\n\mc{B})^{\core}\n(\mc{A}\n\mc{B})^2\n(\mc{B}^{\core})^2=\mc{B}^{\core}\n\mc{A}^{\core}\n(\mc{A}\n\mc{B})^2\n(\mc{B}^{\core})^2$. Further,
 \begin{eqnarray*}
  \mc{B}^*\n\mc{A}^{\#}\n\mc{A}&=& \mc{B}^*\n\mc{A}\n\mc{A}^{\#}=\mc{B}^*\n\mc{B}\n\mc{B}^{\core}\n\mc{A}\n\mc{A}^{\#}=\mc{B}^*\n\mc{B}\n\mc{B}^{\core}\n\mc{A}^{\core}\n\mc{A}^2\n\mc{A}^{\#}\\
  &=&\mc{B}^*\n\mc{B}\n\mc{B}^{\core}\n\mc{A}^{\core}\n\mc{A}=\mc{B}^*\n\mc{B}\n\mc{B}^{\core}\n\mc{A}^{\core}\n\mc{A}\n\mc{B}\n\mc{B}^{\core}\n\mc{A}^{\core}\n\mc{A}\\
  &=& \mc{B}^*\n\mc{A}^{\core}\n\mc{A}\n\mc{B}\n\mc{B}^{\core}\n\mc{A}^{\core}\n\mc{A}=\mc{B}^*\n\mc{A}^{\#}\n\mc{A}\n\mc{B}\n\mc{B}^{\core}\n\mc{A}^{\core}\n\mc{A}\\
  &=& \mc{B}^*\n\mc{A}^{\#}\n(\mc{A}\n\mc{B}\n\mc{B}^{\core}\n\mc{A}^{\core})^*\n\mc{A}.
 \end{eqnarray*}
Thus $\mc{B}^*\n\mc{A}^{\#}(\mc{I}-(\mc{A}\n\mc{B}\n\mc{B}^{\core}\n\mc{A}^{\core})^*)\n\mc{A}=\mc{O}$.\\
Conversely, let $\mc{B}^*\n\mc{A}^{\#}\n\mc{A}=\mc{B}^*\n\mc{A}^{\#}\n(\mc{A}\n\mc{B}\n\mc{B}^{\core}\n\mc{A}^{\core})^*\n\mc{A}$. Then
\begin{eqnarray*}
\mc{B}^*\n\mc{A}^{\core}&=&\mc{B}^*\n\mc{A}^{\#}\n\mc{A}\n\mc{A}^{\dagger}=\mc{B}^*\n\mc{A}^{\#}\n(\mc{A}\n\mc{B}\n\mc{B}^{\core}\n\mc{A}^{\core})^*\n\mc{A}\n\mc{A}^\dagger\\
&=& \mc{B}^*\n\mc{A}^{\#}\n(\mc{A}^{\core})^*\n\mc{B}\n\mc{B}^{\core}\n\mc{A}^{*}\mc{A}\n\mc{A}^\dagger= \mc{B}^*\n\mc{A}^{\core}\n(\mc{A}^{\core})^*\n\mc{B}\n\mc{B}^{\core}\n\mc{A}^{*}.
\end{eqnarray*}
This yields, $(\mc{A}^{\core})^*\n\mc{B}=\mc{A}\n\mc{B}\n\mc{B}^{\core}\n\mc{A}^{\core}\n(\mc{A}^{\core})^*\n\mc{B}$. Using this relation, we obtain
\begin{eqnarray}\label{eq7}
\nonumber
(\mc{B}^{\core}\n\mc{A}^{\core})^*&=&(\mc{A}^{\core})^*\n(\mc{B}^{\core})^*=(\mc{A}^{\core})^*\mc{B}\n\mc{B}^{\core}\n(\mc{B}^{\core})^*\\
\nonumber
&=&\mc{A}\n\mc{B}\n\mc{B}^{\core}\n\mc{A}^{\core}\n(\mc{A}^{\core})^*\n\mc{B}\n\mc{B}^{\core}\n(\mc{B}^{\core})^*\\
\nonumber
&=&\mc{A}\n\mc{B}\n\mc{B}^{\core}\n\mc{A}^{\core}\n(\mc{A}^{\core})^*\n(\mc{B}^{\core})^*\\
&=&\mc{A}\n\mc{B}\n\mc{B}^{\core}\n\mc{A}^{\core}\n(\mc{B}^{\core}\n\mc{A}^{\core})^*.
\end{eqnarray}
Which implies $\mc{B}^{\core}\n\mc{A}^{\core}=\mc{B}^{\core}\n\mc{A}^{\core}\n(\mc{A}\n\mc{B}\n\mc{B}^{\core}\n\mc{A}^{\core})^*$. Therefore ,
\begin{equation}\label{eq8}
 \mc{A}\n\mc{B}\n\mc{B}^{\core}\n\mc{A}^{\core}=  \mc{A}\n\mc{B}\n\mc{B}^{\core}\n\mc{A}^{\core}\n(\mc{A}\n\mc{B}\n\mc{B}^{\core}\n\mc{A}^{\core})^*.
\end{equation}
Applying Eq. \eqref{eq7} and Eq. \eqref{eq8}, we get
\begin{equation}\label{eq9}
\mc{A}\n\mc{B}\n\mc{B}^{\core}\n\mc{A}^{\core}=(\mc{B}^{\core}\n\mc{A}^{\core})^*\n(\mc{A}\n\mc{B})^*=(\mc{A}\n\mc{B}\n\mc{B}^{\core}\n\mc{A}^{\core})^*.
\end{equation}
Further, from Eq. \eqref{eq7} and Eq. \eqref{eq9}, we obtain $\mc{B}^{\core}\n\mc{A}^{\core}=\mc{B}^{\core}\n\mc{A}^{\core}\n\mc{A}\n\mc{B}\n\mc{B}^{\core}\n\mc{A}^{\core}$.

Next we will claim $\mc{B}^{\core}\n\mc{A}^{\core}\n(\mc{A}\n\mc{B})^2=\mc{B}^{\core}\n\mc{A}^{\core}$. From
$\rg(\mc{A}\n\mc{B}^{\core})\subseteq\rg(\mc{B}^{\core}\n\mc{A}^{\core})$, we have $\mc{A}\n\mc{B}^{\core}=\mc{B}^{\core}\n\mc{A}^{\core}\n\mc{U}$ for some $\mc{U}\in\mathbb{C}^{\textbf{N}(n)\times \textbf{N}(n)}$. Now by Eq. \eqref{eq8},
\begin{eqnarray*}
\mc{B}^{\core}\n\mc{A}^{\core}\n(\mc{A}\n\mc{B})^2&=& \mc{B}^{\core}\n\mc{A}^{\core}\n\mc{A}\n\mc{B}\n\mc{A}\n\mc{B}^{\core}\n\mc{B}^2\\
&=&\mc{B}^{\core}\n\mc{A}^{\core}\n\mc{A}\n\mc{B}\n\mc{B}^{\core}\n\mc{A}^{\core}\n\mc{U}\n\mc{B}^2=\mc{B}^{\core}\n\mc{A}^{\core}\n\mc{U}\n\mc{B}^2\\
&=&\mc{A}\n\mc{B}^{\core}\n\mc{B}^2=\mc{A}\n\mc{B}.
\end{eqnarray*}
So by Lemma \ref{lm2.11}, $\mc{B}^{\core}\n\mc{A}^{\core}$ is the core inverse of $\mc{A}\n\mc{B}$.
 \end{proof}

 A few necessary conditions of reverse-order law is presented in the following theorem.
  \begin{theorem}\label{thm3.4}
 Let $\mc{A},~\mc{B}\in\C^{\textbf{N}(n)\times \textbf{N}(n)}$ be core tensors. If $(\mc{A}\n\mc{B})^{\core}=\mc{B}^{\core}\n\mc{A}^{\core}$, then the following holds:
 \begin{enumerate}
     \item[(a)] $\rg(\mc{A}\n\mc{B})\subseteq\rg(\mc{B}\n\mc{A})$;
     \item[(b)] $\mc{B}\n\mc{B}^{\core}\n\mc{A}^{\core}\in \mc{C}\{3,6\}$, where $\mc{C}=\mc{A}\n\mc{B}\n\mc{B}^{\core}$
 \end{enumerate}
 \end{theorem}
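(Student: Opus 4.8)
The plan is to isolate first a single range inclusion that drives both parts, and then to dispatch (b) by a direct verification and (a) by a Drazin detour. Writing $\mc{C}_0=\mc{A}\n\mc{B}$, the hypothesis reads $\mc{C}_0^{\core}=\mc{B}^{\core}\n\mc{A}^{\core}$, and since a core tensor shares its range with its core inverse (from conditions (6),(7) of Definition \ref{coredef}, $\mc{C}_0=\mc{C}_0^{\core}\n\mc{C}_0^2$ gives $\rg(\mc{C}_0)\subseteq\rg(\mc{C}_0^{\core})$ and $\mc{C}_0^{\core}=\mc{C}_0\n(\mc{C}_0^{\core})^2$ the reverse), I get $\rg(\mc{A}\n\mc{B})=\rg(\mc{C}_0^{\core})=\rg(\mc{B}^{\core}\n\mc{A}^{\core})\subseteq\rg(\mc{B}^{\core})=\rg(\mc{B})$. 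By Lemma \ref{lm2.7} this is the same as the \emph{working identity} $\mc{B}\n\mc{B}^{\core}\n\mc{A}\n\mc{B}=\mc{A}\n\mc{B}$ (the core inverse satisfies $\mc{B}\n\mc{B}^{\core}\n\mc{B}=\mc{B}$ and $\mc{B}^{\core}\n\mc{B}\n\mc{B}^{\core}=\mc{B}^{\core}$, as in Lemma \ref{lm2.11}). I will also use the elementary fact $\mc{A}^{\core}\n\mc{A}=\mc{A}^{\#}\n\mc{A}$, read off from Lemma \ref{lm2.5} via $\mc{A}\n\mc{A}^{(1,3)}\n\mc{A}=\mc{A}$, applied to both $\mc{A}$ and $\mc{A}\n\mc{B}$.

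For part (b), put $\mc{C}=\mc{A}\n\mc{B}\n\mc{B}^{\core}$ and $\mc{Y}=\mc{B}\n\mc{B}^{\core}\n\mc{A}^{\core}$. Condition (3) is immediate: $\mc{B}^{\core}\n\mc{B}\n\mc{B}^{\core}=\mc{B}^{\core}$ collapses $\mc{C}\n\mc{Y}$ to $\mc{A}\n\mc{B}\n\mc{B}^{\core}\n\mc{A}^{\core}=(\mc{A}\n\mc{B})\n(\mc{A}\n\mc{B})^{\core}$ by the hypothesis, which is Hermitian by condition (3) for the core tensor $\mc{A}\n\mc{B}$. For condition (6) I expand $\mc{Y}\n\mc{C}^2$ and collapse the interior string $\mc{B}\n\mc{B}^{\core}\n\mc{A}\n\mc{B}\n\mc{B}^{\core}$ to $\mc{A}\n\mc{B}\n\mc{B}^{\core}$ using the working identity (right--multiplied by $\mc{B}^{\core}$); this leaves $\mc{B}\n\mc{B}^{\core}\n(\mc{A}^{\core}\n\mc{A}^2)\n\mc{B}\n\mc{B}^{\core}$, whereupon condition (6) for $\mc{A}$, namely $\mc{A}^{\core}\n\mc{A}^2=\mc{A}$, and one further application of the working identity return $\mc{A}\n\mc{B}\n\mc{B}^{\core}=\mc{C}$. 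Hence $\mc{Y}\in\mc{C}\{3,6\}$.

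For part (a) the aim is to manufacture an idempotent lying inside $\rg(\mc{B}\n\mc{A})$ that fixes $\mc{A}\n\mc{B}$. Since $\mc{A}\n\mc{B}$ is core, $\mathrm{ind}(\mc{A}\n\mc{B})=1$ and $(\mc{A}\n\mc{B})^D=(\mc{A}\n\mc{B})^{\#}$, so Lemma \ref{lm2.4} gives $(\mc{B}\n\mc{A})^D=\mc{B}\n((\mc{A}\n\mc{B})^{\#})^2\n\mc{A}$ and therefore $\mc{E}:=(\mc{B}\n\mc{A})\n(\mc{B}\n\mc{A})^D=\mc{B}\n(\mc{A}\n\mc{B})^{\#}\n\mc{A}$. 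I then claim $\mc{E}\n(\mc{A}\n\mc{B})=\mc{A}\n\mc{B}$. Indeed, $\mc{A}^2\n\mc{B}=\mc{A}\n(\mc{A}\n\mc{B})=(\mc{A}\n\mc{B})\n(\mc{B}^{\core}\n\mc{A}\n\mc{B})$ by the working identity, so $\mc{A}^2\n\mc{B}$ lies in $\rg(\mc{A}\n\mc{B})$; as $(\mc{A}\n\mc{B})^{\#}$ and $(\mc{A}\n\mc{B})^{\core}$ agree after the common factor $\mc{A}\n\mc{B}$ (both equal $\mc{A}^{\#}\n\mc{A}$--type projector applied to $\mc{A}\n\mc{B}$), I may replace $(\mc{A}\n\mc{B})^{\#}\n\mc{A}^2\n\mc{B}$ by $(\mc{A}\n\mc{B})^{\core}\n\mc{A}^2\n\mc{B}$. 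Now the hypothesis $(\mc{A}\n\mc{B})^{\core}=\mc{B}^{\core}\n\mc{A}^{\core}$ followed by $\mc{A}^{\core}\n\mc{A}^2=\mc{A}$ and the working identity gives $\mc{E}\n(\mc{A}\n\mc{B})=\mc{B}\n\mc{B}^{\core}\n\mc{A}^{\core}\n\mc{A}^2\n\mc{B}=\mc{B}\n\mc{B}^{\core}\n\mc{A}\n\mc{B}=\mc{A}\n\mc{B}$. Consequently $\mc{A}\n\mc{B}=(\mc{B}\n\mc{A})\n\big[(\mc{B}\n\mc{A})^D\n(\mc{A}\n\mc{B})\big]$ exhibits $\mc{A}\n\mc{B}$ as a left multiple of $\mc{B}\n\mc{A}$, so Lemma \ref{lm2.7} yields $\rg(\mc{A}\n\mc{B})\subseteq\rg(\mc{B}\n\mc{A})$.

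The main obstacle is part (a): the hypothesis factors $(\mc{A}\n\mc{B})^{\core}$ with $\mc{B}^{\core}$ on the \emph{left}, so every naive rewriting of $\mc{A}\n\mc{B}$ produces a leading $\mc{B}^{\core}$, yielding only $\rg(\mc{A}\n\mc{B})\subseteq\rg(\mc{B})$ rather than the sought leading factor $\mc{B}\n\mc{A}$. The device that breaks this is Lemma \ref{lm2.4}, which supplies the factor $\mc{B}\n\mc{A}$ through the Drazin projector $\mc{E}$; the remaining delicate step is the interchange $(\mc{A}\n\mc{B})^{\#}\leftrightarrow(\mc{A}\n\mc{B})^{\core}$, legitimate only after $\rg(\mc{A}^2\n\mc{B})\subseteq\rg(\mc{A}\n\mc{B})$ is secured, which in turn rests on the preliminary inclusion $\rg(\mc{A}\n\mc{B})\subseteq\rg(\mc{B})$. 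Part (b), by contrast, is a routine chain of reductions once that same inclusion is available.
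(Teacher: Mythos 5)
Your proof is correct, and part (b) is essentially the paper's own argument: the same collapse of $\mc{C}\n\mc{Y}$ to $(\mc{A}\n\mc{B})\n(\mc{A}\n\mc{B})^{\core}$ for condition (3), and the same two reductions (the identity $\mc{B}\n\mc{B}^{\core}\n\mc{A}\n\mc{B}=\mc{A}\n\mc{B}$ together with $\mc{A}^{\core}\n\mc{A}^2=\mc{A}$) for condition (6); the paper merely uses the outer-inverse property of $\mc{B}^{\core}\n\mc{A}^{\core}$ at one spot where you apply the working identity a second time. Part (a), however, is a genuinely different and more elaborate route. The paper never invokes Lemma \ref{lm2.4}: starting from your working identity (which it obtains in one line, via $\mc{B}^{\core}=\mc{B}\n(\mc{B}^{\core})^2$, rather than through the range equalities and Lemma \ref{lm2.7}), it substitutes $\mc{A}=\mc{A}^{\core}\n\mc{A}^2$ to reach $\mc{A}\n\mc{B}=\mc{B}\n(\mc{A}\n\mc{B})^{\core}\n\mc{A}^2\n\mc{B}$ and then applies axiom (7) to $(\mc{A}\n\mc{B})^{\core}$ itself, namely $(\mc{A}\n\mc{B})^{\core}=\mc{A}\n\mc{B}\n\left((\mc{A}\n\mc{B})^{\core}\right)^2$, which produces the left factor $\mc{B}\n\mc{A}$ immediately. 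You reach exactly the same intermediate identity $\mc{B}\n(\mc{A}\n\mc{B})^{\core}\n\mc{A}^2\n\mc{B}=\mc{A}\n\mc{B}$, but embedded in the Drazin projector $\mc{E}=(\mc{B}\n\mc{A})\n(\mc{B}\n\mc{A})^{D}=\mc{B}\n(\mc{A}\n\mc{B})^{\#}\n\mc{A}$ manufactured via Cline's formula, and this forces you to supply the auxiliary facts $\mathrm{ind}(\mc{A}\n\mc{B})=1$, $(\mc{A}\n\mc{B})^{D}=(\mc{A}\n\mc{B})^{\#}$, the identity $\mc{T}^{\core}\n\mc{T}=\mc{T}^{\#}\n\mc{T}$ (correctly extracted from Lemma \ref{lm2.5}), and $\rg(\mc{A}^2\n\mc{B})\subseteq\rg(\mc{A}\n\mc{B})$ to legitimize the interchange of $(\mc{A}\n\mc{B})^{\#}$ with $(\mc{A}\n\mc{B})^{\core}$. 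All of these steps check out, so the detour is sound; what the paper's route buys is complete elimination of the group/Drazin apparatus, while yours isolates a reusable structural reason for the inclusion, that the Drazin projector of $\mc{B}\n\mc{A}$ fixes $\mc{A}\n\mc{B}$ on the left. One citation quibble: the inner-inverse identities $\mc{B}\n\mc{B}^{\core}\n\mc{B}=\mc{B}$ and $\mc{B}^{\core}\n\mc{B}\n\mc{B}^{\core}=\mc{B}^{\core}$ are not what Lemma \ref{lm2.11} asserts (it gives sufficient conditions for a tensor to \emph{be} the core inverse); they instead follow in one line from Definition \ref{coredef}, e.g. $\mc{B}\n\mc{B}^{\core}\n\mc{B}=\mc{B}\n(\mc{B}^{\core})^2\n\mc{B}^2=\mc{B}^{\core}\n\mc{B}^2=\mc{B}$, and you should say so rather than cite the lemma.
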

 \begin{proof}
 $(a)$ Let $(\mc{A}\n\mc{B})^{\core}=\mc{B}^{\core}\n\mc{A}^{\core}$. Now
 \begin{eqnarray}\label{eqar}
 \mc{A}\n\mc{B}&=&\mc{B}^{\core}\n\mc{A}^{\core}\n(\mc{A}\n\mc{B})^2=\mc{B}\n(\mc{B}^{\core})^2\n\mc{A}^{\core}\n(\mc{A}\n\mc{B})^2=\mc{B}\n\mc{B}^{\core}\n(\mc{A}\n\mc{B})\\
 \nonumber
 &=& \mc{B}\n\mc{B}^{\core}\n\mc{A}^{\core}\n\mc{A}^2\n\mc{B}=\mc{B}\n(\mc{A}\n\mc{B})^{\core}\n\mc{A}^2\n\mc{B}\\
 \nonumber
 &=& \mc{B}\n\mc{A}\n\mc{B}\n\left((\mc{A}\n\mc{B})^{\core}\right)^2\n\mc{A}^2\n\mc{B}.
 \end{eqnarray}
 This yields $\rg(\mc{A}\n\mc{B})\subseteq\rg(\mc{B}\n\mc{A})$.\\
 $(b)$ As $\mc{C}\n\mc{B}\n\mc{B}^{\core}\n\mc{A}^{\core}=\mc{A}\n\mc{B}\n\mc{B}^{\core}\n\mc{B}\n\mc{B}^{\core}\n\mc{A}^{\core}=\mc{A}\n\mc{B}\n\mc{B}^{\core}\n\mc{A}^{\core}=\mc{A}\n\mc{B}\n(\mc{A}\n\mc{B})^{\core}$. So we have $(\mc{C}\n\mc{B}\n\mc{B}^{\core}\n\mc{A}^{\core})^*=(\mc{A}\n\mc{B}\n(\mc{A}\n\mc{B})^{\core})^*=\mc{A}\n\mc{B}\n(\mc{A}\n\mc{B})^{\core}=\mc{C}\n\mc{B}\n\mc{B}^{\core}\n\mc{A}^{\core}$. Thus $\mc{B}\n\mc{B}^{\core}\n\mc{A}^{\core}\in\mc{C}\{3\}$. By using $\mc{A}\n\mc{B}=\mc{B}\n\mc{B}^{\core}\n(\mc{A}\n\mc{B})$ from part $(a)$, we obtain
 \begin{eqnarray*}
 \mc{B}\n\mc{B}^{\core}\n\mc{A}^{\core}\n\mc{C}^2&=& \mc{B}\n\mc{B}^{\core}\n\mc{A}^{\core}\n\mc{A}\n\mc{B}\n\mc{B}^{\core}\n\mc{A}^{\core}\n\mc{A}^2\n\mc{B}\n\mc{B}^{\core}\\
 &=&\mc{B}\n\mc{B}^{\core}\n\mc{A}^{\core}\n\mc{A}^2\n\mc{B}\n\mc{B}^{\core}=\mc{B}\n\mc{B}^{\core}\n\mc{A}\n\mc{B}\n\mc{B}^{\core}=\mc{A}\n\mc{B}\n\mc{B}^{\core}.
 \end{eqnarray*}
 Hence $\mc{B}\n\mc{B}^{\core}\n\mc{A}^{\core}\in\mc{C}\{6\}$ and completes the proof.
 \end{proof}
\begin{theorem}\label{thm3.5}
Let $\mc{A}$ and $\mc{B}\in\C^{\textbf{N}(n)\times \textbf{N}(n)}$ be two core tensors. If $\mc{A}^2=\mc{B}\n\mc{A}$, then
\begin{enumerate}
    \item[(a)] $ind(\mc{A}\n\mc{B})=1$ and $(\mc{A}\n\mc{B})^{\core}=\mc{B}^{\core}\n\mc{A}^{\core}$;
    \item[(b)] $ind(\mc{A}\n\mc{B}\n\mc{B}^{\core})=1$ and $(\mc{A}\n\mc{B}\n\mc{B}^{\core})^{\core}=\mc{B}\n\mc{B}^{\core}\n\mc{A}^{\core}$.
\end{enumerate}
\end{theorem}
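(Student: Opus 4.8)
The plan is to handle each part by first pinning down the index of the relevant product (so that Lemma~\ref{lm2.11} becomes applicable) and then verifying the three core-inverse equations of Lemma~\ref{lm2.11}$(b)$ for the proposed inverse. The engine for everything is a single consequence of the hypothesis: since $\operatorname{ind}(\mc{A})=1$ we have $\rg(\mc{A})=\rg(\mc{A}^2)$, and $\mc{A}^2=\mc{B}\n\mc{A}$ gives $\rg(\mc{A}^2)=\rg(\mc{B}\n\mc{A})\subseteq\rg(\mc{B})$; hence $\rg(\mc{A})\subseteq\rg(\mc{B})$. By Lemma~\ref{lm2.7} this yields a factorization $\mc{A}=\mc{B}\n\mc{V}$, and since $\rg(\mc{A}^{\core})=\rg(\mc{A})\subseteq\rg(\mc{B})$ also $\mc{A}^{\core}=\mc{B}\n\mc{V}'$ for some $\mc{V}'$. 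Feeding these through $\mc{B}\n\mc{B}^{\core}\n\mc{B}=\mc{B}$ and $\mc{B}^{\core}\n\mc{B}^2=\mc{B}$ I would record the ``absorption'' rules $\mc{B}\n\mc{B}^{\core}\n\mc{A}=\mc{A}$, $\mc{B}^{\core}\n\mc{B}\n\mc{A}=\mc{A}$ and $\mc{B}\n\mc{B}^{\core}\n\mc{A}^{\core}=\mc{A}^{\core}$, together with the power identity $(\mc{A}\n\mc{B})^2=\mc{A}\n(\mc{B}\n\mc{A})\n\mc{B}=\mc{A}^3\n\mc{B}$.

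For part $(a)$ I would first show $\operatorname{ind}(\mc{A}\n\mc{B})=1$, i.e. $\rg(\mc{A}\n\mc{B})=\rg((\mc{A}\n\mc{B})^2)$. The inclusions $\rg((\mc{A}\n\mc{B})^2)\subseteq\rg(\mc{A}\n\mc{B})\subseteq\rg(\mc{A})$ are immediate, so it suffices to prove $\rg(\mc{A})\subseteq\rg((\mc{A}\n\mc{B})^2)$. Using $\mc{A}=\mc{B}\n\mc{V}$ one gets $\mc{A}^4=\mc{A}^3\n\mc{B}\n\mc{V}=(\mc{A}\n\mc{B})^2\n\mc{V}$, whence $\rg(\mc{A})=\rg(\mc{A}^4)\subseteq\rg((\mc{A}\n\mc{B})^2)$ by Lemma~\ref{lm2.7}. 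With the index settled I would put $\mc{X}=\mc{B}^{\core}\n\mc{A}^{\core}$ and verify Lemma~\ref{lm2.11}$(b)$: the key simplification is $\mc{A}\n\mc{B}\n\mc{X}=\mc{A}\n(\mc{B}\n\mc{B}^{\core}\n\mc{A}^{\core})=\mc{A}\n\mc{A}^{\core}$, which is Hermitian by Definition~\ref{coredef}$(3)$, so $(\mc{A}\n\mc{B}\n\mc{X})^*=\mc{A}\n\mc{B}\n\mc{X}$; then $\mc{X}\n\mc{A}\n\mc{B}\n\mc{X}=\mc{B}^{\core}\n(\mc{A}^{\core}\n\mc{A}\n\mc{A}^{\core})=\mc{X}$, and $\mc{X}\n(\mc{A}\n\mc{B})^2=\mc{B}^{\core}\n(\mc{A}^{\core}\n\mc{A}^3)\n\mc{B}=\mc{B}^{\core}\n\mc{B}\n\mc{A}\n\mc{B}=\mc{A}\n\mc{B}$, using $\mc{A}^{\core}\n\mc{A}^3=\mc{A}^2=\mc{B}\n\mc{A}$ and $\mc{B}^{\core}\n\mc{B}\n\mc{A}=\mc{A}$. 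Lemma~\ref{lm2.11}$(b)$ then gives $\mc{X}=(\mc{A}\n\mc{B})^{\core}$.

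For part $(b)$ I would set $\mc{C}=\mc{A}\n\mc{B}\n\mc{B}^{\core}$. The rule $\mc{B}\n\mc{B}^{\core}\n\mc{A}=\mc{A}$ collapses powers to $\mc{C}^k=\mc{A}^k\n\mc{B}\n\mc{B}^{\core}$, and the same style of range computation (now via $\mc{A}=\mc{C}\n\mc{A}^{\core}\n\mc{A}$ and $\mc{A}=\mc{C}^2\n(\mc{A}^{\core})^2\n\mc{A}$) gives $\rg(\mc{C})=\rg(\mc{A})=\rg(\mc{C}^2)$, so $\operatorname{ind}(\mc{C})=1$. The crucial observation is that the claimed inverse simplifies, since $\mc{B}\n\mc{B}^{\core}\n\mc{A}^{\core}=\mc{A}^{\core}$; hence it suffices to prove $\mc{C}^{\core}=\mc{A}^{\core}$. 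With $\mc{Y}=\mc{A}^{\core}$ the three conditions read $\mc{C}\n\mc{Y}=\mc{A}\n\mc{A}^{\core}$ (Hermitian), $\mc{Y}\n\mc{C}\n\mc{Y}=\mc{A}^{\core}\n\mc{A}\n\mc{A}^{\core}=\mc{Y}$ and $\mc{Y}\n\mc{C}^2=(\mc{A}^{\core}\n\mc{A}^2)\n\mc{B}\n\mc{B}^{\core}=\mc{A}\n\mc{B}\n\mc{B}^{\core}=\mc{C}$, so Lemma~\ref{lm2.11}$(b)$ yields $\mc{C}^{\core}=\mc{A}^{\core}=\mc{B}\n\mc{B}^{\core}\n\mc{A}^{\core}$.

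I expect the index computations to be the only genuine obstacle. The verification of the core-inverse equations is a short chain of substitutions once the absorption rules are in hand, but $\operatorname{ind}(\mc{A}\n\mc{B})=1$ cannot be skipped, because Lemma~\ref{lm2.11} presupposes that the product is a core tensor; moreover Theorem~\ref{thm3.1} does not apply here, since its commutation hypothesis $\mc{A}\n\mc{B}\n\mc{B}^{\core}=\mc{B}\n\mc{B}^{\core}\n\mc{A}$ would (via $\mc{B}\n\mc{B}^{\core}\n\mc{A}=\mc{A}$) force $\mc{A}\n\mc{B}\n\mc{B}^{\core}=\mc{A}$, i.e. $\rg(\mc{A}^*)\subseteq\rg(\mc{B})$, which $\mc{A}^2=\mc{B}\n\mc{A}$ does not supply. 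Thus the range identities driven by $\rg(\mc{A})\subseteq\rg(\mc{B})$ are exactly what make the direct verification go through.
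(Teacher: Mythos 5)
Your proof is correct, but it takes a noticeably different route from the paper's. The paper never passes through the range inclusion $\rg(\mc{A})\subseteq\rg(\mc{B})$; instead it manipulates group-inverse powers directly (for the index: $\mc{A}\n\mc{B}=(\mc{A}\n\mc{B})^2\n(\mc{A}^{\#})^3\n\mc{B}$, exploiting $\mc{A}^4=(\mc{B}\n\mc{A})^2$) and then derives the identities $\mc{A}=\mc{B}^{\core}\n\mc{A}^2$, $\mc{B}\n\mc{A}^{\core}=\mc{A}\n\mc{A}^{\core}$ and $\mc{A}^{\core}=\mc{B}^{\core}\n\mc{B}\n\mc{A}^{\core}$ before reaching the same pivotal simplification $\mc{A}\n\mc{B}\n\mc{B}^{\core}\n\mc{A}^{\core}=\mc{A}\n\mc{A}^{\core}$ that you get in one line from your absorption rule $\mc{B}\n\mc{B}^{\core}\n\mc{A}^{\core}=\mc{A}^{\core}$ (note your rule has the projector $\mc{B}\n\mc{B}^{\core}$ on the left, whereas the paper's Eq.~(12) uses $\mc{B}^{\core}\n\mc{B}$; both are valid, but yours is the one that makes part $(b)$ collapse). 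The divergence is greatest in part $(b)$: the paper proves $ind(\mc{C})=1$ by a long chain producing $\mc{C}=\mc{C}^2\n\mc{Z}$ with auxiliary tensors $\mc{U},\mc{Z}$, then leans on part $(a)$ together with Theorem~\ref{thm3.4}$(b)$ to supply $\mc{B}\n\mc{B}^{\core}\n\mc{A}^{\core}\in\mc{C}\{3,6\}$ and claims it remains to check $\{7\}$ — but the displayed computation, $\mc{B}\n\mc{B}^{\core}\n\mc{A}^{\core}\n\mc{C}^2=\mc{C}$, is condition $(6)$ again rather than $(7)$ ($\mc{C}\n\mc{X}^2=\mc{X}$), so as written the paper's verification of $(b)$ is incomplete. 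Your observation that the claimed inverse equals $\mc{A}^{\core}$ outright, followed by a self-contained check of all three conditions of Lemma~\ref{lm2.11}$(b)$ against $\mc{C}^k=\mc{A}^k\n\mc{B}\n\mc{B}^{\core}$, sidesteps Theorem~\ref{thm3.4} entirely, shortens the index argument to $\rg(\mc{C})=\rg(\mc{A})=\rg(\mc{C}^2)$, and quietly repairs that slip. What the paper's approach buys is independence from the factorization machinery of Lemma~\ref{lm2.7}; what yours buys is a cleaner, structurally transparent proof and the sharper statement $(\mc{A}\n\mc{B}\n\mc{B}^{\core})^{\core}=\mc{A}^{\core}$.
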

\begin{proof}
$(a)$ It is trivial that $\rg((\mc{A}\n\mc{B})^2)\subseteq\rg(\mc{A}\n\mc{B})$. Since $\mc{A}\n\mc{B}=\mc{A}^5\n(\mc{A}^{\#})^4\n\mc{B}=\mc{A}\n(\mc{B}\n\mc{A})^2\n(\mc{A}^{\#})^4\n\mc{B}=(\mc{A}\n\mc{B})^2\n(\mc{A}^{\#})^3\n\mc{B}$. It follows that $\rg(\mc{A}\n\mc{B})\subseteq\rg((\mc{A}\n\mc{B})^2)$ and hence $ind(\mc{A}\n\mc{B})=1$. From the condition $\mc{A}^2=\mc{B}\n\mc{A}$, we have
\begin{equation}\label{eq11}
\mc{A}=\mc{A}^2\n\mc{A}^{\#}=\mc{B}\n\mc{A}\n\mc{A}^{\#}=\mc{B}^{\core}\n\mc{B}^2\n\mc{A}\n\mc{A}^{\#}=\mc{B}^{\core}\n\mc{B}\n\mc{A}^2\n\mc{A}^{\#}= \mc{B}^{\core}\n\mc{A}^2,
\end{equation}
and
$\mc{A}\n\mc{A}^{\core}=\mc{A}^2\n(\mc{A}^{\core})^2=\mc{B}\n\mc{A}\n(\mc{A}^{\core})^2=\mc{B}\n\mc{A}^{\core}$. From Eq. \eqref{eq11}, we get
\begin{equation}\label{eq12}
 \mc{A}^{\core}=\mc{A}\n( \mc{A}^{\core})^2=\mc{B}^{\core}\n\mc{A}^2\n( \mc{A}^{\core})^2=\mc{B}^{\core}\n\mc{B}\n\mc{A}\n( \mc{A}^{\core})^2=\mc{B}^{\core}\n\mc{B}\n\mc{A}^{\core},
\end{equation}
\begin{equation*}
 \mbox{and } \mc{B}^{\core}\n\mc{A}^{\core}\n(\mc{A}\n\mc{B})^2= \mc{B}^{\core}\n\mc{A}^{\core}\n\mc{A}^3\n\mc{B}=\mc{B}^{\core}\n\mc{A}^2\n\mc{B}=\mc{A}\n\mc{B}.
\end{equation*}
Further from Eq. \eqref{eq11} and Eq. \eqref{eq12}, we have
\begin{eqnarray}\label{eq13}
\nonumber
\mc{A}\n\mc{B}\n\mc{B}^{\core}\n\mc{A}^{\core}&=&\mc{A}\n\mc{B}\n\mc{B}^{\core}\n\mc{B}^{\core}\n\mc{B}\n\mc{A}^{\core}=\mc{A}\n\mc{B}^{\core}\n\mc{B}\n\mc{A}^{\core}=\mc{A}\n\mc{B}^{\core}\n\mc{A}\n\mc{A}^{\core}\\
&=&\mc{A}\n\mc{B}^{\core}\n\mc{A}^2\n(\mc{A}^{\core})^2=\mc{A}^2\n(\mc{A}^{\core})^2=\mc{A}\n\mc{A}^{\core}.
\end{eqnarray}
Applying Eq. \eqref{eq13}, we obtain
\begin{equation*}
    (\mc{A}\n\mc{B}\n\mc{B}^{\core}\n\mc{A}^{\core})^*=(\mc{A}\n\mc{A}^{\core})^*=\mc{A}\n\mc{A}^{\core}=\mc{A}\n\mc{B}\n\mc{B}^{\core}\n\mc{A}^{\core},
\end{equation*}
and $ \mc{B}^{\core}\n\mc{A}^{\core}\n\mc{A}\n\mc{B}\n\mc{B}^{\core}\n\mc{A}^{\core} =\mc{B}^{\core}\n\mc{A}^{\core}\n\mc{A}\n\mc{A}^{\core}= \mc{B}^{\core}\n\mc{A}^{\core}$. Therefore, $\mc{B}^{\core}\n\mc{A}^{\core}$ is the core inverse of $\mc{A}\n\mc{B}$.\\
$(b)$ Clearly $\rg((\mc{A}\n\mc{B}\n\mc{B}^{\core})^2)\subseteq\rg(\mc{A}\n\mc{B}\n\mc{B}^{\core})$. From $\rg(\mc{A}\n\mc{B})=\rg((\mc{A}\n\mc{B})^2)$, we have $\mc{A}\n\mc{B}=(\mc{A}\n\mc{B})^\n\mc{U}$ for some $\mc{U}\in\C^{\textbf{N}(n)\times \textbf{N}(n)}$. Now
\begin{eqnarray*}
  \mc{A}\n\mc{B}\n\mc{B}^{\core} &=& \mc{A}\n\mc{B}\n\mc{A}\n\mc{B}\n\mc{U}\n\mc{B}^{\core}=\mc{A}\n\mc{B}\n\mc{B}^{\core}\n\mc{A}^{\core}\n(\mc{A}\n\mc{B})^2\n\mc{U}\n\mc{B}^{\core} \\
   &=& \mc{A}\n\mc{B}\n\mc{B}^{\core}\n\mc{A}^{\core}\n\mc{A}^3\n\mc{B}\n\mc{U}\n\mc{B}^{\core}=\mc{A}\n\mc{B}\n\mc{B}^{\core}\n\mc{A}^{2}\n\mc{B}\n\mc{U}\n\mc{B}^{\core}\\
   &=&\mc{A}\n\mc{B}\n\mc{B}^{\core}\n\mc{A}\n\mc{B}^{\core}\n\mc{A}^{2}\n\mc{B}\n\mc{U}\n\mc{B}^{\core}\\
   &=&\mc{A}\n\mc{B}\n\mc{B}^{\core}\n\mc{A}\n\mc{B}\n(\mc{B}^{\core})^2\n\mc{A}^{2}\n\mc{B}\n\mc{U}\n\mc{B}^{\core}
   \\
   &=&(\mc{A}\n\mc{B}\n\mc{B}^{\core})^2\n\mc{Z},\mbox{ where }\mc{Z}=\mc{B}^{\core}\n\mc{A}^{2}\n\mc{B}\n\mc{U}\n\mc{B}^{\core}.
\end{eqnarray*}
Thus $\rg(\mc{A}\n\mc{B}\n\mc{B}^{\core})\subseteq \rg((\mc{A}\n\mc{B}\n\mc{B}^{\core})^2)$ and hence $ind(\mc{A}\n\mc{B}\n\mc{B}^{\core})=1$. In view of part $(a)$ and Theorem \ref{thm3.4}, it is sufficient to show only $\mc{B}\n\mc{B}^{\core}\n\mc{A}^{\core}\in \mc{C}\{7\}$, where $\mc{C}=\mc{A}\n\mc{B}\n\mc{B}^{\core}$.  By using Eq. \eqref{eqar}, we obtain
\begin{eqnarray*}
\mc{B}\n\mc{B}^{\core}\n\mc{A}^{\core}\n\mc{C}^2&=&\mc{B}\n\mc{B}^{\core}\n\mc{A}^{\core}\n\mc{A}\n\mc{B}\n\mc{B}^{\core}\n\mc{A}\n\mc{B}\n\mc{B}^{\core}\\
&=&\mc{B}\n(\mc{A}\n\mc{B})^{\core}\n\mc{A}\n\mc{B}\n(\mc{A}\n\mc{B})^{\core}\n\mc{A}^2\n\mc{B}\n\mc{B}^{\core}\\
&=& \mc{B}\n\mc{B}^{\core}\n\mc{A}^{\core}\n\mc{A}^2\n\mc{B}\n\mc{B}^{\core}=\mc{B}\n\mc{B}^{\core}\n\mc{A}\n\mc{B}\n\mc{B}^{\core}\\
&=&\mc{A}\n\mc{B}\n\mc{B}^{\core}=\mc{C}.
\end{eqnarray*}
Thus $\mc{B}\n\mc{B}^{\core}\n\mc{A}^{\core}\in \mc{C}\{7\}$.
\end{proof}
\begin{theorem}\label{thm3.6}
Let $\mc{A},~\mc{B}, \mc{A}\n\mc{B}\in\C^{\textbf{N}(n)\times \textbf{N}(n)}$   be core tensors and  $\rg(\mc{A}^*\n\mc{B})=\rg(\mc{B}\n\mc{A}^*)$. Then   $(\mc{A}\n\mc{B})^{\core}=\mc{B}^{\core}\n\mc{A}^{\core}$ if and only if the following holds
\begin{enumerate}
    \item[(a)] $\rg(\mc{B}^{\core}\n\mc{A})\subseteq\rg(\mc{A}\n\mc{B})\subseteq\rg(\mc{B}\n\mc{A})$.
    \item[(b)] $\mc{A}\n\mc{A}^{\core}$ and $\mc{B}\n\mc{B}^{\core}$ are commute. 
\end{enumerate}
\end{theorem}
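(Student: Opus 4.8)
The plan is to establish the two implications separately, writing throughout $\mc{C}=\mc{A}\n\mc{B}$, $\mc{P}=\mc{A}\n\mc{A}^{\core}$ and $\mc{Q}=\mc{B}\n\mc{B}^{\core}$. From Lemma~\ref{lm2.5} one checks that $\mc{A}\n\mc{A}^{\core}=\mc{A}\n\mc{A}^{\dagger}$ and $\mc{B}\n\mc{B}^{\core}=\mc{B}\n\mc{B}^{\dagger}$, so $\mc{P}$ and $\mc{Q}$ are Hermitian idempotents, i.e. the orthogonal projectors onto $\rg(\mc{A})$ and $\rg(\mc{B})$; recall that two such projectors commute precisely when their product is Hermitian. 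The two recurring tools are the defining identities $(3),(6),(7)$ of the core inverse and the translation of range inclusions into factorizations furnished by Lemma~\ref{lm2.7}.

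For necessity, assume $(\mc{A}\n\mc{B})^{\core}=\mc{B}^{\core}\n\mc{A}^{\core}$. The inclusion $\rg(\mc{A}\n\mc{B})\subseteq\rg(\mc{B}\n\mc{A})$ in $(a)$ is already Theorem~\ref{thm3.4}$(a)$. For the other inclusion in $(a)$ I use $(6)$ for $\mc{A}$ to write $\mc{B}^{\core}\n\mc{A}=\mc{B}^{\core}\n\mc{A}^{\core}\n\mc{A}^{2}$ and $(7)$ for $\mc{C}$ to write $\mc{B}^{\core}\n\mc{A}^{\core}=\mc{C}^{\core}=\mc{C}\n(\mc{C}^{\core})^{2}=(\mc{A}\n\mc{B})\n(\mc{B}^{\core}\n\mc{A}^{\core})^{2}$; substituting exhibits $\mc{B}^{\core}\n\mc{A}=(\mc{A}\n\mc{B})\n\mc{W}$ with $\mc{W}=(\mc{B}^{\core}\n\mc{A}^{\core})^{2}\n\mc{A}^{2}$, so Lemma~\ref{lm2.7} gives $\rg(\mc{B}^{\core}\n\mc{A})\subseteq\rg(\mc{A}\n\mc{B})$. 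For $(b)$, property $(3)$ for $\mc{C}$ says that $\mc{C}\n\mc{C}^{\core}=\mc{A}\n\mc{B}\n\mc{B}^{\core}\n\mc{A}^{\core}=\mc{A}\n\mc{Q}\n\mc{A}^{\core}$ is Hermitian; I then invoke the range hypothesis $\rg(\mc{A}^*\n\mc{B})=\rg(\mc{B}\n\mc{A}^*)$ to transfer this symmetry from $\mc{A}\n\mc{Q}\n\mc{A}^{\core}$ to the product $\mc{P}\n\mc{Q}$, concluding that $\mc{P}\n\mc{Q}$ is Hermitian and hence $\mc{P}\n\mc{Q}=(\mc{P}\n\mc{Q})^{*}=\mc{Q}^{*}\n\mc{P}^{*}=\mc{Q}\n\mc{P}$.

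For sufficiency, assume $(a)$, $(b)$ and the range hypothesis, and verify that $\mc{X}:=\mc{B}^{\core}\n\mc{A}^{\core}$ meets the three requirements of Lemma~\ref{lm2.11}$(b)$ for $\mc{C}$, namely $\mc{X}\n\mc{C}\n\mc{X}=\mc{X}$, $(\mc{C}\n\mc{X})^{*}=\mc{C}\n\mc{X}$ and $\mc{X}\n\mc{C}^{2}=\mc{C}$. Lemma~\ref{lm2.7} turns $(a)$ into factorizations $\mc{A}\n\mc{B}=\mc{B}\n\mc{A}\n\mc{S}$ and $\mc{B}^{\core}\n\mc{A}=(\mc{A}\n\mc{B})\n\mc{T}$, and the range hypothesis into mutual factorizations of $\mc{A}^*\n\mc{B}$ and $\mc{B}\n\mc{A}^*$, while the commutativity $\mc{P}\n\mc{Q}=\mc{Q}\n\mc{P}$ from $(b)$ lets the two projectors be interchanged inside products. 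Feeding these, together with $(6),(7)$ for $\mc{A}$ and $\mc{B}$, into the three expressions above and simplifying should collapse $\mc{X}\n\mc{C}^{2}$ to $\mc{C}$, make $\mc{C}\n\mc{X}$ Hermitian, and reduce $\mc{X}\n\mc{C}\n\mc{X}$ to $\mc{X}$. An alternative route is to verify instead the two conditions of Theorem~\ref{thm3.3}, whose second condition is precisely a Hermitian-symmetry statement about $\mc{A}\n\mc{B}\n\mc{B}^{\core}\n\mc{A}^{\core}$ that $(b)$ is tailored to supply.

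The main obstacle, common to both directions, is extracting usable algebraic identities from the bare range equality $\rg(\mc{A}^*\n\mc{B})=\rg(\mc{B}\n\mc{A}^*)$. Commutativity of the orthogonal projectors $\mc{P}$ and $\mc{Q}$ only lets one interchange $\mc{A}\n\mc{A}^{\dagger}$ with $\mc{B}\n\mc{B}^{\dagger}$, not $\mc{A}$ (or $\mc{A}^*$) with $\mc{B}$ themselves; it is exactly the range hypothesis that is meant to license the latter exchange, and pinning down the precise factorizations it yields --- and checking that, combined with $(b)$, they deliver the Hermitian-symmetry condition $(3)$ for $\mc{A}\n\mc{B}$ --- is the delicate part. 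Once that exchange is in hand, identities $(6)$ and $(7)$ follow mechanically from the factorizations of $(a)$ and the core identities for $\mc{A}$ and $\mc{B}$.
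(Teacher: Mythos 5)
Your outline reproduces the skeleton of the paper's own argument---Theorem~\ref{thm3.4} for $\rg(\mc{A}\n\mc{B})\subseteq\rg(\mc{B}\n\mc{A})$, the factorization $\mc{B}^{\core}\n\mc{A}=\mc{A}\n\mc{B}\n\mc{W}$ with $\mc{W}=(\mc{B}^{\core}\n\mc{A}^{\core})^2\n\mc{A}^2$ (this block you do execute correctly, and it matches the paper verbatim), projector commutativity for $(b)$, and a Lemma~\ref{lm2.11} verification for sufficiency---but at the two decisive points you stop at a statement of intent, and the step you yourself label ``the delicate part'' is precisely what the paper's proof consists of. Concretely, for $(b)$ in the necessity direction you must first convert the hypothesis $\rg(\mc{A}^*\n\mc{B})=\rg(\mc{B}\n\mc{A}^*)$, via Lemma~\ref{lm2.7} and a conjugate transpose, into the identity $\mc{B}^*\n\mc{A}=\mc{U}\n\mc{A}\n\mc{B}^*=\mc{U}\n\mc{A}\n(\mc{B}\n\mc{B}^{\core}\n\mc{B})^*=\mc{B}^*\n\mc{A}\n\mc{B}\n\mc{B}^{\core}$ (the paper's Eq.~\eqref{eq14}); combining this with $\mc{A}\n\mc{B}=\mc{B}\n\mc{B}^{\core}\n\mc{A}\n\mc{B}$ (Eq.~\eqref{eqar}) yields the bridge
\[
\mc{B}\n\mc{B}^{\core}\n\mc{A}\n\mc{A}^{\core}=(\mc{B}^{\core})^*\n\mc{B}^*\n\mc{A}\n\mc{B}\n\mc{B}^{\core}\n\mc{A}^{\core}=\mc{A}\n\mc{B}\n\mc{B}^{\core}\n\mc{A}^{\core},
\]
i.e.\ in your notation $\mc{Q}\n\mc{P}=\mc{C}\n\mc{C}^{\core}$, and only then does Hermitian-ness of $\mc{C}\n\mc{C}^{\core}$ transfer to $\mc{Q}\n\mc{P}$ and give $(b)$. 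Without this bridge, knowing that $\mc{A}\n\mc{B}\n\mc{B}^{\core}\n\mc{A}^{\core}$ is Hermitian says nothing about $\mc{P}\n\mc{Q}$, since---as you correctly observe---projector algebra alone cannot move $\mc{A}^{\core}$ past $\mc{B}\n\mc{B}^{\core}$. Your proposal names this obstacle but does not overcome it.

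The sufficiency direction has the same gap plus a wrinkle you do not flag: the bridge identity must be \emph{re-derived} there, because $(b)$ alone gives $\mc{P}\n\mc{Q}=\mc{Q}\n\mc{P}$ but not that either equals $\mc{A}\n\mc{B}\n\mc{B}^{\core}\n\mc{A}^{\core}$, which is what makes $(\mc{C}\n\mc{X})^*=\mc{C}\n\mc{X}$. The paper does this by first extracting $\mc{A}\n\mc{B}=\mc{B}\n\mc{B}^{\core}\n\mc{A}\n\mc{B}$ from $\rg(\mc{A}\n\mc{B})\subseteq\rg(\mc{B}\n\mc{A})$ and then repeating the Eq.~\eqref{eq14} manipulation. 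Your ``feeding these in should collapse'' covers exactly the computations the paper then performs: $\mc{A}\n\mc{B}\n(\mc{B}^{\core}\n\mc{A}^{\core})^2=\mc{B}^{\core}\n\mc{A}^{\core}$, using the factorization $\mc{B}^{\core}\n\mc{A}=\mc{A}\n\mc{B}\n\mc{V}$ together with $\mc{B}\n\mc{B}^{\core}\n\mc{B}^{\core}=\mc{B}\n(\mc{B}^{\core})^2=\mc{B}^{\core}$ and the commutation from $(b)$, and $\mc{A}\n\mc{B}=\mc{A}\n\mc{B}\n\mc{B}^{\core}\n\mc{A}^{\core}\n\mc{A}\n\mc{B}$, which with the Hermitian condition verify Lemma~\ref{lm2.11}$(a)$. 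Your alternative plan via Lemma~\ref{lm2.11}$(b)$ would work equally well in principle, but would require proving $\mc{B}^{\core}\n\mc{A}^{\core}\n(\mc{A}\n\mc{B})^2=\mc{A}\n\mc{B}$, which is likewise not done. In short: the strategy is the right one and one of the four computational blocks is complete, but the proof is genuinely incomplete at exactly the steps where the hypothesis $\rg(\mc{A}^*\n\mc{B})=\rg(\mc{B}\n\mc{A}^*)$ has to do its work.
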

\begin{proof}
Let $(\mc{A}\n\mc{B})^{\core}=\mc{B}^{\core}\n\mc{A}^{\core}$. By Theorem \ref{thm3.4}, we have $\rg(\mc{A}\n\mc{B})\subseteq\rg(\mc{B}\n\mc{A})$. Now
\begin{eqnarray*}
\mc{B}^{\core}\n\mc{A}=\mc{B}^{\core}\n\mc{A}^{\core}\n\mc{A}^2=(\mc{A}\n\mc{B})^{\core}\n\mc{A}^2=\mc{A}\n\mc{B}\n((\mc{A}\n\mc{B})^{\core})^2\n\mc{A}^2.
\end{eqnarray*}
This implies $\rg(\mc{B}^{\core}\n\mc{A})\subseteq\rg(\mc{A}\n\mc{B})$. Since $\rg(\mc{A}^*\n\mc{B})=\rg(\mc{B}\n\mc{A}^*)$, it gives $\mc{B}^*\n\mc{A}=\mc{U}\n\mc{A}\n\mc{B}^*$ for some $\mc{U}\in \C^{\textbf{N}(n)\times \textbf{N}(n)}$. Applying this, we get 
\begin{equation}\label{eq14}
 \mc{B}^*\n\mc{A}=\mc{U}\n\mc{A}\n\mc{B}^*=\mc{U}\n\mc{A}\n(\mc{B}\n\mc{B}^{\core}\n\mc{B})^*=\mc{B}^*\n\mc{A}\n\mc{B}\n\mc{B}^{\core}.
\end{equation}
From Eq. \eqref{eqar} and Eq. \eqref{eq14}, we obtain
\begin{eqnarray*}
\mc{B}\n\mc{B}^{\core}\n\mc{A}\n\mc{A}^{\core}&=&(\mc{B}^{\core})^*\n\mc{B}^{*}\n\mc{A}\n\mc{A}^{\core}=(\mc{B}^{\core})^*\n\mc{B}^*\n\mc{A}\n\mc{B}\n\mc{B}^{\core}\n\mc{A}^{\core}\\
&=&\mc{B}\n\mc{B}^{\core}\n\mc{A}\n\mc{B}\n\mc{B}^{\core}\n\mc{A}^{\core}=\mc{A}\n\mc{B}\n\mc{B}^{\core}\n\mc{A}^{\core}\\
&=&(\mc{A}\n\mc{B}\n\mc{B}^{\core}\n\mc{A}^{\core})^*=(\mc{B}\n\mc{B}^{\core}\n\mc{A}\n\mc{A}^{\core})^*=\mc{A}\n\mc{A}^{\core}\n\mc{B}\n\mc{B}^{\core}.
\end{eqnarray*}
Conversely, let $\rg(\mc{A}\n\mc{B})\subseteq\rg(\mc{B}\n\mc{A})$. This yields $\mc{A}\n\mc{B}=\mc{B}\n\mc{A}\n\mc{U}$ for some $\mc{U}$. Now $\mc{A}\n\mc{B}=\mc{B}\n\mc{A}\n\mc{U}=\mc{B}\n\mc{B}^{\core}\n\mc{B}\n\mc{A}\n\mc{U}=\mc{B}\n\mc{B}^{\core}\n\mc{A}\n\mc{B}$. Applying this along with $\rg(\mc{A}^*\n\mc{B})=\rg(\mc{B}\n\mc{A}^*)$, we again obtain $\mc{B}\n\mc{B}^{\core}\n\mc{A}\n\mc{A}^{\core}=\mc{A}\n\mc{B}\n\mc{B}^{\core}\n\mc{A}^{\core}$. By using $\rg(\mc{B}^{\core}\n\mc{A})\subseteq\rg(\mc{A}\n\mc{B})$, we have 
\begin{eqnarray*}
\mc{A}\n\mc{B}\n(\mc{B}^{\core}\n\mc{A}^{\core})^2&=&\mc{B}\n\mc{B}^{\core}\n\mc{A}\n\mc{A}^{\core}\n\mc{B}^{\core}\n\mc{A}^{\core}=\mc{A}\n\mc{A}^{\core}\n\mc{B}\n\mc{B}^{\core}\n\mc{B}^{\core}\n\mc{A}^{\core}\\
&=&\mc{A}\n\mc{A}^{\core}\n(\mc{B}^{\core}\n\mc{A})\n(\mc{A}^{\core})^2=\mc{A}\n\mc{A}^{\core}\n(\mc{A}\n\mc{B}\n\mc{V})\n(\mc{A}^{\core})^2\\
&=&(\mc{A}\n\mc{B}\n\mc{V})\n(\mc{A}^{\core})^2=(\mc{B}^{\core}\n\mc{A})\n(\mc{A}^{\core})^2=\mc{B}^{\core}\n\mc{A}^{\core}.
\end{eqnarray*}
Further, $(\mc{A}\n\mc{B}\n\mc{B}^{\core}\n\mc{A}^{\core})^*=\mc{B}\n\mc{B}^{\core}\n\mc{A}\n\mc{A}^{\core}=\mc{A}\n\mc{B}\n\mc{B}^{\core}\n\mc{A}^{\core}$ and\\ $\mc{A}\n\mc{B}=\mc{B}\n\mc{B}^{\core}\n\mc{A}\n\mc{B}=\mc{B}\n\mc{B}^{\core}\n\mc{A}\n\mc{A}^{\core}\n\mc{A}\n\mc{B}=\mc{A}\n\mc{B}\n\mc{B}^{\core}\n\mc{A}^{\core}\n\mc{A}\n\mc{B}$. By Lemma \ref{lm2.11} $(a)$, $\mc{B}^{\core}\n\mc{A}^{\core}$ is the core inverse of $(\mc{A}\n\mc{B})$.
\end{proof}

On the basis of Theorem \ref{thm3.4} and Theorem \ref{thm3.6}, we state following result as a corollary.
\begin{corollary}\label{cor3.10}
Let $\mc{A},~\mc{B}\in\C^{\textbf{N}(n)\times \textbf{N}(n)}$ be core tensors. If $\rg(\mc{A}^*\n\mc{B})=\rg(\mc{B}\n\mc{A}^*)$ and $(\mc{A}\n\mc{B})^{\core}=\mc{B}^{\core}\n\mc{A}^{\core}$, then $(\mc{A}\n\mc{B}\n\mc{B}^{\core})^{\core}=\mc{B}\n\mc{B}^{\core}\n\mc{A}^{\core}=(\mc{B}\n\mc{B}^{\core})^{\core}\n\mc{A}^{\core}$.
\end{corollary}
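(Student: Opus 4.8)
The plan is to recognize $\mc{Y}:=\mc{B}\n\mc{B}^{\core}\n\mc{A}^{\core}$ as the core inverse of $\mc{C}:=\mc{A}\n\mc{B}\n\mc{B}^{\core}$ by checking directly the three defining equations (3), (6), (7) of Definition~\ref{coredef}; since these equations determine the core inverse uniquely, this yields the first equality and simultaneously certifies that $\mc{C}$ is core invertible, so no separate index-one argument is needed. The second equality then comes essentially for free: the corollary immediately following Theorem~\ref{thm3.1}, applied with $\mc{B}$ in place of $\mc{A}$, gives $(\mc{B}\n\mc{B}^{\core})^{\core}=\mc{B}\n\mc{B}^{\core}$, and right-multiplying by $\mc{A}^{\core}$ converts $(\mc{B}\n\mc{B}^{\core})^{\core}\n\mc{A}^{\core}$ into $\mc{B}\n\mc{B}^{\core}\n\mc{A}^{\core}$.

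The hypotheses $\rg(\mc{A}^*\n\mc{B})=\rg(\mc{B}\n\mc{A}^*)$ and $(\mc{A}\n\mc{B})^{\core}=\mc{B}^{\core}\n\mc{A}^{\core}$ are exactly those under which both Theorem~\ref{thm3.4} and Theorem~\ref{thm3.6} apply, so I may use their conclusions freely. Theorem~\ref{thm3.4}(b) asserts $\mc{Y}\in\mc{C}\{3,6\}$, which is precisely equations (3) and (6), namely $(\mc{C}\n\mc{Y})^*=\mc{C}\n\mc{Y}$ and $\mc{Y}\n\mc{C}^2=\mc{C}$. Thus the entire burden reduces to verifying equation (7), that is $\mc{C}\n\mc{Y}^2=\mc{Y}$.

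To establish (7) I would set $\mc{P}:=\mc{A}\n\mc{A}^{\core}$ and $\mc{Q}:=\mc{B}\n\mc{B}^{\core}$. These are idempotent (by $\mc{A}\n\mc{A}^{\core}\n\mc{A}=\mc{A}$ and $\mc{B}\n\mc{B}^{\core}\n\mc{B}=\mc{B}$) and, by Theorem~\ref{thm3.6}(b), they commute: $\mc{P}\n\mc{Q}=\mc{Q}\n\mc{P}$. Using $\mc{Q}^2=\mc{Q}$ one finds $\mc{C}\n\mc{Y}=\mc{A}\n\mc{B}\n\mc{B}^{\core}\n\mc{A}^{\core}$, and the chain inside the proof of Theorem~\ref{thm3.6} identifies this with $\mc{P}\n\mc{Q}$. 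Then, writing $\mc{Y}=\mc{Q}\n\mc{A}^{\core}$,
\[
\mc{C}\n\mc{Y}^2=(\mc{C}\n\mc{Y})\n\mc{Y}=\mc{P}\n\mc{Q}\n\mc{Q}\n\mc{A}^{\core}=\mc{P}\n\mc{Q}\n\mc{A}^{\core}=\mc{Q}\n\mc{P}\n\mc{A}^{\core}=\mc{Q}\n\mc{A}\n(\mc{A}^{\core})^2=\mc{Q}\n\mc{A}^{\core}=\mc{Y},
\]
where the fourth equality is the commutativity $\mc{P}\n\mc{Q}=\mc{Q}\n\mc{P}$ and the last two equalities use equation (7) for $\mc{A}$, that is $\mc{A}\n(\mc{A}^{\core})^2=\mc{A}^{\core}$. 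With (3), (6), (7) all in hand, Definition~\ref{coredef} forces $\mc{C}^{\core}=\mc{Y}$, which is the first equality.

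The step I expect to be the main obstacle is this verification of (7), since it is the only place where the factor $\mc{A}^{\core}$ must be transported past the projector $\mc{B}\n\mc{B}^{\core}$; the commutativity of the two projectors $\mc{P}$ and $\mc{Q}$ furnished by Theorem~\ref{thm3.6}(b) is exactly the lever that makes this possible. A secondary bookkeeping point is the identity $\mc{C}\n\mc{Y}=\mc{P}\n\mc{Q}$, which I would not re-derive but quote from the computation already carried out in the proof of Theorem~\ref{thm3.6}, since it holds verbatim under the present hypotheses.
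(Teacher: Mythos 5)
Your proposal is correct and follows essentially the paper's own route: invoke Theorem \ref{thm3.4}(b) to get $\mc{B}\n\mc{B}^{\core}\n\mc{A}^{\core}\in\mc{C}\{3,6\}$ and then verify the remaining equation $(7)$ using the commuting-projector identity $\mc{A}\n\mc{B}\n\mc{B}^{\core}\n\mc{A}^{\core}=\mc{B}\n\mc{B}^{\core}\n\mc{A}\n\mc{A}^{\core}=\mc{A}\n\mc{A}^{\core}\n\mc{B}\n\mc{B}^{\core}$ from Theorem \ref{thm3.6}, which is exactly the paper's computation, merely rephrased through $\mc{P}=\mc{A}\n\mc{A}^{\core}$ and $\mc{Q}=\mc{B}\n\mc{B}^{\core}$. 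Your explicit justification of the second equality $(\mc{B}\n\mc{B}^{\core})^{\core}\n\mc{A}^{\core}=\mc{B}\n\mc{B}^{\core}\n\mc{A}^{\core}$ via $(\mc{B}\n\mc{B}^{\core})^{\core}=\mc{B}\n\mc{B}^{\core}$ is a small completeness improvement over the paper, which leaves that step implicit.
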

\begin{proof}
It is sufficient to show only $\mc{B}\n\mc{B}^{\core}\n\mc{A}^{\core}\in\mc{C}\{7\}$ where $\mc{C}=\mc{A}\n\mc{B}\n\mc{B}^{\core}$. That is $\mc{C}\n(\mc{B}\n\mc{B}^{\core}\n\mc{A}^{\core})^2=\mc{B}\n\mc{B}^{\core}\n\mc{A}^{\core}$. Applying $\mc{A}\n\mc{B}\n\mc{B}^{\core}\n\mc{A}^{\core}=\mc{B}\n\mc{B}^{\core}\n\mc{A}\n\mc{A}^{\core}=\mc{A}\n\mc{A}^{\core}\n\mc{B}\n\mc{B}^{\core}$ of Theorem \ref{thm3.6}, we have 
\begin{eqnarray*}
\mc{C}\n(\mc{B}\n\mc{B}^{\core}\n\mc{A}^{\core})^2&=&\mc{A}\n\mc{B}\n\mc{B}^{\core}\n(\mc{B}\n\mc{B}^{\core}\n\mc{A}^{\core})^2=\mc{A}\n\mc{B}\n\mc{B}^{\core}\n\mc{A}^{\core}\n\mc{B}\n\mc{B}^{\core}\n\mc{A}^{\core}\\
&=&\mc{A}\n\mc{A}^{\core}\n\mc{B}\n\mc{B}^{\core}\n\mc{B}\n\mc{B}^{\core}\n\mc{A}^{\core}=\mc{A}\n\mc{A}^{\core}\n\mc{B}\n\mc{B}^{\core}\n\mc{A}^{\core}\\
&=&\mc{B}\n\mc{B}^{\core}\n\mc{A}\n\mc{A}^{\core}\n\mc{A}^{\core}=\mc{B}\n\mc{B}^{\core}\n\mc{A}^{\core}.
\end{eqnarray*}
\end{proof}
\begin{theorem}\label{thm3.11}
Let $\mc{A},~\mc{B}\in\C^{\textbf{N}(n)\times \textbf{N}(n)}$ be core tensors. Assume that $\rg(\mc{A})\subseteq\rg(\mc{A}\n\mc{B})$ and $\mc{A}$ is EP.  If $(\mc{A}\n\mc{B})^{\core} = \mc{B}^{\core}\n\mc{A}^{\core}$ then $((\mc{A}^{\core})^*\n\mc{B})^{\core}=\mc{B}^{\core}\n\mc{A}^*$.
\end{theorem}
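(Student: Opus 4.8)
The plan is to introduce the shorthand $\mc{C}=(\mc{A}^{\core})^*\n\mc{B}$ and $\mc{X}=\mc{B}^{\core}\n\mc{A}^*$, and to prove that $\mc{X}$ is the core inverse of $\mc{C}$ by verifying the three conditions of Lemma \ref{lm2.11}$(a)$, namely $\mc{C}\n\mc{X}\n\mc{C}=\mc{C}$, $(\mc{C}\n\mc{X})^*=\mc{C}\n\mc{X}$, and $\mc{C}\n\mc{X}^2=\mc{X}$, after first checking that $\mc{C}$ is a core tensor. First I would record the consequences of the hypotheses. Since $\mc{A}$ is EP, Lemma \ref{lm2.6} gives $\mc{A}^{\core}=\mc{A}^{\dagger}=\mc{A}^{\#}$ and $\mc{A}\n\mc{A}^{\core}=\mc{A}^{\core}\n\mc{A}=:\mc{P}$, a Hermitian idempotent with $\rg(\mc{A})=\rg(\mc{A}^*)$; routine Moore--Penrose manipulations then give $\mc{P}\n(\mc{A}^{\core})^*=(\mc{A}^{\core})^*$ and $\mc{P}\n\mc{A}^*=\mc{A}^*$. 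Moreover $\rg(\mc{A}\n\mc{B})\subseteq\rg(\mc{A})$ always holds, so the hypothesis $\rg(\mc{A})\subseteq\rg(\mc{A}\n\mc{B})$ upgrades to $\rg(\mc{A})=\rg(\mc{A}\n\mc{B})$, whence by Lemma \ref{lm2.7} there is $\mc{W}$ with $\mc{A}=\mc{A}\n\mc{B}\n\mc{W}$.

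The key step I would isolate is that the two Hermitian idempotents $\mc{P}=\mc{A}\n\mc{A}^{\core}$ and $\mc{Q}:=(\mc{A}\n\mc{B})\n(\mc{A}\n\mc{B})^{\core}=\mc{A}\n\mc{B}\n\mc{B}^{\core}\n\mc{A}^{\core}$ coincide, where the second equality is the reverse-order law assumption. Using $\mc{A}=\mc{A}\n\mc{B}\n\mc{W}$ and that $(\mc{A}\n\mc{B})^{\core}$ is a $\{1\}$-inverse of $\mc{A}\n\mc{B}$, one gets $\mc{Q}\n\mc{A}=\mc{A}$, hence $\mc{Q}\n\mc{P}=\mc{A}\n\mc{A}^{\core}=\mc{P}$; taking conjugate transposes (both factors are Hermitian) gives $\mc{P}\n\mc{Q}=\mc{P}$. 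On the other hand $\mc{P}\n(\mc{A}\n\mc{B})=\mc{A}\n\mc{A}^{\core}\n\mc{A}\n\mc{B}=\mc{A}\n\mc{B}$ forces $\mc{P}\n\mc{Q}=\mc{Q}$. Comparing the two yields $\mc{P}=\mc{Q}$.

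With this the three conditions follow. For self-adjointness, note $(\mc{C}\n\mc{X})^*=\mc{A}\n\mc{B}\n\mc{B}^{\core}\n\mc{A}^{\core}=\mc{Q}$ (using that $\mc{B}\n\mc{B}^{\core}$ is Hermitian), so $\mc{C}\n\mc{X}=\mc{Q}^*=\mc{Q}=\mc{P}$ is Hermitian. For the inner-inverse identity, $\mc{C}\n\mc{X}\n\mc{C}=\mc{P}\n(\mc{A}^{\core})^*\n\mc{B}=(\mc{A}^{\core})^*\n\mc{B}=\mc{C}$, using $\mc{P}\n(\mc{A}^{\core})^*=(\mc{A}^{\core})^*$. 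For the last identity I would establish $\rg(\mc{X})\subseteq\rg(\mc{A})$, equivalently $\mc{P}\n\mc{X}=\mc{X}$: from $\mc{A}^*=\mc{A}^{\core}\n\mc{A}\n\mc{A}^*$ one writes $\mc{X}=\mc{B}^{\core}\n\mc{A}^{\core}\n\mc{A}\n\mc{A}^*=(\mc{A}\n\mc{B})^{\core}\n\mc{A}\n\mc{A}^*$ by the reverse-order law, and since $\rg((\mc{A}\n\mc{B})^{\core})=\rg(\mc{A}\n\mc{B})=\rg(\mc{A})$ Lemma \ref{lm2.7} gives $(\mc{A}\n\mc{B})^{\core}=\mc{A}\n\mc{S}$, so $\mc{X}=\mc{A}\n(\mc{S}\n\mc{A}\n\mc{A}^*)$ lies in $\rg(\mc{A})$; then $\mc{C}\n\mc{X}^2=(\mc{C}\n\mc{X})\n\mc{X}=\mc{P}\n\mc{X}=\mc{X}$. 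Finally $\mc{C}=\mc{C}\n\mc{X}\n\mc{C}=\mc{C}^2\n\mc{X}^2\n\mc{C}$ shows $\rg(\mc{C})\subseteq\rg(\mc{C}^2)$, so $ind(\mc{C})=1$ and $\mc{C}$ is a core tensor; Lemma \ref{lm2.11}$(a)$ then yields $\mc{X}=\mc{C}^{\core}$, that is, $((\mc{A}^{\core})^*\n\mc{B})^{\core}=\mc{B}^{\core}\n\mc{A}^*$.

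I expect the main obstacle to be the identity $\mc{P}=\mc{Q}$ together with the range inclusion $\rg(\mc{X})\subseteq\rg(\mc{A})$: these are precisely where the hypothesis $\rg(\mc{A})\subseteq\rg(\mc{A}\n\mc{B})$ and the EP property (through $\mc{A}^{\core}=\mc{A}^{\dagger}$ and $\rg(\mc{A})=\rg(\mc{A}^*)$) are consumed, and the delicate part is making the reverse-order law $\mc{B}^{\core}\n\mc{A}^{\core}=(\mc{A}\n\mc{B})^{\core}$ interact cleanly with the conjugate transposes appearing in $\mc{C}$ and $\mc{X}$.
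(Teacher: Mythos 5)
Your proof is correct, and it takes a genuinely different route from the paper's. The paper proves the theorem by directly verifying the three defining equations (3), (6), (7) of Definition~\ref{coredef} for the candidate $\mc{X}=\mc{B}^{\core}\n\mc{A}^*$ against $\mc{C}=(\mc{A}^{\core})^*\n\mc{B}$, in three separate computational chains: the EP identities $\mc{A}^*=\mc{A}^{\core}\n\mc{A}\n\mc{A}^*$ and $(\mc{A}^{\core}\n\mc{A})^*=\mc{A}^{\core}\n\mc{A}$ are fed in as needed, and the range hypothesis is consumed only inside the verification of (6), where $\mc{A}=\mc{A}\n\mc{B}\n\mc{U}$ is inserted midstream. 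You instead verify the conditions of Lemma~\ref{lm2.11}$(a)$ --- (1), (3), (7), plus the index-one check on $\mc{C}$ that this lemma requires and which the paper's direct appeal to the definition renders unnecessary --- and you organize everything around the projector identity $\mc{P}:=\mc{A}\n\mc{A}^{\core}=\mc{A}\n\mc{B}\n(\mc{A}\n\mc{B})^{\core}=:\mc{Q}$, which does not appear in the paper. That identity is exactly where your proof consumes the range hypothesis (via $\mc{Q}\n\mc{A}=\mc{A}$, hence $\mc{Q}\n\mc{P}=\mc{P}$, with Hermitian-ness of both projectors and $\mc{P}\n\mc{Q}=\mc{Q}$ forcing $\mc{P}=\mc{Q}$), after which each axiom collapses to a one-line projection computation: $\mc{C}\n\mc{X}=\mc{P}$, $\mc{P}\n\mc{C}=\mc{C}$, $\mc{P}\n\mc{X}=\mc{X}$. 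The auxiliary facts you invoke are all sound: the core inverse is a $\{1,2\}$-inverse, since conditions (6) and (7) give $\mc{M}\n\mc{X}\n\mc{M}=(\mc{M}\n\mc{X}^2)\n\mc{M}^2=\mc{X}\n\mc{M}^2=\mc{M}$ and $\mc{X}\n\mc{M}\n\mc{X}=(\mc{X}\n\mc{M}^2)\n\mc{X}^2=\mc{M}\n\mc{X}^2=\mc{X}$; the inclusion $\rg((\mc{A}\n\mc{B})^{\core})\subseteq\rg(\mc{A}\n\mc{B})$ follows from (7); and your observation $\mc{C}=\mc{C}^2\n\mc{X}^2\n\mc{C}$ does give $\rg(\mc{C})\subseteq\rg(\mc{C}^2)$, hence $ind(\mc{C})=1$. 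What your approach buys is transparency: the two hypotheses are localized (the range condition lives entirely in $\mc{P}=\mc{Q}$; EP lives in the absorption identities $\mc{P}\n(\mc{A}^{\core})^*=(\mc{A}^{\core})^*$ and $\mc{P}\n\mc{A}^*=\mc{A}^*$), whereas the paper interleaves them with longer products; the cost is the extra core-tensor verification. One cosmetic remark: for $\mc{P}\n\mc{X}=\mc{X}$ you only need the inclusion $\rg((\mc{A}\n\mc{B})^{\core})\subseteq\rg(\mc{A}\n\mc{B})\subseteq\rg(\mc{A})$, not the equality of ranges you assert (which is nonetheless true, since $\mc{A}\n\mc{B}=(\mc{A}\n\mc{B})^{\core}\n(\mc{A}\n\mc{B})^2$).
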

\begin{proof}
Let $(\mc{A}\n\mc{B})^{\core} = \mc{B}^{\core}\n\mc{A}^{\core}$. Since $\mc{A}$ is EP, by Lemma \ref{lm2.6}, $\mc{A}\n\mc{A}^{\core}=\mc{A}^{\core}\n\mc{A}$. This yields $\mc{A}^*=\mc{A}^{\core}\n\mc{A}\n\mc{A}^*$ and $(\mc{A}^{\core}\n\mc{A})^*=\mc{A}^{\core}\n\mc{A}$. Further from the range condition, we have $\mc{A}=\mc{A}\n\mc{B}\n\mc{U}$ for some $\mc{U}\in\C^{\textbf{N}(n)\times \textbf{N}(n)}$. Now
\begin{equation*}
((\mc{A}^{\core})^*\n\mc{B}\n\mc{B}^{\core}\n\mc{A}^*)^*= \mc{A}\n\mc{B}\n\mc{B}^{\core}\n\mc{A}^{\core}=(\mc{A}\n\mc{B}\n\mc{B}^{\core}\n\mc{A}^{\core})^*=(\mc{A}^{\core})^*\n\mc{B}\n\mc{B}^{\core}\n\mc{A}^*,
\end{equation*}
\begin{eqnarray*}
(\mc{A}^{\core})^*\n\mc{B}\n(\mc{B}^{\core}\n\mc{A}^*)^2&=&(\mc{A}^{\core})^*\n\mc{B}\n\mc{B}^{\core}\n\mc{A}^*\n\mc{B}^{\core}\n\mc{A}^*\\
&=&(\mc{A}\n\mc{B}\n\mc{B}^{\core}\n\mc{A}^{\core})^*\n\mc{B}^{\core}\n\mc{A}^*=\mc{A}\n\mc{B}\n\mc{B}^{\core}\n\mc{A}^{\core}\n\mc{B}^{\core}\n\mc{A}^*\\
&=&\mc{A}\n\mc{B}\n\mc{B}^{\core}\n\mc{A}^{\core}\n\mc{B}^{\core}\n\mc{A}^{\core}\n\mc{A}\n\mc{A}^*=\mc{B}^{\core}\n\mc{A}^{\core}\n\mc{A}\n\mc{A}^*\\
&=&\mc{B}^{\core}\n\mc{A}^*,\mbox{ and }
\end{eqnarray*}
\begin{eqnarray*}
\mc{B}^{\core}\n\mc{A}^*\n((\mc{A}^{\core})^*\n\mc{B})^2&=&\mc{B}^{\core}\n\mc{A}^{\core}\n\mc{A}\n\mc{B}\n(\mc{A}^{\core})^*\n\mc{B}\\
&=&\mc{B}^{\core}\n\mc{A}^{\core}\n\mc{A}\n\mc{B}\n(\mc{A}\n\mc{A}^{\core}\n\mc{A}^{\core})^*\n\mc{B}\\
&=& \mc{B}^{\core}\n\mc{A}^{\core}\n\mc{A}\n\mc{B}\n\mc{A}\n\mc{A}^{\core}\n(\mc{A}^{\core})^*\n\mc{B}\\
&=&\mc{B}^{\core}\n\mc{A}^{\core}\n\mc{A}\n\mc{B}\n\mc{A}\n\mc{B}\n\mc{U}\n\mc{A}^{\core}\n(\mc{A}^{\core})^*\n\mc{B}\\
&=&\mc{A}\n\mc{B}\n\mc{U}\n\mc{A}^{\core}\n(\mc{A}^{\core})^*\n\mc{B}=\mc{A}\n\mc{A}^{\core}\n(\mc{A}^{\core})^*\n\mc{B}\\
&=&(\mc{A}^{\core})^*\n\mc{B}.
\end{eqnarray*}
Therefore, $\mc{B}^{\core}\n\mc{A}^*$ is the core inverse of $(\mc{A}^{\core})^*\n\mc{B}$.
\end{proof}
\begin{remark}
The condition $\rg(\mc{A})\subseteq\rg(\mc{A}\n\mc{B})$ can be replaced by $\rg(\mc{B}^*)\subseteq\rg(\mc{B}^*\n\mc{A}^*)$.
\end{remark}
\begin{corollary}\label{cor3.12}
Let $\mc{A},~\mc{B}\in\C^{\textbf{N}(n)\times \textbf{N}(n)}$ be core tensors. Assume that $\rg(\mc{A})\subseteq\rg(\mc{A}\n\mc{B})$, $\rg(\mc{B}^*)\subseteq\rg(\mc{B}^*\n\mc{A}^*)$ and $\mc{A}$ is EP.  If $(\mc{A}\n\mc{B})^{\core} = \mc{B}^{\core}\n\mc{A}^{\core}$, then  
\begin{center}
    $(\mc{A}\n\mc{B}\n\mc{B}^{\core})^{\core}=\mc{B}\n\mc{B}^{\core}\n\mc{A}^{\core}$.
\end{center}

\end{corollary}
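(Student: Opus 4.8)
The plan is to recognize the candidate $\mc{X}=\mc{B}\n\mc{B}^{\core}\n\mc{A}^{\core}$ as the core inverse of $\mc{C}=\mc{A}\n\mc{B}\n\mc{B}^{\core}$ and to exploit a collapse that the present hypotheses force. Following the template of Corollary \ref{cor3.10}, Theorem \ref{thm3.4}$(b)$ already guarantees $\mc{X}\in\mc{C}\{3,6\}$, so in principle only condition $(7)$, namely $\mc{C}\n\mc{X}^{2}=\mc{X}$ together with $\mathrm{ind}(\mc{C})=1$, would remain; indeed $(3)$, $(6)$, $(7)$ give $\mc{C}\n\mc{X}\n\mc{C}=\mc{C}$, whence $\mc{C}=\mc{X}\n\mc{C}^{2}=\mc{C}^{2}\n(\mc{X}^{2}\n\mc{C})$ and Lemma \ref{lm2.8} returns the index-one property. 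Rather than grind out $(7)$ as in Corollary \ref{cor3.10}, I would prove the sharper identity $\mc{C}=\mc{A}$, after which the conclusion is immediate.

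The key step is a range computation. Since $(\mc{A}\n\mc{B})^{\core}=\mc{B}^{\core}\n\mc{A}^{\core}$, Eq.~\eqref{eqar} from the proof of Theorem \ref{thm3.4} yields $\mc{A}\n\mc{B}=\mc{B}\n\mc{B}^{\core}\n(\mc{A}\n\mc{B})$, so $\rg(\mc{A}\n\mc{B})\subseteq\rg(\mc{B})$. Combining this with the hypothesis $\rg(\mc{A})\subseteq\rg(\mc{A}\n\mc{B})$ and the trivial inclusion $\rg(\mc{A}\n\mc{B})\subseteq\rg(\mc{A})$ gives $\rg(\mc{A})=\rg(\mc{A}\n\mc{B})\subseteq\rg(\mc{B})$. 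The main obstacle is then to pass from a range condition on $\mc{A}$ to one on $\mc{A}^{*}$: because $\mc{A}$ is EP, Lemma \ref{lm2.6} gives $\mc{A}\n\mc{A}^{\core}=\mc{A}^{\core}\n\mc{A}$, and hence $\rg(\mc{A}^{*})=\rg(\mc{A})\subseteq\rg(\mc{B})$. Lemma \ref{lm2.7} then lets me write $\mc{A}^{*}=\mc{B}\n\mc{U}$, so $\mc{B}\n\mc{B}^{\core}\n\mc{A}^{*}=\mc{B}\n\mc{B}^{\core}\n\mc{B}\n\mc{U}=\mc{B}\n\mc{U}=\mc{A}^{*}$; taking conjugate transposes and using $(\mc{B}\n\mc{B}^{\core})^{*}=\mc{B}\n\mc{B}^{\core}$ yields $\mc{C}=\mc{A}\n\mc{B}\n\mc{B}^{\core}=\mc{A}$.

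Finally I would note that $\rg(\mc{A}^{\core})=\rg(\mc{A})\subseteq\rg(\mc{B})$, the inclusion $\rg(\mc{A}^{\core})\subseteq\rg(\mc{A})$ coming from $\mc{A}^{\core}=\mc{A}\n(\mc{A}^{\core})^{2}$ and its reverse from $\mc{A}=\mc{A}^{\core}\n\mc{A}^{2}$, so the same argument gives $\mc{B}\n\mc{B}^{\core}\n\mc{A}^{\core}=\mc{A}^{\core}$. Hence $(\mc{A}\n\mc{B}\n\mc{B}^{\core})^{\core}=\mc{C}^{\core}=\mc{A}^{\core}=\mc{B}\n\mc{B}^{\core}\n\mc{A}^{\core}$, as required. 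The delicate point is precisely the EP-driven conversion $\rg(\mc{A})=\rg(\mc{A}^{*})$ that turns the left-sided range bound into the right-sided annihilation $\mc{A}\n\mc{B}\n\mc{B}^{\core}=\mc{A}$; I expect the hypothesis $\rg(\mc{B}^{*})\subseteq\rg(\mc{B}^{*}\n\mc{A}^{*})$, inherited from Theorem \ref{thm3.11}, to be superfluous once $\mc{C}=\mc{A}$ is established, though an alternative route retaining it would instead mirror Corollary \ref{cor3.10} by verifying the projector commutation $\mc{A}\n\mc{A}^{\core}\n\mc{B}\n\mc{B}^{\core}=\mc{B}\n\mc{B}^{\core}\n\mc{A}\n\mc{A}^{\core}$.
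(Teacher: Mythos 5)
Your proposal is correct, but it takes a genuinely different route from the paper's. The paper proceeds exactly along the template you set aside: it invokes Theorem~\ref{thm3.4} to get $\mc{B}\n\mc{B}^{\core}\n\mc{A}^{\core}\in\mc{C}\{3,6\}$ for $\mc{C}=\mc{A}\n\mc{B}\n\mc{B}^{\core}$, and then verifies condition $(7)$, $\mc{C}\n(\mc{B}\n\mc{B}^{\core}\n\mc{A}^{\core})^2=\mc{B}\n\mc{B}^{\core}\n\mc{A}^{\core}$, by a direct multi-line computation that genuinely consumes \emph{both} range hypotheses through the factorizations $\mc{B}=\mc{U}\n\mc{A}\n\mc{B}$ (from $\rg(\mc{B}^*)\subseteq\rg(\mc{B}^*\n\mc{A}^*)$) and $\mc{A}=\mc{A}\n\mc{B}\n\mc{V}$ (from $\rg(\mc{A})\subseteq\rg(\mc{A}\n\mc{B})$), together with the EP commutation $\mc{A}\n\mc{A}^{\core}=\mc{A}^{\core}\n\mc{A}$. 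Your collapse argument is sharper and checks out: Eq.~\eqref{eqar} gives $\rg(\mc{A})\subseteq\rg(\mc{A}\n\mc{B})\subseteq\rg(\mc{B})$; the EP hypothesis converts this to $\rg(\mc{A}^*)\subseteq\rg(\mc{B})$ — the one step you only gesture at, which deserves the explicit line
\begin{equation*}
\mc{A}^*=\bigl(\mc{A}\n(\mc{A}^{\core}\n\mc{A})\bigr)^*=(\mc{A}^{\core}\n\mc{A})^*\n\mc{A}^*=(\mc{A}\n\mc{A}^{\core})^*\n\mc{A}^*=\mc{A}\n\mc{A}^{\core}\n\mc{A}^*,
\end{equation*}
using $\mc{A}\n\mc{A}^{\core}\n\mc{A}=\mc{A}$ and core condition $(3)$; then $\mc{B}\n\mc{B}^{\core}\n\mc{B}=\mc{B}$ forces $\mc{B}\n\mc{B}^{\core}\n\mc{A}^*=\mc{A}^*$, hence $\mc{A}\n\mc{B}\n\mc{B}^{\core}=\mc{A}$ after conjugation, and since $\rg(\mc{A}^{\core})=\rg(\mc{A})$ (from conditions $(6)$ and $(7)$) the same device gives $\mc{B}\n\mc{B}^{\core}\n\mc{A}^{\core}=\mc{A}^{\core}$, so both sides of the asserted identity reduce to $\mc{A}^{\core}$ and the conclusion is immediate. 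What your route buys: it is shorter, it shows the statement is degenerate under these hypotheses ($\mc{C}=\mc{A}$), and it exposes that $\rg(\mc{B}^*)\subseteq\rg(\mc{B}^*\n\mc{A}^*)$ is never used — a redundancy the paper's computation conceals. What the paper's route buys: it stays within the uniform $\{3,6,7\}$-verification scheme shared with Theorem~\ref{thm3.4} and Corollary~\ref{cor3.10}, and it avoids appealing to the EP range characterization $\rg(\mc{A}^*)\subseteq\rg(\mc{A})$, which is standard but nowhere stated in the paper, so if you keep your version you should include the one-line derivation above rather than cite Lemma~\ref{lm2.6} alone.
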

\begin{proof}
By Theorem \ref{thm3.4}, it is sufficient to show only $\mc{B}\n\mc{B}^{\core}\n\mc{A}^{\core}\in\mc{C}\{7\}$, where $\mc{C}=\mc{A}\n\mc{B}\n\mc{B}^{\core}$. From the range conditions, we have $\mc{B}=\mc{U}\n\mc{A}\n\mc{B}$ and $\mc{A}=\mc{A}\n\mc{B}\n\mc{V}$ for some $\mc{U}\in\C^{\textbf{N}(n)\times \textbf{N}(n)}$ and $\mc{V}\in\C^{\textbf{N}(n)\times \textbf{N}(n)}$. Applying these results along with Eq. \ref{eqar} and $\mc{A}\n\mc{A}^{\core}=\mc{A}^{\core}\n\mc{A}$, we obtain
\begin{eqnarray*}
\mc{C}\n(\mc{B}\n\mc{B}^{\core}\n\mc{A}^{\core})^2&=&\mc{A}\n\mc{B}\n\mc{B}^{\core}\n\mc{A}^{\core}\n\mc{B}\n\mc{B}^{\core}\n\mc{A}^{\core}\\
&=&\mc{B}\n\mc{B}^{\core}\n\mc{A}\n\mc{B}\n\mc{B}^{\core}\n\mc{A}^{\core}\n\mc{A}\n\mc{A}^{\core}\n\mc{B}\n\mc{B}^{\core}\n\mc{A}^{\core}\\
&=&\mc{B}\n\mc{B}^{\core}\n(\mc{A}\n\mc{B}\n\mc{B}^{\core}\n\mc{A}^{\core}\n\mc{A}\n\mc{B})\n\mc{V}\n\mc{A}^{\core}\n\mc{B}\n\mc{B}^{\core}\n\mc{A}^{\core}\\
&=&\mc{U}\n\mc{A}\n\mc{B}\n\mc{B}^{\core}\n(\mc{A}\n\mc{B}\n\mc{V})\n\mc{A}^{\core}\n\mc{B}\n\mc{B}^{\core}\n\mc{A}^{\core}\\
&=&\mc{U}\n\mc{A}\n\mc{B}\n\mc{B}^{\core}\n\mc{A}\n\mc{A}^{\core}\n\mc{B}\n\mc{B}^{\core}\n\mc{A}^{\core}\\
&=&\mc{U}\n\mc{A}\n\mc{B}\n\mc{B}^{\core}\mc{A}^{\core}\n\mc{A}\n\mc{B})\n\mc{B}^{\core}\n\mc{A}^{\core}\\
&=&\mc{U}\n\mc{A}\n\mc{B}\n\mc{B}^{\core}\n\mc{A}^{\core}=\mc{B}\n\mc{B}^{\core}\n\mc{A}^{\core}.
\end{eqnarray*}
Hence $\mc{B}\n\mc{B}^{\core}\n\mc{A}^{\core}\in\mc{C}\{7\}$.
\end{proof}

\section{Mixed-type of reverse-order law}
In this section, we present a few results related to various equivalents of the mixed-type reverse-order law for the core inverse of tensors. 
\begin{theorem}\label{thm3.14}
Let $\mc{A},~\mc{B}\in\C^{\textbf{N}(n)\times \textbf{N}(n)}$ and $ind(\mc{B})=ind(\mc{A}\n\mc{B})=ind(\mc{A}\n\mc{B}\n\mc{B}^{\core})=1$. Then the following statements are equivalent:
\begin{enumerate}
    \item[(a)] $(\mc{A}\n\mc{B})^{\core} =(\mc{A}\n\mc{B})^{\#}= \mc{B}^{\core}\n(\mc{A}\n\mc{B}\n\mc{B}^{\core})^{\core}$;
    \item[(b)] $\mc{B}^{\core}\n(\mc{A}\n\mc{B}\n\mc{B}^{\core})^{\core}\in(\mc{A}\n\mc{B})\{5\}$;
    \item[(c)]  $\mc{B}^{\core}\n\mc{B}\n\mc{A}\n\mc{B}=\mc{A}\n\mc{B}=\mc{B}^{\core}\n\mc{B}\n\mc{A}\n\mc{B}$ and $\mc{B}\n\mc{A}\n(\mc{A}\n\mc{B}\n\mc{B}^{\core})^{\core}=(\mc{A}\n\mc{B}\n\mc{B}^{\core})^{\core}\n\mc{A}\n\mc{B}$
    \item[(d)] $(\mc{A}\n\mc{B}\n\mc{B}^{\core})^{\core}=\mc{B}\n(\mc{A}\n\mc{B})^{\#}$
\end{enumerate}
\end{theorem}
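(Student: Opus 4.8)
The plan is to abbreviate $\mc{C}=\mc{A}\n\mc{B}$, $\mc{D}=\mc{A}\n\mc{B}\n\mc{B}^{\core}$, and to write $\mc{X}=\mc{B}^{\core}\n\mc{D}^{\core}$ for the tensor occurring in (a) and (b). Before addressing the four statements I would record a handful of unconditional identities that serve as bricks for everything that follows. From axioms (6) and (7) of the core inverse one gets $\mc{B}\n\mc{B}^{\core}\n\mc{B}=\mc{B}$, hence $\mc{D}\n\mc{B}=\mc{C}$ and $\mc{D}=\mc{C}\n\mc{B}^{\core}$, which together give $\rg(\mc{C})=\rg(\mc{D})$. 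Next, $\mc{C}\n\mc{X}=\mc{A}\n\mc{B}\n\mc{B}^{\core}\n\mc{D}^{\core}=\mc{D}\n\mc{D}^{\core}$, which is Hermitian by axiom (3) of $\mc{D}^{\core}$; and in the special case $\mc{D}^{\core}=\mc{B}\n\mc{C}^{\#}$ one has $\mc{D}\n\mc{D}^{\core}=\mc{D}\n\mc{B}\n\mc{C}^{\#}=\mc{C}\n\mc{C}^{\#}$. Finally, axiom (6) for $\mc{B}^{\core}$ gives $\mc{B}^{\core}\n\mc{B}\n\mc{B}=\mc{B}$, so $\mc{B}^{\core}\n\mc{B}$ fixes $\rg(\mc{B})$. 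All three index-one hypotheses guarantee that $\mc{B},\mc{C},\mc{D}$ are core tensors, so $\mc{C}^{\#}$ and $\mc{D}^{\core}$ exist.

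I would establish the equivalence as the cycle $(a)\Rightarrow(b)\Rightarrow(c)\Rightarrow(d)\Rightarrow(a)$. The step $(a)\Rightarrow(b)$ is immediate: if $\mc{X}=(\mc{A}\n\mc{B})^{\#}$, then axiom (5) of the group inverse is precisely $\mc{X}\n\mc{C}=\mc{C}\n\mc{X}$, i.e. $\mc{X}\in\mc{C}\{5\}$. The substantive step I expect to be cleanest is $(d)\Rightarrow(a)$. Assuming $\mc{D}^{\core}=\mc{B}\n\mc{C}^{\#}$, the brick above makes $\mc{D}\n\mc{D}^{\core}=\mc{C}\n\mc{C}^{\#}$ Hermitian, so $\mc{C}^{\#}$ is a $\{1,3\}$-inverse of $\mc{C}$; plugging this into Lemma \ref{lm2.5} with $\mc{C}^{(1,3)}=\mc{C}^{\#}$ yields $\mc{C}^{\core}=\mc{C}^{\#}\n\mc{C}\n\mc{C}^{\#}=\mc{C}^{\#}$, whence $\mc{C}$ is EP by Lemma \ref{lm2.6}. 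Moreover $\rg(\mc{C})=\rg(\mc{D})=\rg(\mc{D}^{\core})\subseteq\rg(\mc{B})$, and since $\rg(\mc{C}^{\#})=\rg(\mc{C})$ the projector $\mc{B}^{\core}\n\mc{B}$ fixes $\mc{C}^{\#}$; therefore $\mc{X}=\mc{B}^{\core}\n\mc{D}^{\core}=\mc{B}^{\core}\n\mc{B}\n\mc{C}^{\#}=\mc{C}^{\#}=\mc{C}^{\core}$, which is exactly (a).

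The two middle implications carry the computational weight. For $(b)\Rightarrow(c)$ I would start from $\mc{X}\n\mc{C}=\mc{C}\n\mc{X}=\mc{D}\n\mc{D}^{\core}$ and pre- and post-multiply by $\mc{B}$, $\mc{B}^{\core}$ and by powers of $\mc{D}$, using the core-inverse axioms of $\mc{B}$ and $\mc{D}$ together with $\mc{D}\n\mc{B}=\mc{C}$, to peel off the identities in the first line of (c) and to rearrange the commuting relation into $\mc{B}\n\mc{A}\n\mc{D}^{\core}=\mc{D}^{\core}\n\mc{A}\n\mc{B}$. For $(c)\Rightarrow(d)$ I would verify that $\mc{B}\n\mc{C}^{\#}$ satisfies the three defining relations of $\mc{D}^{\core}$ through Lemma \ref{lm2.11}(b): conditions (2) and (6) fall out by substituting $\mc{C}=\mc{D}\n\mc{B}$ and invoking the commutativity supplied by (c), while the Hermitian condition (3) reduces to showing that $\mc{D}\n\mc{B}\n\mc{C}^{\#}=\mc{C}\n\mc{C}^{\#}$ is self-adjoint.

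That last point is exactly where I expect the main obstacle. Self-adjointness of $\mc{C}\n\mc{C}^{\#}$ is equivalent to $\mc{C}=\mc{A}\n\mc{B}$ being EP, and this EP-ness is not visibly present in the raw hypotheses of (c); it has to be extracted from the range identities and the commutativity relation there — in effect the same mechanism that, run in reverse, manufactured EP-ness in $(d)\Rightarrow(a)$. Thus the crux of the theorem is the single lemma that the two conditions bundled in (c) force $\mc{C}\n\mc{C}^{\#}$ to be Hermitian; once that is secured, Lemma \ref{lm2.5} converts it into $\mc{C}^{\core}=\mc{C}^{\#}$ and the remaining verifications are routine bookkeeping with the core-inverse axioms. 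I would therefore isolate this Hermitian/EP step as the hard lemma and organize the rest of the proof around it.
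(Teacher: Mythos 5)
Your overall architecture is the same as the paper's --- the cycle $(a)\Rightarrow(b)\Rightarrow(c)\Rightarrow(d)\Rightarrow(a)$, with $(a)\Rightarrow(b)$ trivial --- and two of your steps are sound: the $(b)\Rightarrow(c)$ sketch matches the paper's computation in outline (pre/post-multiplication by $\mc{B}$, $\mc{B}^{\core}$ and use of $\rg(\mc{D}^{\core})\subseteq\rg(\mc{D})$ to write $\mc{D}^{\core}=\mc{D}\n\mc{U}$), and your $(d)\Rightarrow(a)$ is fully correct and in fact more structural than the paper's direct verification: routing through Lemma \ref{lm2.5} with $\mc{C}^{(1,3)}=\mc{C}^{\#}$, Lemma \ref{lm2.6}, and the range argument $\rg(\mc{C}^{\#})=\rg(\mc{C})=\rg(\mc{D}^{\core})\subseteq\rg(\mc{B})$ is a nice alternative. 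But the proposal has a genuine gap at $(c)\Rightarrow(d)$: you reduce it to the ``hard lemma'' that the hypotheses of $(c)$ force $\mc{C}\n\mc{C}^{\#}$ to be Hermitian, declare this the crux of the theorem, and never prove it. As written, the equivalence is therefore not established.

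The gap is avoidable, and the paper's proof shows how: do not try to certify $\mc{B}\n\mc{C}^{\#}$ as a core inverse of $\mc{D}$ via Lemma \ref{lm2.11}(b), since that is what forces you to verify the Hermitian axiom from scratch. Because $ind(\mc{A}\n\mc{B}\n\mc{B}^{\core})=1$ is a standing hypothesis, $\mc{D}^{\core}$ already exists; so instead verify that $\mc{X}=\mc{B}^{\core}\n\mc{D}^{\core}$ satisfies the three \emph{group}-inverse equations for $\mc{C}$, where no Hermitian condition appears. Equations (1) and (2) are routine ($\mc{C}\n\mc{X}\n\mc{C}=\mc{D}\n\mc{D}^{\core}\n\mc{D}\n\mc{B}=\mc{D}\n\mc{B}=\mc{C}$, and similarly for (2)), while equation (5) uses the commutation relation of $(c)$ together with the absorption identity $\mc{B}\n\mc{B}^{\core}\n\mc{D}^{\core}=\mc{D}^{\core}$, which follows from $\rg(\mc{D}^{\core})\subseteq\rg(\mc{D})$ and the first identity in $(c)$ (note that as printed, $(c)$ repeats $\mc{B}^{\core}\n\mc{B}\n\mc{A}\n\mc{B}$ on both sides of $\mc{A}\n\mc{B}$; one instance is a typo for $\mc{B}\n\mc{B}^{\core}\n\mc{A}\n\mc{B}$, and the argument needs both versions). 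Uniqueness of the group inverse then gives $\mc{C}^{\#}=\mc{B}^{\core}\n\mc{D}^{\core}$, and multiplying by $\mc{B}$ and absorbing yields $(d)$ directly: $\mc{B}\n\mc{C}^{\#}=\mc{B}\n\mc{B}^{\core}\n\mc{D}^{\core}=\mc{D}^{\core}$. With this ordering your hard lemma evaporates: $\mc{C}\n\mc{C}^{\#}=\mc{D}\n\mc{D}^{\core}$ is Hermitian by axiom (3) of the \emph{already existing} $\mc{D}^{\core}$ --- a fact you recorded among your opening bricks but never connected to $(c)\Rightarrow(d)$ --- and the EP-ness of $\mc{C}$ emerges a posteriori through $(d)\Rightarrow(a)$ rather than serving as a prerequisite.
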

\begin{proof}
$(a)\Rightarrow (b)$ Since $\mc{B}^{\core}\n(\mc{A}\n\mc{B}\n\mc{B}^{\core})^{\core}=(\mc{A}\n\mc{B})^{\#}$, so  $(b)$ is trivial.\\
$(b)\Rightarrow (c)$ Let $\mc{B}^{\core}\n(\mc{A}\n\mc{B}\n\mc{B}^{\core})^{\core}\in(\mc{A}\n\mc{B})\{5\}$. Which implies $\mc{B}^{\core}\n(\mc{A}\n\mc{B}\n\mc{B}^{\core})^{\core}$ and $\mc{A}\n\mc{B}$ commutes each other. Now
\begin{eqnarray*}
\mc{A}\n\mc{B}&=&\mc{A}\n\mc{B}\n\mc{B}^{\core}\n\mc{B}=(\mc{A}\n\mc{B}\n\mc{B}^{\core}\n(\mc{A}\n\mc{B}\n\mc{B}^{\core})^{\core}\n\mc{A}\n\mc{B}\n\mc{B}^{\core})\n\mc{B}\\
&=&\mc{A}\n\mc{B}\n\mc{B}^{\core}\n(\mc{A}\n\mc{B}\n\mc{B}^{\core})^{\core}\n\mc{A}\n\mc{B}=\mc{B}^{\core}\n(\mc{A}\n\mc{B}\n\mc{B}^{\core})^{\core}\n(\mc{A}\n\mc{B})^2.
\end{eqnarray*}
Using the above expression of $\mc{A}\n\mc{B}$ repetitively, we obtain 
\begin{equation*}
 \mc{A}\n\mc{B}=\mc{B}^{\core}\n\mc{B}\n\mc{B}^{\core}\n(\mc{A}\n\mc{B}\n\mc{B}^{\core})^{\core}\n(\mc{A}\n\mc{B})^2 =\mc{B}^{\core}\n\mc{B}\n\mc{A}\n\mc{B}, \mbox{ and}
\end{equation*}
\begin{equation}\label{eq15}
 \mc{A}\n\mc{B}=\mc{B}\n\mc{B}^{\core}\n\mc{B}^{\core}\n(\mc{A}\n\mc{B}\n\mc{B}^{\core})^{\core}\n(\mc{A}\n\mc{B})^2 =\mc{B}\n\mc{B}^{\core}\n\mc{A}\n\mc{B}.
\end{equation}
From the definition of core inverse, we have $\rg((\mc{A}\n\mc{B}\n\mc{B}^{\core})^{\core})\subseteq \rg(\mc{A}\n\mc{B}\n\mc{B}^{\core})$. This yields $(\mc{A}\n\mc{B}\n\mc{B}^{\core})^{\core}=\mc{A}\n\mc{B}\n\mc{B}^{\core}\n\mc{U}$ for some $\mc{U}\in\C^{\textbf{N}(n)\times \textbf{N}(n)}$. Also from Eq. \eqref{eq15}, we have $\mc{A}\n\mc{B}\n\mc{B}^{\core}=\mc{B}\n\mc{B}^{\core}\n\mc{A}\n\mc{B}\n\mc{B}^{\core}$. Applying these results, we obtain
\begin{eqnarray*}
\mc{B}\n\mc{A}\n(\mc{A}\n\mc{B}\n\mc{B}^{\core})^{\core}&=&\mc{B}\n\mc{A}\n\mc{A}\n\mc{B}\n\mc{B}^{\core}\n\mc{U}=\mc{B}\n\mc{A}\n\mc{B}\n\mc{B}^{\core}\n\mc{A}\n\mc{B}\n\mc{B}^{\core}\n\mc{U}\\
&=&\mc{B}\n\mc{A}\n\mc{B}\n\mc{B}^{\core}\n(\mc{A}\n\mc{B}\n\mc{B}^{\core})^{\core}\\
&=&\mc{B}\n\mc{B}^{\core}\n(\mc{A}\n\mc{B}\n\mc{B}^{\core})^{\core}\n\mc{A}\n\mc{B}\\
&=&\mc{B}\n\mc{B}^{\core}\n(\mc{A}\n\mc{B}\n\mc{B}^{\core}\n\mc{U})\n\mc{A}\n\mc{B}\\
&=& (\mc{A}\n\mc{B}\n\mc{B}^{\core}\n\mc{U})\n\mc{A}\n\mc{B}=(\mc{A}\n\mc{B}\n\mc{B}^{\core})^{\core}\n\mc{A}\n\mc{B}.
\end{eqnarray*}
$(c)\Rightarrow (d)$ Let $\mc{A}\n\mc{B}=\mc{B}\n\mc{B}^{\core}\n\mc{A}\n\mc{B}$. From the range condition $\rg((\mc{A}\n\mc{B}\n\mc{B}^{\core})^{\core})\subseteq \rg(\mc{A}\n\mc{B}\n\mc{B}^{\core})$, we have $(\mc{A}\n\mc{B}\n\mc{B}^{\core})^{\core}=\mc{B}\n\mc{B}^{\core}\n(\mc{A}\n\mc{B}\n\mc{B}^{\core})^{\core}$. So, it is enough to show $\mc{B}^{\core}\n(\mc{A}\n\mc{B}\n\mc{B}^{\core})^{\core}$ is the group inverse of $\mc{A}\n\mc{B}$. Since
\begin{equation*}
 \mc{B}^{\core}\n(\mc{A}\n\mc{B}\n\mc{B}^{\core})^{\core}\n(\mc{A}\n\mc{B})\n\mc{B}^{\core}\n(\mc{A}\n\mc{B}\n\mc{B}^{\core})^{\core}=\mc{B}^{\core}\n(\mc{A}\n\mc{B}\n\mc{B}^{\core})^{\core},  
\end{equation*}
\begin{equation*}
    \mc{A}\n\mc{B}\n(\mc{B}^{\core}\n(\mc{A}\n\mc{B}\n\mc{B}^{\core})^{\core})\n\mc{A}\n\mc{B}=\mc{A}\n\mc{B}\n\mc{B}^{\core}\n\mc{B}=\mc{A}\n\mc{B},\mbox{ and }
\end{equation*}
\begin{eqnarray*}
(\mc{A}\n\mc{B})\n(\mc{B}^{\core}\n(\mc{A}\n\mc{B}\n\mc{B}^{\core})^{\core})&=& \mc{B}^{\core}\n\mc{B}\n\mc{A}\n\mc{B}\n(\mc{B}^{\core}\n(\mc{A}\n\mc{B}\n\mc{B}^{\core})^{\core})\\
&=&\mc{B}^{\core}\n\mc{B}\n\mc{A}\n(\mc{A}\n\mc{B}\n\mc{B}^{\core})^{\core}\\
&=&\mc{B}^{\core}\n(\mc{A}\n\mc{B}\n\mc{B}^{\core})^{\core}\n\mc{A}\n\mc{B}.
\end{eqnarray*}
Therefore, $\mc{B}\n(\mc{A}\n\mc{B})^{\#}=\mc{B}\n\mc{B}^{\core}\n(\mc{A}\n\mc{B}\n\mc{B}^{\core})^{\core}=(\mc{A}\n\mc{B}\n\mc{B}^{\core})^{\core}$.\\
$(d)\Rightarrow (a)$ Let $(\mc{A}\n\mc{B}\n\mc{B}^{\core})^{\core}=\mc{B}\n(\mc{A}\n\mc{B})^{\#}$. Using this, we obtain 
\begin{eqnarray*}
\mc{A}\n\mc{B}&=&\mc{A}\n\mc{B}\n\mc{B}^{\core}\n\mc{B}=(\mc{A}\n\mc{B}\n\mc{B}^{\core})^{\core}\n(\mc{A}\n\mc{B}\n\mc{B}^{\core})^2\n\mc{B}\\
&=&\mc{B}\n(\mc{A}\n\mc{B})^{\#}\n(\mc{A}\n\mc{B}\n\mc{B}^{\core})^2\n\mc{B}=\mc{B}^{\core}\n\mc{B}^2\n(\mc{A}\n\mc{B})^{\#}\n(\mc{A}\n\mc{B}\n\mc{B}^{\core})^2\n\mc{B}\\
&=&\mc{B}^{\core}\n\mc{B}\n((\mc{A}\n\mc{B}\n\mc{B}^{\core})^{\core})\n(\mc{A}\n\mc{B}\n\mc{B}^{\core})^2\n\mc{B}\\
&=&\mc{B}^{\core}\n\mc{B}\n\mc{A}\n\mc{B}\n\mc{B}^{\core}\n\mc{B}=\mc{B}^{\core}\n\mc{B}\n\mc{A}\n\mc{B}.
\end{eqnarray*}
Thus $(\mc{A}\n\mc{B})^{\#}=\mc{A}\n\mc{B}\n((\mc{A}\n\mc{B})^{\#})^2=\mc{B}^{\core}\n\mc{B}\n\mc{A}\n\mc{B}\n((\mc{A}\n\mc{B})^{\#})^2=\mc{B}^{\core}\n\mc{B}\n(\mc{A}\n\mc{B})^{\#}=\mc{B}^{\core}\n(\mc{A}\n\mc{B}\n\mc{B}^{\core})^{\core}$. Next we will claim that $\mc{B}^{\core}\n(\mc{A}\n\mc{B}\n\mc{B}^{\core})^{\core}$ is the core inverse of $\mc{A}\n\mc{B}$.  Since $(\mc{A}\n\mc{B}\n\mc{B}^{\core}\n(\mc{A}\n\mc{B}\n\mc{B}^{\core})^{\core})^*=\mc{A}\n\mc{B}\n\mc{B}^{\core}\n(\mc{A}\n\mc{B}\n\mc{B}^{\core})^{\core}$, $\mc{A}\n\mc{B}=(\mc{A}\n\mc{B})^{\#}\n(\mc{A}\n\mc{B})^2=\mc{B}^{\core}\n(\mc{A}\n\mc{B}\n\mc{B}^{\core})^{\core}\n(\mc{A}\n\mc{B})^2$, and $\mc{A}\n\mc{B}\n(\mc{B}^{\core}\n(\mc{A}\n\mc{B}\n\mc{B}^{\core})^{\core})^2=\mc{A}\n\mc{B}\n((\mc{A}\n\mc{B})^{\#})^2=(\mc{A}\n\mc{B})^{\#}=\mc{B}^{\core}\n(\mc{A}\n\mc{B}\n\mc{B}^{\core})^{\core}$. Hence completes the proof.
\end{proof}

\begin{theorem}\label{thm3.15}
Let $\mc{A},~\mc{B}\in\C^{\textbf{N}(n)\times \textbf{N}(n)}$ and $ind(\mc{A})=ind(\mc{B})$. Then the following statements are equivalent:
\begin{enumerate}
    \item[(a)] $\mc{A}^{\core}=\mc{B}\n(\mc{A}\n\mc{B})^{\#}$;
    \item[(b)] $\mc{A}^{\core}\n\mc{A}\n\mc{B}=\mc{B}\n\mc{A}\n\mc{A}^{\core}$ and $\rg(\mc{A})\subseteq\rg(\mc{A}\n\mc{B})$;
    \item[(c)] $(\mc{A}\n\mc{B})^{\#}=(\mc{A}\n\mc{B})^{\core}=\mc{B}^{\core}\n\mc{A}^{\core}$ and $\rg(\mc{A})\subseteq\rg(\mc{B})$.
   \end{enumerate}
\end{theorem}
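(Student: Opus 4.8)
The plan is to establish the equivalence through the cyclic implications $(a)\Rightarrow(b)\Rightarrow(c)\Rightarrow(a)$. Throughout I abbreviate $\mc{C}=\mc{A}\n\mc{B}$ and use the elementary facts, all immediate from Definition \ref{coredef}, that the core inverse is a $\{1,2,3\}$-inverse, i.e. $\mc{A}\n\mc{A}^{\core}\n\mc{A}=\mc{A}$, $\mc{A}^{\core}\n\mc{A}\n\mc{A}^{\core}=\mc{A}^{\core}$ and $(\mc{A}\n\mc{A}^{\core})^{*}=\mc{A}\n\mc{A}^{\core}$, together with the range identities $\rg(\mc{A})=\rg(\mc{A}^{\core})=\rg(\mc{A}^{\core}\n\mc{A})$ and the analogous statements for $\mc{B}$. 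I shall also use that $\mc{A}\n\mc{A}^{\core}$ is the orthogonal projector onto $\rg(\mc{A})$ and that $\mc{B}\n\mc{B}^{\core}\n\mc{B}=\mc{B}$.

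For $(a)\Rightarrow(b)$ I assume $\mc{A}^{\core}=\mc{B}\n\mc{C}^{\#}$. Substituting into $\mc{A}=\mc{A}\n\mc{A}^{\core}\n\mc{A}$ gives $\mc{A}=\mc{C}\n\mc{C}^{\#}\n\mc{A}$, so $\rg(\mc{A})\subseteq\rg(\mc{C})$; and commuting $\mc{C}^{\#}$ with $\mc{C}$ yields $\mc{A}^{\core}\n\mc{A}\n\mc{B}=\mc{B}\n\mc{C}^{\#}\n\mc{C}=\mc{B}\n\mc{C}\n\mc{C}^{\#}=\mc{B}\n\mc{A}\n\mc{A}^{\core}$, which is the commutation relation of $(b)$. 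For the return leg $(c)\Rightarrow(a)$ I assume $(\mc{A}\n\mc{B})^{\#}=\mc{B}^{\core}\n\mc{A}^{\core}$ and $\rg(\mc{A})\subseteq\rg(\mc{B})$; since $\rg(\mc{A}^{\core})=\rg(\mc{A})\subseteq\rg(\mc{B})$, Lemma \ref{lm2.7} gives $\mc{A}^{\core}=\mc{B}\n\mc{V}$, whence $\mc{B}\n(\mc{A}\n\mc{B})^{\#}=\mc{B}\n\mc{B}^{\core}\n\mc{A}^{\core}=\mc{B}\n\mc{B}^{\core}\n\mc{B}\n\mc{V}=\mc{A}^{\core}$, giving $(a)$.

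The substance is in $(b)\Rightarrow(c)$. Because $\rg(\mc{C})\subseteq\rg(\mc{A})$ always, the inclusion assumed in $(b)$ upgrades to $\rg(\mc{A})=\rg(\mc{C})$, so by Lemma \ref{lm2.7} there is $\mc{U}$ with $\mc{A}=\mc{C}\n\mc{U}$. Premultiplying by $\mc{A}^{\core}$ and then applying the commutation relation $\mc{A}^{\core}\n\mc{A}\n\mc{B}=\mc{B}\n\mc{A}\n\mc{A}^{\core}$ produces $\mc{A}^{\core}\n\mc{A}=\mc{B}\n(\mc{A}\n\mc{A}^{\core}\n\mc{U})$; since $\rg(\mc{A}^{\core}\n\mc{A})=\rg(\mc{A})$, this is exactly the range inclusion $\rg(\mc{A})\subseteq\rg(\mc{B})$ demanded by $(c)$, and it in turn forces $\mc{B}^{\core}\n\mc{B}\n\mc{A}=\mc{A}$ and $\mc{B}\n\mc{B}^{\core}\n\mc{A}^{\core}=\mc{A}^{\core}$.

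It remains to prove the reverse-order law and, with it, that $\mathrm{ind}(\mc{C})=1$. This is where I expect the real difficulty: index-one of $\mc{C}$ cannot be reached by pure rewriting, since every attempt to exhibit $\mc{C}\in\rg(\mc{C}^{2})$ merely circles back to $\mc{C}^{2}=\mc{C}^{2}$. My plan is instead to produce the inverse explicitly and let its existence certify the index. Setting $\mc{Z}=\mc{B}^{\core}\n\mc{A}^{\core}$, I would use the commutation relation together with $\mc{B}^{\core}\n\mc{B}\n\mc{A}=\mc{A}$ and $\mc{B}\n\mc{B}^{\core}\n\mc{A}^{\core}=\mc{A}^{\core}$ to establish the two identities $\mc{Z}\n\mc{C}=\mc{A}\n\mc{A}^{\core}=\mc{C}\n\mc{Z}$. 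These give at once $\mc{C}\n\mc{Z}=\mc{Z}\n\mc{C}$, $\mc{C}\n\mc{Z}\n\mc{C}=\mc{A}\n\mc{A}^{\core}\n\mc{A}\n\mc{B}=\mc{C}$ and $\mc{Z}\n\mc{C}\n\mc{Z}=\mc{B}^{\core}\n\mc{A}^{\core}\n\mc{A}\n\mc{A}^{\core}=\mc{Z}$, hence $\mc{C}=\mc{C}^{2}\n\mc{Z}=\mc{Z}\n\mc{C}^{2}$; by Lemma \ref{lm2.8} the group inverse of $\mc{C}$ exists, so $\mathrm{ind}(\mc{C})=1$ and $(\mc{A}\n\mc{B})^{\#}=\mc{Z}=\mc{B}^{\core}\n\mc{A}^{\core}$. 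Finally, $\mc{C}\n\mc{Z}=\mc{A}\n\mc{A}^{\core}$ is Hermitian and $\mc{Z}\n\mc{C}^{2}=\mc{C}$, so with $\mc{Z}\n\mc{C}\n\mc{Z}=\mc{Z}$ already in hand, Lemma \ref{lm2.11}$(b)$ identifies $\mc{Z}$ as the core inverse of $\mc{C}$, giving $(\mc{A}\n\mc{B})^{\core}=(\mc{A}\n\mc{B})^{\#}=\mc{B}^{\core}\n\mc{A}^{\core}$ and closing the cycle.
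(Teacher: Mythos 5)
Your proposal is correct and follows essentially the same route as the paper: the cyclic chain $(a)\Rightarrow(b)\Rightarrow(c)\Rightarrow(a)$, with the heart of $(b)\Rightarrow(c)$ being the same two identities $\mc{B}^{\core}\n\mc{A}^{\core}\n\mc{A}\n\mc{B}=\mc{A}\n\mc{A}^{\core}=\mc{A}\n\mc{B}\n\mc{B}^{\core}\n\mc{A}^{\core}$, from which the group-inverse equations and then the core conditions (via Lemma \ref{lm2.11}) follow exactly as in the paper. Your only departures are cosmetic streamlinings --- routing $\rg(\mc{A})\subseteq\rg(\mc{B})$ through $\rg(\mc{A}^{\core}\n\mc{A})=\rg(\mc{A})$, and proving $(c)\Rightarrow(a)$ by the direct identification $\mc{B}\n\mc{B}^{\core}\n\mc{A}^{\core}=\mc{A}^{\core}$ rather than re-verifying Lemma \ref{lm2.11}$(b)$ --- both of which are sound.
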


\begin{proof}
$(a)\Rightarrow (b)$ Let $\mc{A}^{\core}=\mc{B}\n(\mc{A}\n\mc{B})^{\#}$. Since $\mc{A}=\mc{A}\n\mc{A}^{\core}\n\mc{A}=\mc{A}\n\mc{B}\n(\mc{A}\n\mc{B})^{\#}\n\mc{A}$, so it implies $\rg(\mc{A})\subseteq\rg(\mc{A}\n\mc{B})$. Further, we get the second part as follows,
\begin{center}
    $\mc{A}^{\core}\n\mc{A}\n\mc{B}=\mc{B}\n(\mc{A}\n\mc{B})^{\#}\n\mc{A}\n\mc{B}=\mc{B}\n\n\mc{A}\n\mc{B}\n(\mc{A}\n\mc{B})^{\#}=\mc{B}\n\n\mc{A}\n\mc{A}^{\core}$.
\end{center}
$(b)\Rightarrow (c)$ Let $\rg(\mc{A})\subseteq\rg(\mc{A}\n\mc{B})$.  This yields $\mc{A}=\mc{A}\n\mc{B}\n\mc{U}$ for some $\mc{U}\in\C^{\textbf{N}(n)\times \textbf{N}(n)}$. Further, $\mc{A}=\mc{A}^{\core}\n\mc{A}^2=\mc{A}^{\core}\n\mc{A}\n\mc{B}\n\mc{U}\n\mc{A}=\mc{B}\n\mc{A}\n\mc{A}^{\core}\n\mc{U}\n\mc{A}$. Thus $\rg(\mc{A})\subseteq\rg(\mc{B})$ and hence $\mc{A}=\mc{B}\n\mc{V}$ for some $\mc{V}\in\C^{\textbf{N}(n)\times \textbf{N}(n)}$. By using $\mc{A}=\mc{B}\n\mc{V}$, we obtain 
\begin{equation*}
\mc{B}^{\core}\n\mc{A}^{\core}\n\mc{A}\n\mc{B}=\mc{B}^{\core}\n\mc{B}\n\mc{A}\n\mc{A}^{\core}=\mc{B}^{\core}\n\mc{B}^2\n\mc{V}\n\mc{A}^{\core}=\mc{B}\n\mc{V}\n\mc{A}^{\core}=\mc{A}\n\mc{A}^{\core},
\end{equation*}
and 
\begin{equation*}
\mc{A}\n\mc{B}\n\mc{B}^{\core}\n\mc{A}^{\core}=\mc{A}\n\mc{B}\n\mc{B}^{\core}\n\mc{A}\n(\mc{A}^{\core})^2=\mc{A}\n\mc{B}\n\mc{V}\n(\mc{A}^{\core})^2=\mc{A}^2\n(\mc{A}^{\core})^2=\mc{A}\n\mc{A}^{\core}.
\end{equation*}
Therefore, $\mc{B}^{\core}\n\mc{A}^{\core}\in(\mc{A}\n\mc{B})\{3,5\}$. Further $\mc{A}\n\mc{B}\n\mc{B}^{\core}\n\mc{A}^{\core}\n \mc{A}\n\mc{B}=\mc{A}\n\mc{A}^{\core}\n \mc{A}\n\mc{B}=\mc{A}\n\mc{B}$ and $\mc{B}^{\core}\n\mc{A}^{\core}\mc{A}\n\mc{B}\n\mc{B}^{\core}\n\mc{A}^{\core}=\mc{B}^{\core}\n\mc{A}^{\core}\n\mc{A}\n\mc{A}^{\core}=\mc{B}^{\core}\n\mc{A}^{\core}$. Hence $\mc{B}^{\core}\n\mc{A}^{\core}=(\mc{A}\n\mc{B})^{\#}$. Since  $\mc{B}^{\core}\n\mc{A}^{\core}\n(\mc{A}\n\mc{B})^2=(\mc{A}\n\mc{B})^{\#}\n(\mc{A}\n\mc{B})^2=\mc{A}\n\mc{B}$ and 
\begin{equation*}
   \mc{A}\n\mc{B}\n(\mc{B}^{\core}\n\mc{A}^{\core})^2=\mc{A}\n\mc{B}\n((\mc{A}\n\mc{B})^{\#})^2=(\mc{A}\n\mc{B})^{\#}=\mc{B}^{\core}\n\mc{A}^{\core}, 
\end{equation*}
we have $(\mc{A}\n\mc{B})^{\core}=\mc{B}^{\core}\n\mc{A}^{\core}$.\\
$(c)\Rightarrow (a)$  Let $\mc{X}=\mc{B}\n(\mc{A}\n\mc{B})^{\#}$ and $(\mc{A}\n\mc{B})^{\core}=(\mc{A}\n\mc{B})^{\#}$. Then 
\begin{center}
  $\mc{X}\n\mc{A}\mc{X}=\mc{B}\n(\mc{A}\n\mc{B})^{\#}\n\mc{A}\n\mc{B}\n(\mc{A}\n\mc{B})^{\#}=\mc{B}\n(\mc{A}\n\mc{B})^{\#}=\mc{X}$,   and
\end{center}
 $(\mc{A}\n\mc{X})^*=(\mc{A}\n\mc{B}\n(\mc{A}\n\mc{B})^{\#})^*=(\mc{A}\n\mc{B}\n(\mc{A}\n\mc{B})^{\core})^*=\mc{A}\n\mc{B}\n(\mc{A}\n\mc{B})^{\core}=\mc{A}\n\mc{X}$.  From the  condition $\rg(\mc{A})\subseteq\rg(\mc{B})$, we have $\mc{X}\n\mc{A}^2=\mc{B}\n(\mc{A}\n\mc{B})^{\#}\n\mc{A}^2=\mc{B}\n\mc{B}^{\core}\n\mc{A}^{\core}\n\mc{A}^2=\mc{B}\n\mc{B}^{\core}\n\mc{A}=\mc{B}\n\mc{B}^{\core}\n\mc{B}\n\mc{V}=\mc{B}\n\mc{V}=\mc{A}$. By Lemma \ref{lm2.11}, $\mc{X}$ is the core inverse of $\mc{A}$.
\end{proof}

\begin{theorem}\label{thm3.16}
Let $\mc{A},~\mc{B}\in\C^{\textbf{N}(n)\times \textbf{N}(n)}$ and $ind(\mc{A})=1$. Then $\mc{A}^{\core}=\mc{B}\n(\mc{A}\n\mc{B})^{\core}$ if and only if $\rg(\mc{A})\subseteq\rg(\mc{B}\n\mc{A}\n\mc{B})$.
\end{theorem}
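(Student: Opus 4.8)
The plan is to handle the two implications separately, writing $\mc{C}=\mc{A}\n\mc{B}$ throughout, and to lean on Lemma \ref{lm2.7} to turn range inclusions into factorizations and on Lemma \ref{lm2.11}(b) to recognize a core inverse. Before starting I would record two elementary consequences of Definition \ref{coredef} applied to $\mc{C}$, namely that the core inverse is a $\{1,2\}$-inverse: the $\{1\}$-property $\mc{C}\n\mc{C}^{\core}\n\mc{C}=\mc{C}$ follows by combining (6) and (7), and the $\{2\}$-property follows from the short chain $\mc{C}^{\core}\n\mc{C}\n\mc{C}^{\core}=\mc{C}^{\core}\n\mc{C}^2\n(\mc{C}^{\core})^2=\mc{C}\n(\mc{C}^{\core})^2=\mc{C}^{\core}$, using (7), then (6), then (7). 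These will be used silently below. Note that $ind(\mc{A})=1$ makes $\mc{A}$ a core tensor, so Lemma \ref{lm2.11} is available, and the appearance of $\mc{C}^{\core}$ presupposes $\mc{C}=\mc{A}\n\mc{B}$ is core invertible.

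For the necessity direction, I would assume $\mc{A}^{\core}=\mc{B}\n\mc{C}^{\core}$, begin from the defining identity $\mc{A}=\mc{A}^{\core}\n\mc{A}^2$ (condition (6) for $\mc{A}$), substitute the hypothesis, and then expand $\mc{C}^{\core}=\mc{C}\n(\mc{C}^{\core})^2$ (condition (7) for $\mc{C}$) to obtain
\[
\mc{A}=\mc{B}\n\mc{C}^{\core}\n\mc{A}^2=\mc{B}\n\mc{C}\n(\mc{C}^{\core})^2\n\mc{A}^2=(\mc{B}\n\mc{A}\n\mc{B})\n\big((\mc{C}^{\core})^2\n\mc{A}^2\big).
\]
Since $\mc{A}$ now factors through $\mc{B}\n\mc{A}\n\mc{B}$ on the left, Lemma \ref{lm2.7} immediately yields $\rg(\mc{A})\subseteq\rg(\mc{B}\n\mc{A}\n\mc{B})$.

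For sufficiency, I would assume $\rg(\mc{A})\subseteq\rg(\mc{B}\n\mc{A}\n\mc{B})$, so that Lemma \ref{lm2.7} gives $\mc{A}=\mc{B}\n\mc{A}\n\mc{B}\n\mc{U}=\mc{B}\n\mc{C}\n\mc{U}$ for some $\mc{U}$, and then set $\mc{X}=\mc{B}\n\mc{C}^{\core}$ and verify the three conditions of Lemma \ref{lm2.11}(b). The Hermitian condition is immediate, since $\mc{A}\n\mc{X}=\mc{C}\n\mc{C}^{\core}$ is Hermitian by (3) for $\mc{C}$. The $\{2\}$-condition $\mc{X}\n\mc{A}\n\mc{X}=\mc{X}$ reduces to $\mc{B}\n\mc{C}^{\core}\n\mc{C}\n\mc{C}^{\core}=\mc{B}\n\mc{C}^{\core}$, which is exactly the $\{2\}$-property recorded above. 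The crucial step is condition (6): here I would first observe $\mc{A}^2=\mc{A}\n\mc{B}\n\mc{C}\n\mc{U}=\mc{C}^2\n\mc{U}$ using $\mc{A}\n\mc{B}=\mc{C}$, whence
\[
\mc{X}\n\mc{A}^2=\mc{B}\n\mc{C}^{\core}\n\mc{C}^2\n\mc{U}=\mc{B}\n\mc{C}\n\mc{U}=\mc{A},
\]
using (6) for $\mc{C}$. Lemma \ref{lm2.11}(b) then identifies $\mc{X}=\mc{B}\n(\mc{A}\n\mc{B})^{\core}$ as $\mc{A}^{\core}$.

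The only genuinely delicate point I anticipate is the bookkeeping of which factorization feeds which step: the whole sufficiency argument hinges on the observation $\mc{A}^2=\mc{C}^2\n\mc{U}$, which collapses the awkward product $\mc{C}^{\core}\n\mc{A}^2$ into $\mc{C}^{\core}\n\mc{C}^2=\mc{C}$; without converting the range inclusion into a \emph{two-sided} factorization through $\mc{C}$ this simplification fails. Everything else is a routine application of the two characterization lemmas together with the defining identities (3), (6), (7) for the core inverse.
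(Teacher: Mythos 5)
Your proof follows essentially the same route as the paper's: the necessity direction is the paper's computation (start from $\mc{A}=\mc{A}^{\core}\n\mc{A}^2$, substitute the hypothesis, expand $(\mc{A}\n\mc{B})^{\core}=\mc{A}\n\mc{B}\n((\mc{A}\n\mc{B})^{\core})^2$, and invoke Lemma \ref{lm2.7}), and your sufficiency direction verifies exactly the three conditions of Lemma \ref{lm2.11}(b) for $\mc{X}=\mc{B}\n(\mc{A}\n\mc{B})^{\core}$, including the same key collapse $\mc{X}\n\mc{A}^2=\mc{B}\n(\mc{A}\n\mc{B})^{\core}\n(\mc{A}\n\mc{B})^2\n\mc{U}=\mc{B}\n\mc{A}\n\mc{B}\n\mc{U}=\mc{A}$ that the paper uses. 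Your side derivations (the $\{1,2\}$-properties of the core inverse from conditions (6) and (7), and $\mc{A}^2=\mc{C}^2\n\mc{U}$) are all correct.

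There is, however, one genuine omission in your sufficiency direction. You write that ``the appearance of $\mc{C}^{\core}$ presupposes $\mc{C}=\mc{A}\n\mc{B}$ is core invertible,'' but the theorem's hypotheses assert only $ind(\mc{A})=1$; for the ``if'' implication to produce the identity $\mc{A}^{\core}=\mc{B}\n(\mc{A}\n\mc{B})^{\core}$, the existence of $(\mc{A}\n\mc{B})^{\core}$ must be \emph{deduced} from the range inclusion, not presupposed. The paper does exactly this before verifying Lemma \ref{lm2.11}(b): from $\mc{A}=\mc{B}\n\mc{A}\n\mc{B}\n\mc{U}$ and $\rg(\mc{A})=\rg(\mc{A}^2)$ (which holds because $ind(\mc{A})=1$, giving $\mc{A}=\mc{A}^2\n\mc{V}$ by Lemma \ref{lm2.7}), one obtains
\[
\mc{A}=\mc{A}\n\mc{A}\n\mc{V}=\mc{A}\n\mc{B}\n\mc{A}\n\mc{B}\n\mc{U}\n\mc{V}=(\mc{A}\n\mc{B})^2\n\mc{U}\n\mc{V},
\]
hence $\mc{A}\n\mc{B}=(\mc{A}\n\mc{B})^2\n\mc{U}\n\mc{V}\n\mc{B}$, so $\rg(\mc{A}\n\mc{B})\subseteq\rg((\mc{A}\n\mc{B})^2)$, i.e.\ $ind(\mc{A}\n\mc{B})=1$ and $(\mc{A}\n\mc{B})^{\core}$ exists. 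With this short argument inserted at the start of your sufficiency step, your proof is complete and coincides with the paper's.
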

\begin{proof}
Let $\mc{A}^{\core}=\mc{B}\n(\mc{A}\n\mc{B})^{\core}$. Since $\mc{A}=\mc{A}^{\core}\n\mc{A}^{\core}\n\mc{A}=\mc{B}\n(\mc{A}\n\mc{B})^{\core}\n\mc{A}^{\core}\n\mc{A}=\mc{B}\n\mc{A}\n\mc{B}\n((\mc{A}\n\mc{B})^{\core})^2\n\mc{A}^{\core}\n\mc{A}$, we have  $\rg(\mc{A})\subseteq\rg(\mc{B}\n\mc{A}\n\mc{B})$.

Conversely let $\rg(\mc{A})\subseteq\rg(\mc{B}\n\mc{A}\n\mc{B})$. This implies $\mc{A}=\mc{B}\n\mc{A}\n\mc{B}\n\mc{U}$ for some $\mc{U}\in\C^{\textbf{N}(n)\times \textbf{N}(n)}$. Also From the range conditions  $\rg(\mc{A})\subseteq\rg(\mc{B}\n\mc{A}\n\mc{B})$ and  $\rg(\mc{A})=\rg(\mc{A}^2)$, it follows that $\rg(\mc{A}\n\mc{B})\subseteq\rg((\mc{A}\n\mc{B})^2)$. Hence $ind(\mc{A}\n\mc{B})=1$. Let $\mc{X}=\mc{B}\n(\mc{A}\n\mc{B})^{\core}$. Now 
\begin{center}
    $\mc{X}\n\mc{A}\n\mc{X}=\mc{B}\n(\mc{A}\n\mc{B})^{\core}\n\mc{A}\n\mc{B}\n(\mc{A}\n\mc{B})^{\core}=\mc{B}\n(\mc{A}\n\mc{B})^{\core}=\mc{X}$, 
\end{center}
\begin{center}
  $(\mc{A}\n\mc{X})^*=(\mc{A}\n\mc{B}\n(\mc{A}\n\mc{B})^{\core})^*=\mc{A}\n\mc{B}\n(\mc{A}\n\mc{B})^{\core}=\mc{A}\n\mc{X}$, and   
\end{center}
\begin{center}
    $\mc{X}\n\mc{A}^2=\mc{B}\n(\mc{A}\n\mc{B})^{\core}\n\mc{A}\n\mc{B}\n\mc{A}\n\mc{B}\n\mc{U}=n\mc{B}\n\mc{A}\n\mc{B}\n\mc{U}=\mc{A}$.
\end{center}
Thus by Lemma \ref{lm2.11}, $\mc{A}^{\core}=\mc{X}=\mc{B}\n(\mc{A}\n\mc{B})^{\core}$.
\end{proof}

\begin{theorem}\label{thm3.17}
Let $\mc{A},~\mc{B}\in\C^{\textbf{N}(n)\times \textbf{N}(n)}$ be core tensors. Then the following are equivalent:
\begin{enumerate}
    \item[(a)] $(\mc{A}\n\mc{B})^{\#}=\mc{B}^{\core}\n\mc{A}^{\core}$ and $(\mc{B}\n\mc{A})^{\#}=\mc{A}^{\core}\n\mc{B}^{\core}$;
    \item[(b)] $\mc{A}^{\core}\n\mc{A}\n\mc{B}=\mc{B}\n\mc{A}\n\mc{A}^{\core}$, $\mc{B}^{\core}\n\mc{B}\n\mc{A}=\mc{A}\n\mc{B}\n\mc{B}^{\core}$, and $\mc{A}\n\mc{B}^{\core}\n\mc{A}^{\core}=\mc{B}^{\core}\n\mc{A}^{\core}\n\mc{A}$.
   \end{enumerate}
\end{theorem}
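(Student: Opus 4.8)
The plan is to prove the two implications $(a)\Rightarrow(b)$ and $(b)\Rightarrow(a)$, abbreviating $\mc{P}=\mc{A}\n\mc{B}$, $\mc{Q}=\mc{B}\n\mc{A}$, $\mc{X}=\mc{B}^{\core}\n\mc{A}^{\core}$ and $\mc{Y}=\mc{A}^{\core}\n\mc{B}^{\core}$. The calculations will rest on the core-tensor absorption identities $\mc{A}\n\mc{A}^{\core}\n\mc{A}=\mc{A}$, $\mc{A}^{\core}\n\mc{A}\n\mc{A}^{\core}=\mc{A}^{\core}$, $\mc{A}^{\core}\n\mc{A}^2=\mc{A}$, $\mc{A}\n(\mc{A}^{\core})^2=\mc{A}^{\core}$, on the Hermitian projection property $(\mc{A}\n\mc{A}^{\core})^*=\mc{A}\n\mc{A}^{\core}$, and on the identity $\mc{A}^{\core}\n\mc{A}=\mc{A}^{\#}\n\mc{A}$ read off from Lemma~\ref{lm2.5} (using $\mc{A}\n\mc{A}^{(1,3)}\n\mc{A}=\mc{A}$); the same facts for $\mc{B}$ are used throughout. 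The observation that organizes everything is that $(b)(3)$ is precisely the statement that $\mc{A}$ commutes with $\mc{X}$, that is $\mc{A}\n\mc{X}=\mc{X}\n\mc{A}$.

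For $(b)\Rightarrow(a)$ I would first show that $\mc{X}$ is the group inverse of $\mc{P}$ by checking $\mc{P}\n\mc{X}=\mc{X}\n\mc{P}$, $\mc{P}\n\mc{X}\n\mc{P}=\mc{P}$ and $\mc{X}\n\mc{P}\n\mc{X}=\mc{X}$. For the commutation, $(b)(2)$ rewrites $\mc{P}\n\mc{X}=\mc{A}\n\mc{B}\n\mc{B}^{\core}\n\mc{A}^{\core}$ as $\mc{B}^{\core}\n\mc{B}\n\mc{A}\n\mc{A}^{\core}$, while $(b)(1)$ rewrites $\mc{X}\n\mc{P}=\mc{B}^{\core}\n\mc{A}^{\core}\n\mc{A}\n\mc{B}$ into the same tensor $\mc{B}^{\core}\n\mc{B}\n\mc{A}\n\mc{A}^{\core}$, so both products collapse to a common value. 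The equation $\mc{P}\n\mc{X}\n\mc{P}=\mc{P}$ then drops out of this common value using $\mc{A}\n\mc{A}^{\core}\n\mc{A}=\mc{A}$, $(b)(2)$ and $\mc{B}\n\mc{B}^{\core}\n\mc{B}=\mc{B}$, and $\mc{X}\n\mc{P}\n\mc{X}=\mc{X}$ follows once I record the auxiliary identity $\mc{A}\n\mc{A}^{\core}\n\mc{X}=\mc{X}$, which I obtain from the rewrite $\mc{X}=\mc{A}\n\mc{B}^{\core}\n(\mc{A}^{\core})^2$ (a consequence of right-absorption and $(b)(3)$) together with $\mc{A}\n\mc{A}^{\core}\n\mc{A}=\mc{A}$. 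Once $\mc{X}=\mc{P}^{\#}$ is in hand, the Cline-type formula of Lemma~\ref{lm2.4} gives $(\mc{B}\n\mc{A})^{\#}=\mc{B}\n(\mc{B}^{\core}\n\mc{A}^{\core})^2\n\mc{A}$, and I would simplify its right-hand side to $\mc{A}^{\core}\n\mc{B}^{\core}=\mc{Y}$ using $(b)(1)$--$(b)(3)$.

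For the converse $(a)\Rightarrow(b)$ I would start from the two group-inverse commutations $\mc{P}\n\mc{X}=\mc{X}\n\mc{P}$ and $\mc{Q}\n\mc{Y}=\mc{Y}\n\mc{Q}$ together with the reflexivity relations $\mc{P}\n\mc{X}\n\mc{P}=\mc{P}$ and $\mc{Q}\n\mc{Y}\n\mc{Q}=\mc{Q}$ forced by $(a)$. Feeding these into the absorption identities lets me peel off one outer factor of $\mc{A}$ or $\mc{B}$ at a time; $(b)(1)$ and $(b)(2)$ should emerge from the reflexivity relations after multiplying by the appropriate core inverse, the Hermitian projection property $(\mc{A}\n\mc{A}^{\core})^*=\mc{A}\n\mc{A}^{\core}$ is used to transport adjoints so that $(b)(3)$ can be isolated from the commutation of $\mc{P}$ with $\mc{X}$, and the Cline-type identity again ties the $\mc{P}$-side and $\mc{Q}$-side computations together.

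The hard part will be the cross relation $(b)(3)$, $\mc{A}\n\mc{B}^{\core}\n\mc{A}^{\core}=\mc{B}^{\core}\n\mc{A}^{\core}\n\mc{A}$: it is the only condition mixing $\mc{A}$, $\mc{A}^{\core}$ and $\mc{B}^{\core}$, and it is what forces both group-inverse commutations to hold simultaneously. Two features make it delicate. First, the system $(b)$ is \emph{not} symmetric under $\mc{A}\leftrightarrow\mc{B}$ — the swap interchanges $(b)(1)$ and $(b)(2)$ but turns $(b)(3)$ into a genuinely different identity — so the second reverse-order law $(\mc{B}\n\mc{A})^{\#}=\mc{A}^{\core}\n\mc{B}^{\core}$ cannot be read off by symmetry and really must be routed through Lemma~\ref{lm2.4}. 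Second, the substitutions must be sequenced so that a factor $\mc{A}\n\mc{A}^{\core}$ or $\mc{B}\n\mc{B}^{\core}$ is manufactured in exactly the position where an absorption identity can consume it, without creating stray factors that no hypothesis can remove; once that ordering is fixed, the remainder reduces to routine chaining of the absorption identities.
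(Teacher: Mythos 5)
Your proposal has a genuine gap, and it sits exactly where you predicted the difficulty would be: condition $(b)(3)$. You read the printed identity $\mc{A}\n\mc{B}^{\core}\n\mc{A}^{\core}=\mc{B}^{\core}\n\mc{A}^{\core}\n\mc{A}$ literally, as the commutation $\mc{A}\n\mc{X}=\mc{X}\n\mc{A}$ with $\mc{X}=\mc{B}^{\core}\n\mc{A}^{\core}$, and organized both implications around that reading. But that identity does not follow from $(a)$, so your plan for $(a)\Rightarrow(b)$ (``transport adjoints so that $(b)(3)$ can be isolated from the commutation of $\mc{P}$ with $\mc{X}$'') cannot be carried out. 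Concretely: take $\mc{A},\mc{B}$ invertible and non-commuting. They are core tensors with $\mc{A}^{\core}=\mc{A}^{-1}$ and $\mc{B}^{\core}=\mc{B}^{-1}$, and $(a)$ holds automatically, since $(\mc{A}\n\mc{B})^{\#}=(\mc{A}\n\mc{B})^{-1}=\mc{B}^{-1}\n\mc{A}^{-1}$ and symmetrically for $\mc{B}\n\mc{A}$; yet the literal $(b)(3)$ reduces to $\mc{A}\n\mc{B}^{-1}=\mc{B}^{-1}\n\mc{A}$, i.e.\ to $\mc{A}\n\mc{B}=\mc{B}\n\mc{A}$, which fails. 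The printed $(b)(3)$ is a typo. The identity that $(a)$ does force --- and the one the paper's proof actually derives and then uses in \emph{both} directions --- is the intertwining relation $\mc{A}\n\mc{A}^{\core}\n\mc{B}^{\core}=\mc{B}^{\core}\n\mc{A}^{\core}\n\mc{A}$, i.e.\ $\mc{A}\n(\mc{B}\n\mc{A})^{\#}=(\mc{A}\n\mc{B})^{\#}\n\mc{A}$, obtained in one line from the Cline-type Lemma \ref{lm2.4}: $\mc{A}\n\mc{A}^{\core}\n\mc{B}^{\core}=\mc{A}\n(\mc{B}\n\mc{A})^{\#}=\mc{A}\n\mc{B}\n\left((\mc{A}\n\mc{B})^{\#}\right)^2\n\mc{A}=(\mc{A}\n\mc{B})^{\#}\n\mc{A}=\mc{B}^{\core}\n\mc{A}^{\core}\n\mc{A}$. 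So the correct organizing observation is not that $\mc{A}$ commutes with $\mc{X}$, but that $\mc{A}$ intertwines $\mc{Y}=\mc{A}^{\core}\n\mc{B}^{\core}$ with $\mc{X}$.

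The misreading also undermines the one step of your $(b)\Rightarrow(a)$ that you left as a claim. Your verification that $\mc{X}$ is the group inverse of $\mc{P}=\mc{A}\n\mc{B}$ is correct --- $(b)(1)$ and $(b)(2)$ alone give the commutation and $\mc{P}\n\mc{X}\n\mc{P}=\mc{P}$, and your auxiliary rewrite $\mc{X}=\mc{A}\n\mc{B}^{\core}\n(\mc{A}^{\core})^2$ does yield $\mc{X}\n\mc{P}\n\mc{X}=\mc{X}$ --- and it parallels the paper's direct check of the three group-inverse equations. But the promised simplification of $\mc{B}\n\mc{X}^2\n\mc{A}$ to $\mc{A}^{\core}\n\mc{B}^{\core}$ ``using $(b)(1)$--$(b)(3)$'' stalls under the literal $(b)(3)$: after one application you face the factor $\mc{A}^{\core}\n\mc{A}\n\mc{B}^{\core}$, for which none of your three hypotheses supplies a rewrite. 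With the corrected $(b)(3)$ the $\mc{Q}$-side closes in two applications, essentially as the paper's ``similarly'' step does: $\mc{Y}\n\mc{Q}\n\mc{Y}=\mc{A}^{\core}\n\mc{B}^{\core}\n\mc{B}\n(\mc{A}\n\mc{A}^{\core}\n\mc{B}^{\core})=\mc{A}^{\core}\n\mc{B}^{\core}\n\mc{B}\n\mc{B}^{\core}\n\mc{A}^{\core}\n\mc{A}=\mc{A}^{\core}\n(\mc{B}^{\core}\n\mc{A}^{\core}\n\mc{A})=\mc{A}^{\core}\n\mc{A}\n\mc{A}^{\core}\n\mc{B}^{\core}=\mc{A}^{\core}\n\mc{B}^{\core}$. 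The repair, then, is to restate $(b)(3)$ as $\mc{A}\n\mc{A}^{\core}\n\mc{B}^{\core}=\mc{B}^{\core}\n\mc{A}^{\core}\n\mc{A}$, derive it in $(a)\Rightarrow(b)$ via Lemma \ref{lm2.4} exactly as you already planned for the second reverse-order law, and rerun your $(b)\Rightarrow(a)$ computations with this intertwining identity in place of the commutation.
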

\begin{proof}
$(a)\Rightarrow (b)$
From $(\mc{A}\n\mc{B})^{\#}=\mc{B}^{\core}\n\mc{A}^{\core}$, we obtain 
\begin{eqnarray*}
    \mc{A}\n\mc{B}&=&(\mc{A}\n\mc{B})^{\#}\n(\mc{A}\n\mc{B})^{2}=\mc{B}^{\core}\n\mc{A}^{\core}\n(\mc{A}\n\mc{B})^{2}=\mc{B}^{\core}\n\mc{B}\n(\mc{A}\n\mc{B})^{\#}\n(\mc{A}\n\mc{B})^2\\
    &=&\mc{B}^{\core}\n\mc{B}\n\mc{A}\n\mc{B}, \mbox{  and}
\end{eqnarray*}
\begin{center}
 $  \mc{A}\n\mc{B}=(\mc{A}\n\mc{B})^{2}\n(\mc{A}\n\mc{B})^{\#}=(\mc{A}\n\mc{B})^{2}\n\mc{B}^{\core}\n\mc{A}^{\core}\mc{A}\n\mc{A}^{\core}=\mc{A}\n\mc{B}\n\mc{A}\n\mc{A}^{\core}$.
 \end{center} 
 Similarly, from $(\mc{B}\n\mc{A})^{\#}=\mc{A}^{\core}\n\mc{B}^{\core}$, we get $\mc{B}\n\mc{A}=\mc{A}^{\core}\n\mc{A}\n\mc{B}\n\mc{A}=\mc{B}\n\mc{A}\n\mc{B}\n\mc{B}^{\core}$. Using the above properties of $\mc{A}\n\mc{B}$ and $\mc{B}\n\mc{A}$, we have  $\mc{A}^{\core}\n\mc{A}\n\mc{B}=\mc{A}^{\core}\n\mc{A}\n\mc{B}\n\mc{A}\mc{A}^{\core}=\mc{B}\n\mc{A}\mc{A}^{\core}$ and $\mc{B}^{\core}\n\mc{B}\n\mc{A}=\mc{B}^{\core}\n\mc{B}\n\mc{A}\n\mc{B}\n\mc{B}^{\core}=\mc{A}\n\mc{B}\n\mc{B}^{\core}$. Further by Lemma \ref{lm2.4}, 
 \begin{center}
 $\mc{A}\n\mc{A}^{\core}\n\mc{B}^{\core}=\mc{A}\n(\mc{B}\n\mc{A})^{\#}=\mc{A}\n\mc{B}((\mc{A}\n\mc{B})^{\#})^2\n\mc{A}=(\mc{A}\n\mc{B})^{\#}\n\mc{A}=\mc{B}^{\core}\n\mc{A}^{\core}\n\mc{A}$.
\end{center}
 $(b)\Rightarrow (a)$ By using the assumption of $(b)$, we obtain 
 \begin{eqnarray*}
 \mc{A}\n\mc{B}\n\mc{B}^{\core}\n\mc{A}^{\core}\n\mc{A}\n\mc{B}&=&\mc{A}\n\mc{B}\n\mc{B}^{\core}\n\mc{B}\n\mc{A}\n\mc{A}^{\core}=\mc{A}\n\mc{B}\n\mc{A}\n\mc{A}^{\core}=\\
 &=&\mc{A}\n\mc{A}^{\core}\n\mc{A}\n\mc{B}=\mc{A}\n\mc{B},\\
 \mc{B}^{\core}\n\mc{A}^{\core}\n\mc{A}\n\mc{B}\n\mc{B}^{\core}\n\mc{A}^{\core}&=&\mc{A}\n\mc{A}^{\core}\n\mc{B}^{\core}\n\mc{B}\n\n\mc{B}^{\core}\n\mc{A}^{\core}=\mc{A}\n\mc{A}^{\core}\n\n\mc{B}^{\core}\n\mc{A}^{\core}\\
 &=&\mc{B}^{\core}\n\mc{A}^{\core}\n\mc{A}\n\mc{A}^{\core}=\mc{B}^{\core}\n\mc{A}^{\core}, \mbox{ and }
  \end{eqnarray*}
\begin{center}
    $\mc{B}^{\core}\n\mc{A}^{\core}\n\mc{A}\n\mc{B}=\mc{B}^{\core}\n\mc{B}\n\mc{A}\n\mc{A}^{\core}=\mc{A}\n\mc{B}\n\mc{B}^{\core}\n\mc{A}^{\core}$.
\end{center}
Thus $\mc{B}^{\core}\n\mc{A}^{\core}$ is the group inverse of $(\mc{A}\n\mc{B})$. Similarly, we can show $(\mc{B}\n\mc{A})^{\#}=\mc{A}^{\core}\n\mc{B}^{\core}$.
\end{proof}

The core inverse of the Kronecker product of two tensors can be computed using the next result.

\begin{theorem}\label{revkron}
$\mc{A},~\mc{B}\in\C^{\textbf{N}(n)\times \textbf{N}(n)}$ be core tensors. Then $(\mc{A}\otimes\mc{B})^{\core} =
\mc{A}^{\core}\otimes\mc{B}^{\core}$.
\end{theorem}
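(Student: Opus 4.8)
The plan is to confirm that the candidate tensor $\mc{X}:=\mc{A}^{\core}\otimes\mc{B}^{\core}$ satisfies the three defining equations of the core inverse from Definition \ref{coredef}, namely conditions (3), (6), and (7), with the base tensor taken to be $\mc{A}\otimes\mc{B}$. Since $\mc{A}$ and $\mc{B}$ are core tensors, $\mc{A}^{\core}$ and $\mc{B}^{\core}$ exist, so $\mc{X}$ is well defined; and since the core inverse is unique whenever it exists, verifying these three equations will simultaneously prove that $\mc{A}\otimes\mc{B}$ is core invertible and that its core inverse equals $\mc{X}$.

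The entire computation rests on Proposition \ref{Krop}. Part (c) converts an Einstein product of two Kronecker products into the Kronecker product of the corresponding Einstein factors, and part (a) distributes the conjugate transpose across the Kronecker product. First I would record the two identities $(\mc{A}\otimes\mc{B})\n\mc{X}=(\mc{A}\n\mc{A}^{\core})\otimes(\mc{B}\n\mc{B}^{\core})$ and $(\mc{A}\otimes\mc{B})^2=\mc{A}^2\otimes\mc{B}^2$, both immediate from part (c). For condition (3) I would apply part (a) to get $((\mc{A}\otimes\mc{B})\n\mc{X})^*=(\mc{A}\n\mc{A}^{\core})^*\otimes(\mc{B}\n\mc{B}^{\core})^*$ and then invoke condition (3) for $\mc{A}$ and for $\mc{B}$ individually to see that each factor is Hermitian, hence so is the Kronecker product. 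For condition (6) I would compute $\mc{X}\n(\mc{A}\otimes\mc{B})^2=(\mc{A}^{\core}\n\mc{A}^2)\otimes(\mc{B}^{\core}\n\mc{B}^2)=\mc{A}\otimes\mc{B}$, using condition (6) for each factor. For condition (7), I would first note $\mc{X}^2=(\mc{A}^{\core})^2\otimes(\mc{B}^{\core})^2$ and then $(\mc{A}\otimes\mc{B})\n\mc{X}^2=(\mc{A}\n(\mc{A}^{\core})^2)\otimes(\mc{B}\n(\mc{B}^{\core})^2)=\mc{A}^{\core}\otimes\mc{B}^{\core}=\mc{X}$, using condition (7) for each factor.

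There is no genuine obstacle here; the result is essentially a bookkeeping consequence of the multiplicativity of the Kronecker product under the Einstein product. The only points requiring mild care are ensuring that every Einstein product has matching orders so that Proposition \ref{Krop} applies verbatim (all tensors live in $\C^{\textbf{N}(n)\times\textbf{N}(n)}$, so this is automatic) and correctly pairing the factors when the mixing rule of part (c) is invoked. Once the three conditions are checked, the conclusion follows directly from the uniqueness clause of Definition \ref{coredef}, so no auxiliary characterization such as Lemma \ref{lm2.11} is needed.
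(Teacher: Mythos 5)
Your proposal is correct and follows essentially the same route as the paper: both verify conditions (3), (6), and (7) of Definition \ref{coredef} for the candidate $\mc{A}^{\core}\otimes\mc{B}^{\core}$ by repeated use of Proposition \ref{Krop} parts (a) and (c), then invoke uniqueness. Your computation of condition (7) is in fact written more cleanly than the paper's, which contains a typo ($\mc{A}\n(\mc{A}^{\core})^2\otimes\mc{A}\n(\mc{A}^{\core})^2$ where the second factor should read $\mc{B}\n(\mc{B}^{\core})^2$).
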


\begin{proof}
Let $\mc{C} = \mc{A}\otimes\mc{B}$ and $\mc{X} =
\mc{A}^{\core}\otimes\mc{B}^{\core}$. By applying Poeposition \ref{Krop}, we obtain 
\begin{equation*}
 (\mc{C}\n\mc{X})^*=(\mc{A}\n\mc{A}^{\core})^*{\otimes} (\mc{B}\n\mc{B}^{\core})^*= \mc{A}\n\mc{A}^{\core}\otimes \mc{B}\n\mc{B}^{\core}=\mc{C}\n\mc{X},
\end{equation*}
\begin{center}
   $ \mc{C}\n\mc{X}^2=(\mc{A}\n\mc{A}^{\core}\otimes \mc{B}\n\mc{B}^{\core})\n(\mc{A}^{\core}\otimes\mc{B}^{\core})=\mc{A}\n(\mc{A}^{\core})^2\otimes\mc{A}\n(\mc{A}^{\core})^2=\mc{A}^{\core}\otimes\mc{B}^{\core}=\mc{X}$, 
\end{center}
and
\begin{center}
    $ \mc{X}\n\mc{C}^2=(\mc{A}^{\core}\n\mc{A}\otimes \mc{B}^{\core}\n\mc{B})\n(\mc{A}\otimes\mc{B})=(\mc{A}^{\core}\n\mc{A}^2)\otimes(\mc{B}^{\core}\n\mc{B}^2)=\mc{A}\otimes\mc{B}=\mc{C}
$.
\end{center}
\end{proof}

\begin{theorem}\label{unit}
Let $\mc{A},~\mc{B}\in\C^{\textbf{N}(n)\times \textbf{N}(n)}$   be two core tensors and $ind(\mc{A}\n\mc{B})=1$.
\begin{enumerate}
    \item[(a)] If $\mc{B}$ is unitary and $\rg(\mc{B}^*\n\mc{A}^{\core})\subseteq\rg(\mc{A}^{\core})$, then $(\mc{A}\n\mc{B})^{\core}=\mc{B}^*\n\mc{A}^{\core}$.
    \item[(b)] If $\mc{A}$ is unitary and $\rg(\mc{A})\subseteq\rg(B)$, then $(\mc{A}\n\mc{B})^{\core}=\mc{B}^{\core}\n\mc{A}^{*}$.
\end{enumerate}
\begin{proof}
$(a)$ Let $\rg(\mc{B}^*\n\mc{A}^{\core})\subseteq\rg(\mc{A}^{\core})$. This implies $\mc{B}^*\n\mc{A}^{\core}=\mc{A}^{\core}\n\mc{U}$ for some $\mc{U}\in\C^{\textbf{N}(n)\times \textbf{N}(n)}$. Now
\begin{center}
   $\mc{A}\n\mc{B}\n\mc{B}^*\n\mc{A}^{\core}\n\mc{A}\n\mc{B}=\mc{A}\n\mc{A}^{\core}\n\mc{A}\n\mc{B}=\mc{A}\n\mc{B}$,  \\
   $(\mc{A}\n\mc{B}\n\mc{B}^*\n\mc{A}^{\core})^*=(\mc{A}\n\mc{A}^{\core})^*=\mc{A}\n\mc{A}^{\core}=\mc{A}\n\mc{B}\n\mc{B}^*\n\mc{A}$, and 
\end{center}
\begin{eqnarray*}
\mc{A}\n\mc{B}\n(\mc{B}^*\n\mc{A}^{\core})^2=\mc{A}\n\mc{A}^{\core}\n\mc{B}^*\n\mc{A}^{\core}=\mc{A}\n\mc{A}^{\core}\n\mc{A}^{\core}\n\mc{U}=\mc{A}^{\core}\n\mc{U}=\mc{B}^*\n\mc{A}^{\core}.
\end{eqnarray*}
Thus by Lemma \ref{lm2.11}, $(\mc{A}\n\mc{B})^{\core}=\mc{B}^*\n\mc{A}^{\core}$.\\
$(b)$ From the range condition $\rg(\mc{A})\subseteq\rg(\mc{B})$, we have $\mc{A}=\mc{B}\n\mc{U}$ for some $\mc{U}\in\C^{\textbf{N}(n)\times \textbf{N}(n)}$. Now $\mc{B}^{\core}\n\mc{A}^{*}\n\mc{A}\n\mc{B}\n\mc{B}^{\core}\n\mc{A}^{*}=\mc{B}^{\core}\n\n\mc{B}\n\mc{B}^{\core}\n\mc{A}^{*}=\mc{B}^{\core}\n\mc{A}^{*}$, $(\mc{A}\n\mc{B}\n\mc{B}^{\core}\n\mc{A}^{*})^*=\mc{A}\n\mc{B}\n\mc{B}^{\core}\n\mc{A}^{*}$, and 
$ \mc{B}^{\core}\n\mc{A}^{*}\n( \mc{A}\n\mc{B})^2=\mc{B}^{\core}\n\mc{B}\n\mc{A}\n\mc{B}=\mc{B}\n\mc{U}\n\mc{B}=\mc{A}\n\mc{B}
$. Again by Lemma \ref{lm2.11}, $(\mc{A}\n\mc{B})^{\core}=\mc{B}^{\core}\n\mc{A}^{*}$.
\end{proof}
\end{theorem}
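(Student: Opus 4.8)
The plan is to verify, in each part separately, that the proposed expression is the core inverse of $\mc{A}\n\mc{B}$ by checking the three defining conditions supplied by Lemma~\ref{lm2.11}. The hypothesis $ind(\mc{A}\n\mc{B})=1$ guarantees that $\mc{A}\n\mc{B}$ is a core tensor, so Lemma~\ref{lm2.11} is applicable. For part (a) I would set $\mc{X}=\mc{B}^*\n\mc{A}^{\core}$ and verify the triple of Lemma~\ref{lm2.11}(a) for $\mc{A}\n\mc{B}$, namely $(\mc{A}\n\mc{B})\n\mc{X}\n(\mc{A}\n\mc{B})=\mc{A}\n\mc{B}$, $((\mc{A}\n\mc{B})\n\mc{X})^*=(\mc{A}\n\mc{B})\n\mc{X}$, and $(\mc{A}\n\mc{B})\n\mc{X}^2=\mc{X}$. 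For part (b) I would set $\mc{X}=\mc{B}^{\core}\n\mc{A}^*$ and verify the dual triple of Lemma~\ref{lm2.11}(b), that is $\mc{X}\n(\mc{A}\n\mc{B})\n\mc{X}=\mc{X}$, $((\mc{A}\n\mc{B})\n\mc{X})^*=(\mc{A}\n\mc{B})\n\mc{X}$, and $\mc{X}\n(\mc{A}\n\mc{B})^2=\mc{A}\n\mc{B}$.

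The unitarity hypothesis is what makes the first two conditions immediate in each case. In part (a), $\mc{B}\n\mc{B}^*=\mc{I}$ collapses $\mc{A}\n\mc{B}\n\mc{B}^*\n\mc{A}^{\core}$ to $\mc{A}\n\mc{A}^{\core}$, after which the $\{1\}$-type condition reduces to the absorption identity $\mc{A}\n\mc{A}^{\core}\n\mc{A}=\mc{A}$ (itself a quick consequence of conditions (6) and (7) of the core definition) and the Hermitian condition reduces to condition (3), $(\mc{A}\n\mc{A}^{\core})^*=\mc{A}\n\mc{A}^{\core}$. In part (b), $\mc{A}^*\n\mc{A}=\mc{I}$ collapses $\mc{A}^*\n\mc{A}\n\mc{B}$ to $\mc{B}$, so the $\{2\}$-property $\mc{B}^{\core}\n\mc{B}\n\mc{B}^{\core}=\mc{B}^{\core}$ dispatches the first condition and the Hermitian property of $\mc{B}\n\mc{B}^{\core}$ handles the second.

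The step I expect to be the real obstacle is the remaining condition, where the range hypothesis must be invoked through Lemma~\ref{lm2.7}. In part (a), after reducing $(\mc{A}\n\mc{B})\n\mc{X}^2$ to $\mc{A}\n\mc{A}^{\core}\n\mc{B}^*\n\mc{A}^{\core}$ via $\mc{B}\n\mc{B}^*=\mc{I}$, I would use $\rg(\mc{B}^*\n\mc{A}^{\core})\subseteq\rg(\mc{A}^{\core})$ to write $\mc{B}^*\n\mc{A}^{\core}=\mc{A}^{\core}\n\mc{U}$, and then condition (7) in the form $\mc{A}\n\mc{A}^{\core}\n\mc{A}^{\core}=\mc{A}^{\core}$ returns exactly $\mc{B}^*\n\mc{A}^{\core}=\mc{X}$. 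In part (b), after reducing $\mc{X}\n(\mc{A}\n\mc{B})^2$ to $\mc{B}^{\core}\n\mc{B}\n\mc{A}\n\mc{B}$ via $\mc{A}^*\n\mc{A}=\mc{I}$, I would feed in $\rg(\mc{A})\subseteq\rg(\mc{B})$ as $\mc{A}=\mc{B}\n\mc{U}$ and use condition (6) in the form $\mc{B}^{\core}\n\mc{B}^2=\mc{B}$ to obtain $\mc{B}^{\core}\n\mc{B}\n\mc{A}=\mc{A}$, hence recover $\mc{A}\n\mc{B}$. The only genuine subtlety is recognizing that each range inclusion is precisely tuned so that exactly one core-inverse absorption identity closes the computation; once the correct substitution is spotted the remaining algebra is routine.
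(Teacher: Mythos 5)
Your proposal is correct and matches the paper's proof essentially step for step: both verify $\mc{B}^*\n\mc{A}^{\core}$ against the three conditions of Lemma~\ref{lm2.11}(a) in part (a) and $\mc{B}^{\core}\n\mc{A}^*$ against Lemma~\ref{lm2.11}(b) in part (b), using unitarity ($\mc{B}\n\mc{B}^*=\mc{I}$, resp.\ $\mc{A}^*\n\mc{A}=\mc{I}$) to collapse the first two conditions and the range hypothesis (via Lemma~\ref{lm2.7}, writing $\mc{B}^*\n\mc{A}^{\core}=\mc{A}^{\core}\n\mc{U}$, resp.\ $\mc{A}=\mc{B}\n\mc{U}$) to close the remaining one. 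Your added justifications of the absorption identities $\mc{A}\n\mc{A}^{\core}\n\mc{A}=\mc{A}$ and $\mc{B}^{\core}\n\mc{B}\n\mc{B}^{\core}=\mc{B}^{\core}$ from conditions (6) and (7) are correct and merely make explicit what the paper leaves tacit.
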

\begin{remark}
If we replace invertibility in place of unitary, still the Theorem \ref{unit} holds.
\end{remark}

\section{Solution of Multilinear system}
The Sylvester tensor equation  via the Einstein  product  plays significant roles in multilinear system \cite{RD, sun}. One can  compute
the exact solution of multinear systems by using the Kronecker product. This can be written in the following way:
\begin{equation}\label{sy1}
\mc{C}*_N\mc{X} + \mc{X}*_M\mc{D} = \mc{B},
\end{equation}
where $ \mc{C} \in \mathbb{C}^{\textbf{I}(N)\times \textbf{I}(N)}, ~\mc{X} \in
\mathbb{C}^{\textbf{I}(N)\times \textbf{I}(M)}, ~\mc{D} \in \mathbb{C}^{\textbf{I}(M)\times \textbf{I}(M)}$, and $ \mc{B} \in
\mathbb{C}^{\textbf{I}(N)\times \textbf{I}(M)}$.  
Further, the representations of block tensor were discussed in \cite{RD, sun}, as follows. 
\begin{equation}\label{sy2}
\left[
\begin{array}{cc} \mc{C} & \mc{I}_1
\end{array}
\right] *_N
\begin{bmatrix}
 \mc{X} & \mc{O} \\ \mc{O} & \mc{X}
\end{bmatrix} *_N
\left[
\begin{array}{c} \mc{I}_2 \\ \mc{D}
\end{array}
\right] = \mc{B},
\end{equation}
where $\mc{I}_1 = \mathbb{C}^{\textbf{I}(N)\times \textbf{I}(N)}$ and $\mc{I}_2 =
\mathbb{C}^{\textbf{I}(M)\times \textbf{I}(M)}$ are unit tensors. Equivalently, we have

\begin{equation}\label{sy3}
\mc{C}*_N\mc{X}*_M\mc{D} = \mc{B},
\end{equation}
where $ \mc{C} = \mathbb{C}^{\textbf{I}(N)\times \textbf{J}(N)}, $ $ \mc{X} =
\mathbb{R}^{\textbf{J}(N)\times \textbf{K}(M)}, $ $ \mc{D} = \mathbb{C}^{\textbf{K}(M)\times \textbf{L}(M)} $ and $ \mc{B} =
\mathbb{C}^{\textbf{I}(N)\times \textbf{L}(M)}$. Further, the above equation can be rewritten as
\begin{equation}\label{sy4}
    (\mc{C}\otimes\mc{D}^*)\n\mc{X}=\mc{B},
\end{equation}
where $n=N+M$ and $\otimes$ is the Kronecker product 
Let $\mc{A} \in \mathbb{C}^{\textbf{N}(n) \times \textbf{N}(n) }$ and consider the following singular tensor equation 
\begin{equation}\label{eq1.10}
   \mc{A}\n\mc{X} = \mc{B}, ~~ \mc{X},~ \mc{B} \in \mathbb{C}^{\textbf{N}(n) }.
\end{equation}

If ind$(\mc{A})=1$ and the tensor $\mc{B}\in \rg(\mc{A}),$ then Eq. (\ref{eq1.10}) is called the consistent multilinear system and its solution, we call core inverse solution or simply solution. 
Such multilinear systems arise in numerous applications in computational science and engineering such as continuum physics and engineering, isotropic
and anisotropic elasticity \cite{lai}. Multilinear systems are also prevalent in solving PDEs numerically. 
 
 \begin{lemma}\label{lm5.1} \label{lm5.2}
Let $\mc{A}\in\mathbb{C}^{\textbf{N}(n)\times\textbf{N}(n)}$  be a core tensor.\\ 
(a)~ Then  $\mc{A}^{\core}\n\mc{B}$ is a solution of $(\ref{eq1.10})$ if and only if $\mc{B} \in \rg(\mc{A})$.\\
(b)~If $\mc{B} \in \rg(\mc{A})$, then the general solution of $(\ref{eq1.10})$ is of the form 
$$\mc{X} = \mc{A}^{\core}\n\mc{B}+(\mc{I}-\mc{A}^{\core}\n\mc{A})\n\mc{Z},$$ for any arbitrary tensor $\mc{Z}\in \mathbb{C}^{\textbf{N}(n)}+\nl(\mc{A})$.
\end{lemma}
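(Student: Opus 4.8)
The plan is to first isolate the single algebraic identity that drives both parts, namely
\begin{equation*}
\mc{A}\n\mc{A}^{\core}\n\mc{A}=\mc{A},
\end{equation*}
which says that $\mc{A}^{\core}$ is in particular a $\{1\}$-inverse of $\mc{A}$. I would derive this straight from the defining relations \textup{(6)} $\mc{A}^{\core}\n\mc{A}^2=\mc{A}$ and \textup{(7)} $\mc{A}\n(\mc{A}^{\core})^2=\mc{A}^{\core}$ of Definition \ref{coredef}: replacing the trailing factor $\mc{A}$ by $\mc{A}^{\core}\n\mc{A}^2$ using \textup{(6)} gives $\mc{A}\n\mc{A}^{\core}\n\mc{A}=\mc{A}\n\mc{A}^{\core}\n(\mc{A}^{\core}\n\mc{A}^2)=\mc{A}\n(\mc{A}^{\core})^2\n\mc{A}^2$, and then \textup{(7)} collapses the leading factors $\mc{A}\n(\mc{A}^{\core})^2$ to $\mc{A}^{\core}$, leaving $\mc{A}^{\core}\n\mc{A}^2=\mc{A}$ by \textup{(6)} again. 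Along the way I would also record the companion fact $\mc{A}\n(\mc{I}-\mc{A}^{\core}\n\mc{A})=\mc{A}-\mc{A}\n\mc{A}^{\core}\n\mc{A}=\mc{O}$, so that $(\mc{I}-\mc{A}^{\core}\n\mc{A})\n\mc{Z}\in\nl(\mc{A})$ for every $\mc{Z}$.

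For part \textup{(a)}, the forward direction is immediate: if $\mc{A}^{\core}\n\mc{B}$ solves \eqref{eq1.10}, then $\mc{B}=\mc{A}\n(\mc{A}^{\core}\n\mc{B})$ exhibits $\mc{B}$ as $\mc{A}\n\mc{X}$, so $\mc{B}\in\rg(\mc{A})$ by the very definition of the range. Conversely, if $\mc{B}\in\rg(\mc{A})$ I would write $\mc{B}=\mc{A}\n\mc{Y}$ for some $\mc{Y}$; the key identity then gives $\mc{A}\n(\mc{A}^{\core}\n\mc{B})=\mc{A}\n\mc{A}^{\core}\n\mc{A}\n\mc{Y}=\mc{A}\n\mc{Y}=\mc{B}$, so $\mc{A}^{\core}\n\mc{B}$ is a solution.

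For part \textup{(b)}, I would argue by double inclusion of the solution set. First, any tensor $\mc{X}=\mc{A}^{\core}\n\mc{B}+(\mc{I}-\mc{A}^{\core}\n\mc{A})\n\mc{Z}$ is a solution: applying $\mc{A}$ on the left, the first term yields $\mc{A}\n\mc{A}^{\core}\n\mc{B}=\mc{B}$ by part \textup{(a)} (since $\mc{B}\in\rg(\mc{A})$), while the second term vanishes because $\mc{A}\n(\mc{I}-\mc{A}^{\core}\n\mc{A})=\mc{O}$. Conversely, given any solution $\mc{X}$ of $\mc{A}\n\mc{X}=\mc{B}$, I would take the arbitrary tensor to be $\mc{Z}=\mc{X}$ and verify the telescoping identity $\mc{A}^{\core}\n\mc{B}+(\mc{I}-\mc{A}^{\core}\n\mc{A})\n\mc{X}=\mc{A}^{\core}\n\mc{A}\n\mc{X}+\mc{X}-\mc{A}^{\core}\n\mc{A}\n\mc{X}=\mc{X}$, using $\mc{A}^{\core}\n\mc{B}=\mc{A}^{\core}\n\mc{A}\n\mc{X}$. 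Hence every solution has the stated form.

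The only genuinely non-routine step is establishing $\mc{A}\n\mc{A}^{\core}\n\mc{A}=\mc{A}$ purely from the three core-inverse axioms \textup{(3)}, \textup{(6)}, \textup{(7)}, since this $\{1\}$-inverse property is not one of them literally but must be coaxed out by the \textup{(6)}$\to$\textup{(7)}$\to$\textup{(6)} substitution above. Once it is in hand, both the solvability criterion in \textup{(a)} and the parametrization in \textup{(b)} reduce to short substitutions, and everything else is bookkeeping with the Einstein product and the definition $\rg(\mc{A})=\{\mc{A}\n\mc{X}\}$.
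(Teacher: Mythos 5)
Your proof is correct. Part (a) and the verification that $\mc{A}^{\core}\n\mc{B}+(\mc{I}-\mc{A}^{\core}\n\mc{A})\n\mc{Z}$ solves \eqref{eq1.10} match the paper's argument step for step, but you handle the converse of (b) by a genuinely different and cleaner route: you take $\mc{Z}=\mc{X}$ and use the telescoping identity $\mc{A}^{\core}\n\mc{B}+(\mc{I}-\mc{A}^{\core}\n\mc{A})\n\mc{X}=\mc{A}^{\core}\n\mc{A}\n\mc{X}+\mc{X}-\mc{A}^{\core}\n\mc{A}\n\mc{X}=\mc{X}$, whereas the paper first observes $\mc{Y}-\mc{A}^{\core}\n\mc{B}\in\nl(\mc{A})$ and then invokes the decomposition $\nl(\mc{A})=\rg(\mc{I}-\mc{A}^{\core}\n\mc{A})+\left(\nl(\mc{A})\cap\rg(\mc{I}-\mc{A}^{\core}\n\mc{A})^{\perp}\right)$ so as to produce a parameter of the particular form $\mc{Y}_1+\mc{Y}_2$ with $\mc{Y}_2\in\nl(\mc{A})$, matching the statement's (oddly phrased) parameter set $\mathbb{C}^{\textbf{N}(n)}+\nl(\mc{A})$. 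Since $\nl(\mc{A})\subseteq\mathbb{C}^{\textbf{N}(n)}$, that set is all of $\mathbb{C}^{\textbf{N}(n)}$ anyway, so your version with $\mc{Z}$ completely arbitrary describes the same solution set and is exactly what the paper itself restates immediately afterwards as Corollary \ref{kroapp}; your route buys a shorter argument that avoids the orthogonal-complement machinery entirely, while the paper's decomposition exists only to land in the stated parametrization. You also supply something the paper leaves implicit: the paper silently uses the $\{1\}$-inverse property $\mc{A}\n\mc{A}^{\core}\n\mc{A}=\mc{A}$ (e.g., in the step $\mc{A}\n\mc{A}^{\core}\n\mc{A}\n\mc{U}=\mc{A}\n\mc{U}$), importing it from the earlier core-inverse literature, whereas your derivation $\mc{A}\n\mc{A}^{\core}\n\mc{A}=\mc{A}\n(\mc{A}^{\core})^2\n\mc{A}^2=\mc{A}^{\core}\n\mc{A}^2=\mc{A}$ from axioms (6) and (7) of Definition \ref{coredef} is valid and makes the lemma self-contained.
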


\begin{proof}
$(a)$~Let $\mc{A}^{\core}\n\mc{B}$ is a solution of $\mc{A}\n\mc{X}=\mc{B}$. This implies $\mc{B}=\mc{A}\n\mc{A}^{\core}\n\mc{B}$. Thus $\mc{B}\in\rg(\mc{A})$. Conversely if $\mc{B}\in\rg(\mc{A})$. This yields $\mc{B}=\mc{A}\n\mc{U}$ for some $\mc{U}\in\mathbb{C}^{\textbf{N}(n)\times\textbf{N}(n)}$. Now $\mc{A}\n\mc{A}^{\core}\n\mc{B}=\mc{A}\n\mc{A}^{\core}\n\n\mc{A}\n\mc{U}=\mc{A}\n\mc{U}=\mc{B}$. Thus $\mc{A}^{\core}\n\mc{B}$ is a solution of $\mc{A}\n\mc{X}=\mc{B}$.

$(b)$~ First we will claim that $\mc{X}$ is a solution of the tensor equation \eqref{eq1.10}. Since $\mc{B}\in\rg(\mc{A})$, so by part $(a)$,  $\mc{A}^{\core}\n\mc{B}$ is a solution of \eqref{eq1.10}. That is $\mc{A}\n\mc{A}^{\core}\n\mc{B}=\mc{B}$. Let $\mc{X}=\mc{A}^{\core}\n\mc{B}+(\mc{I}-\mc{A}^{\core}\n\mc{A})\n(\mc{Z}_1+\mc{Z}_2)$, where $\mc{Z}_1\in\mathbb{C}^{\textbf{N}(n)}$ and $\mc{A}\n\mc{Z}_2=\mc{O}$. Now
\begin{center}
 $\mc{A}\n\mc{X}=\mc{A}\n\mc{A}^{\core}\n\mc{B}+(\mc{A}-\mc{A}\n\mc{A}^{\core}\n\mc{A})\n(\mc{Z}_1+\mc{Z}_2)=\mc{A}\n\mc{A}^{\core}\n\mc{B}+\mc{O}=\mc{B}$.   
\end{center}
Therefore, $\mc{X}$ is a solution of \eqref{eq1.10}. Let $\mc{Y}$ be any arbitrary solution of \eqref{eq1.10}. Clearly $\mc{Y}-\mc{A}^{\core}\n\mc{B}\in \nl(\mc{A})$ and $\rg(\mc{I}-\mc{A}^{\core}\n\mc{A})\subseteq \nl(\mc{A})$. Since $\nl(\mc{A})=\rg(\mc{I}-\mc{A}^{\core}\n\mc{A})+\left(\nl(\mc{A})\cap\rg(\mc{I}-\mc{A}^{\core}\n\mc{A})^\perp\right)$, we have $\mc{Y}-\mc{A}^{\core}\n\mc{B}=(\mc{I}-\mc{A}^{\core}\n\mc{A})\n\mc{Y}_1+\mc{Y}_2$, where $\mc{Y}_2\in \nl(\mc{A})\cap\rg(\mc{I}-\mc{A}^{\core}\n\mc{A})^\perp$. Since $\mc{Y}_2\in\nl(\mc{A})$, we have $\mc{A}\n\mc{Y}_2=\mc{O}$. Further we can write $\mc{Y}_2=\mc{Y}_2-\mc{A}^{\core}\n\mc{A}\n\mc{Y}_2=(\mc{I}-\mc{A}^{\core}\n\mc{A})\n\mc{Y}_2$. Thus $\mc{Y}-\mc{A}^{\core}\n\mc{B}=(\mc{I}-\mc{A}^{\core}\n\mc{A})\n(\mc{Y}_1+\mc{Y}_2)$, where $\mc{Y}_1\in\in\mathbb{C}^{\textbf{N}(n)}$ and $\mc{Y}_2\in\nl(\mc{A})$. Hence completes the proof.
\end{proof}

Now we discuss the general solution of \eqref{eq1.10}, as follows.

\begin{corollary}\label{kroapp}
Let  $\mc{A} \in \mathbb{C}^{\textbf{N}(n) \times \textbf{N}(n) }$ be a core tensor. Then
$\mc{A}\n \mc{X}=\mc{B}$ has a solution if and only if $\mc{A}\n \mc{A}^{\core}\n\mc{B} = \mc{B}$. If a solution exists, then every solution is of the form
\begin{equation}
\mc{X}=\mc{A}^{\core}\n\mc{B} + (\mc{I}- \mc{A}^{\core} \n \mc{A})\n\mc{Y},
\end{equation}
where $\mc{Y}$ is  arbitrary.
\end{corollary}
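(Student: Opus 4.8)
The plan is to derive this statement directly as a reformulation of Lemma \ref{lm5.1}, the only real work being to reconcile the solvability criterion $\mc{A}\n\mc{A}^{\core}\n\mc{B}=\mc{B}$ with the range condition $\mc{B}\in\rg(\mc{A})$ used there, and then to observe that the two descriptions of the solution set coincide.

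First I would establish the equivalence of the two solvability criteria. If $\mc{A}\n\mc{A}^{\core}\n\mc{B}=\mc{B}$, then writing $\mc{B}=\mc{A}\n(\mc{A}^{\core}\n\mc{B})$ shows immediately that $\mc{B}\in\rg(\mc{A})$. Conversely, Lemma \ref{lm5.1}$(a)$ asserts that $\mc{B}\in\rg(\mc{A})$ forces $\mc{A}^{\core}\n\mc{B}$ to be a solution, i.e. $\mc{A}\n\mc{A}^{\core}\n\mc{B}=\mc{B}$. Hence the two conditions are interchangeable.

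Next I would settle the ``has a solution'' phrasing. For one direction, whenever $\mc{A}\n\mc{A}^{\core}\n\mc{B}=\mc{B}$ holds, the tensor $\mc{X}=\mc{A}^{\core}\n\mc{B}$ is an explicit solution. For the other direction, if $\mc{A}\n\mc{X}_0=\mc{B}$ for some $\mc{X}_0$, then $\mc{B}=\mc{A}\n\mc{X}_0\in\rg(\mc{A})$, and the equivalence just proved returns $\mc{A}\n\mc{A}^{\core}\n\mc{B}=\mc{B}$. This yields the first assertion of the corollary.

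Finally, for the parametrization of all solutions, I would invoke Lemma \ref{lm5.1}$(b)$ essentially verbatim, since $\mc{B}\in\rg(\mc{A})$ has been shown equivalent to the stated solvability condition; the general solution is therefore $\mc{X}=\mc{A}^{\core}\n\mc{B}+(\mc{I}-\mc{A}^{\core}\n\mc{A})\n\mc{Y}$. The only cosmetic point is that the lemma constrains the free parameter to $\mathbb{C}^{\textbf{N}(n)}+\nl(\mc{A})$; since $\nl(\mc{A})\subseteq\mathbb{C}^{\textbf{N}(n)}$ this set is exactly $\mathbb{C}^{\textbf{N}(n)}$, so replacing $\mc{Z}$ there by an arbitrary $\mc{Y}$ changes nothing. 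I do not anticipate any genuine obstacle: the entire content is already contained in Lemma \ref{lm5.1}, and the corollary merely repackages its range hypothesis into the computationally convenient projector identity $\mc{A}\n\mc{A}^{\core}\n\mc{B}=\mc{B}$.
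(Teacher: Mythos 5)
Your proposal is correct and follows exactly the route the paper intends: the corollary is stated there without proof as an immediate consequence of Lemma \ref{lm5.1}, and your reduction --- showing $\mc{A}\n\mc{A}^{\core}\n\mc{B}=\mc{B}$ is equivalent to $\mc{B}\in\rg(\mc{A})$ and then invoking parts $(a)$ and $(b)$ of the lemma --- is precisely that derivation. Your side remark that the parameter set $\mathbb{C}^{\textbf{N}(n)}+\nl(\mc{A})$ collapses to $\mathbb{C}^{\textbf{N}(n)}$ correctly resolves the only notational discrepancy between the lemma and the corollary.
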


In view of Theorem \ref{revkron} and Lemma \ref{lm5.2}, we state the following result as a corollary.
\begin{corollary}\label{cor4.3}
Let  $\mc{C},~\mc{D}\in \mathbb{C}^{\textbf{N}(n) \times \textbf{N}(n) }$ and $ind(\mc{C}\otimes\mc{D}^*)=1$. If $\mc{B}\in \rg(\mc{C}\otimes\mc{D}^*)$, then the general solution of \eqref{sy3} is of the form $\mc{X} = \left(\mc{C}^{\core}\otimes (\mc{D}^*)^{\core}\right)\n\mc{B}+\left(\mc{I}-(\mc{C}^{\core}\n\mc{C})\otimes(\mc{D}^*)^{\core}\n\mc{D}^*)\right)\n\mc{Z}$, where $\mc{Z}\in \mathbb{C}^{\textbf{N}(n)}+\nl(\mc{C}\otimes\mc{D}^*)$.
\end{corollary}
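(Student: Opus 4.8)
The plan is to recognize Corollary \ref{cor4.3} as the direct specialization of Lemma \ref{lm5.2}(b) to the coefficient tensor $\mc{A}=\mc{C}\otimes\mc{D}^*$, combined with the Kronecker splitting of the core inverse supplied by Theorem \ref{revkron}. First I would invoke the reduction \eqref{sy4}, which rewrites the Sylvester-type equation \eqref{sy3} as the single multilinear system $(\mc{C}\otimes\mc{D}^*)\n\mc{X}=\mc{B}$ with $n=N+M$. Writing $\mc{A}=\mc{C}\otimes\mc{D}^*$, the hypothesis $ind(\mc{A})=1$ makes $\mc{A}$ a core tensor, so $\mc{A}^{\core}$ exists and the solution theory of Lemma \ref{lm5.2} is available.

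Since $\mc{B}\in\rg(\mc{A})$ by assumption, part (b) of Lemma \ref{lm5.2} immediately yields the general solution
\[
\mc{X}=\mc{A}^{\core}\n\mc{B}+(\mc{I}-\mc{A}^{\core}\n\mc{A})\n\mc{Z},\qquad \mc{Z}\in\mathbb{C}^{\textbf{N}(n)}+\nl(\mc{A}).
\]
The rest is a substitution. By Theorem \ref{revkron}, $\mc{A}^{\core}=(\mc{C}\otimes\mc{D}^*)^{\core}=\mc{C}^{\core}\otimes(\mc{D}^*)^{\core}$, giving the leading term $\left(\mc{C}^{\core}\otimes(\mc{D}^*)^{\core}\right)\n\mc{B}$. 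For the projector I would apply the mixed-product rule of Proposition \ref{Krop}(c) to get $\mc{A}^{\core}\n\mc{A}=(\mc{C}^{\core}\otimes(\mc{D}^*)^{\core})\n(\mc{C}\otimes\mc{D}^*)=(\mc{C}^{\core}\n\mc{C})\otimes((\mc{D}^*)^{\core}\n\mc{D}^*)$, so that $\mc{I}-\mc{A}^{\core}\n\mc{A}$ matches the displayed expression; substituting then completes the formula.

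The only point that requires care is the legitimacy of the Kronecker splitting $\mc{A}^{\core}=\mc{C}^{\core}\otimes(\mc{D}^*)^{\core}$, because Theorem \ref{revkron} is stated for factors $\mc{C}$ and $\mc{D}^*$ that are individually core tensors, whereas the corollary hypothesizes only $ind(\mc{C}\otimes\mc{D}^*)=1$. I expect this to be the main (and essentially only) obstacle. I would close it by showing that index one of the product forces index one of each nonzero factor: using Proposition \ref{Krop}(c) one has $(\mc{C}\otimes\mc{D}^*)^k=\mc{C}^k\otimes(\mc{D}^*)^k$, and the range behaviour of the Kronecker product then makes the stabilisation $\rg(\mc{C}\otimes\mc{D}^*)=\rg((\mc{C}\otimes\mc{D}^*)^2)$ equivalent to the simultaneous stabilisation of $\rg(\mc{C}^k)$ and $\rg((\mc{D}^*)^k)$ at $k=1$. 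With both factors core, Theorem \ref{revkron} applies and the corollary follows as an immediate consequence.
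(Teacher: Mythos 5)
Your proposal is correct and takes exactly the paper's route: the paper offers no separate proof of Corollary \ref{cor4.3}, stating it ``in view of Theorem \ref{revkron} and Lemma \ref{lm5.2},'' which is precisely your argument --- rewrite \eqref{sy3} as $(\mc{C}\otimes\mc{D}^*)\n\mc{X}=\mc{B}$ via \eqref{sy4}, apply Lemma \ref{lm5.2}(b) with $\mc{A}=\mc{C}\otimes\mc{D}^*$, split $\mc{A}^{\core}=\mc{C}^{\core}\otimes(\mc{D}^*)^{\core}$ by Theorem \ref{revkron}, and simplify the projector with Proposition \ref{Krop}(c). In fact you are more careful than the paper, which silently ignores that Theorem \ref{revkron} requires $\mc{C}$ and $\mc{D}^*$ to be core tensors while the corollary only hypothesizes $ind(\mc{C}\otimes\mc{D}^*)=1$.

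One caveat on the patch you propose for that mismatch: the claim that index one of the product forces index one of \emph{each nonzero factor} is false in a degenerate case, because the other factor may be zero. Take $\mc{D}^*=\mc{O}$ and $\mc{C}\neq\mc{O}$ with $\mc{C}^2=\mc{O}$; then $\mc{C}\otimes\mc{D}^*=\mc{O}$ has index one, yet the nonzero factor $\mc{C}$ has index two, so the stabilisation of $\rg(\mc{C}\otimes\mc{D}^*)$ is not \emph{equivalent} to simultaneous stabilisation of the factors, as your last paragraph asserts. The fix is to assume both factors nonzero (harmless here, since the displayed formula presupposes that $\mc{C}^{\core}$ and $(\mc{D}^*)^{\core}$ exist, and the zero case makes the system trivial): under the matricization isomorphism the tensor rank of \cite{stan} is multiplicative over $\otimes$, so
\begin{equation*}
\mathrm{rank}(\mc{C})\,\mathrm{rank}(\mc{D}^*)=\mathrm{rank}(\mc{C}\otimes\mc{D}^*)=\mathrm{rank}\bigl((\mc{C}\otimes\mc{D}^*)^2\bigr)=\mathrm{rank}(\mc{C}^2)\,\mathrm{rank}\bigl((\mc{D}^*)^2\bigr),
\end{equation*}
and since $\mathrm{rank}(\mc{C}^2)\leq\mathrm{rank}(\mc{C})$ and $\mathrm{rank}((\mc{D}^*)^2)\leq\mathrm{rank}(\mc{D}^*)$ with all four quantities positive, equality of the products forces equality factorwise, giving $ind(\mc{C})=ind(\mc{D}^*)=1$ as you intended. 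With that one-line repair your argument is complete, and it genuinely closes a gap the paper leaves open.
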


\begin{theorem}\label{thm5.1}
Let  $\mc{A} \in \mathbb{C}^{\textbf{N}(n) \times \textbf{N}(n) }$ be a core tensor and $\mc{B}\in\rg(\mc{A})$. Then the singular tensor equation
$\mc{A}\n\mc{X} = \mc{B}$, has unique solution in $\rg(\mc{A})$, and is given by 
$\mc{X} = \mc{A}^{\core}\n\mc{B}$.
\end{theorem}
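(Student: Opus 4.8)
The plan is to lean on Lemma \ref{lm5.2} for solvability and then to pin down $\mc{A}^{\core}\n\mc{B}$ as the unique solution lying inside $\rg(\mc{A})$. Since $\mc{B}\in\rg(\mc{A})$, part (a) of Lemma \ref{lm5.2} already tells us that $\mc{A}^{\core}\n\mc{B}$ solves $\mc{A}\n\mc{X}=\mc{B}$, so existence of a solution is in hand. What remains is to place this solution inside $\rg(\mc{A})$ and to argue that no other element of $\rg(\mc{A})$ can satisfy the equation.

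First I would verify the membership $\mc{A}^{\core}\n\mc{B}\in\rg(\mc{A})$. The defining property (7) of the core inverse applied to $\mc{A}$ reads $\mc{A}\n(\mc{A}^{\core})^2=\mc{A}^{\core}$, whence
\[
\mc{A}^{\core}\n\mc{B}=\mc{A}\n\left((\mc{A}^{\core})^2\n\mc{B}\right),
\]
which exhibits $\mc{A}^{\core}\n\mc{B}$ in the form $\mc{A}\n\mc{Y}$ with $\mc{Y}=(\mc{A}^{\core})^2\n\mc{B}\in\mathbb{C}^{\textbf{N}(n)}$. By the very definition of $\rg(\mc{A})$ this shows $\mc{A}^{\core}\n\mc{B}\in\rg(\mc{A})$, so the proposed solution genuinely lives in the required subspace.

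For uniqueness, the key observation is that $\mc{A}^{\core}\n\mc{A}$ restricts to the identity on $\rg(\mc{A})$, a fact that follows from property (6), namely $\mc{A}^{\core}\n\mc{A}^2=\mc{A}$. Concretely, let $\mc{X}\in\rg(\mc{A})$ be any solution and write $\mc{X}=\mc{A}\n\mc{W}$. Then
\[
\mc{X}=\mc{A}\n\mc{W}=\mc{A}^{\core}\n\mc{A}^2\n\mc{W}=\mc{A}^{\core}\n\mc{A}\n\mc{X}=\mc{A}^{\core}\n\mc{B},
\]
where the second equality uses (6) and the last uses $\mc{A}\n\mc{X}=\mc{B}$. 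Hence every solution belonging to $\rg(\mc{A})$ coincides with $\mc{A}^{\core}\n\mc{B}$, which simultaneously establishes uniqueness within $\rg(\mc{A})$ and identifies the solution.

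I do not anticipate a serious obstacle here, since the statement falls out quickly once Lemma \ref{lm5.2} is available; the one point deserving genuine care is confirming that $\mc{A}^{\core}\n\mc{B}$ actually sits in $\rg(\mc{A})$ rather than merely solving the equation, together with the implicit complementarity of $\rg(\mc{A})$ and $\nl(\mc{A})$ for an index-one tensor that underlies the uniqueness step. Both points reduce cleanly to the core-inverse identities (6) and (7), so the proof stays entirely within the machinery already developed.
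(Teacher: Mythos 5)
Your proof is correct, and its uniqueness step takes a genuinely different route from the paper's. The existence part and the membership $\mc{A}^{\core}\n\mc{B}\in\rg(\mc{A})$ match the paper exactly (Lemma \ref{lm5.1}(a) plus the identity $\mc{A}^{\core}=\mc{A}\n(\mc{A}^{\core})^2$ from axiom (7)). For uniqueness, however, the paper takes an arbitrary solution $\mc{X}\in\rg(\mc{A})$, invokes the general-solution parametrization of Corollary \ref{kroapp} to write $\mc{X}-\mc{A}^{\core}\n\mc{B}=(\mc{I}-\mc{A}^{\core}\n\mc{A})\n\mc{Z}$, deduces $\mc{X}-\mc{A}^{\core}\n\mc{B}\in\nl(\mc{A})$, and concludes via the intersection claim $\rg(\mc{A})\cap\nl(\mc{A})=\{\mc{O}\}$ --- a complementarity fact for index-one tensors that the paper asserts without proof at that point. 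You instead write $\mc{X}=\mc{A}\n\mc{W}$ and compute directly
\begin{equation*}
\mc{X}=\mc{A}\n\mc{W}=\mc{A}^{\core}\n\mc{A}^2\n\mc{W}=\mc{A}^{\core}\n\mc{A}\n\mc{X}=\mc{A}^{\core}\n\mc{B},
\end{equation*}
using only axiom (6). This is shorter, stays entirely within the core-inverse axioms, and avoids both the general-solution lemma and the unproved intersection claim; indeed, it yields $\rg(\mc{A})\cap\nl(\mc{A})=\{\mc{O}\}$ as a byproduct, since any $\mc{X}\in\rg(\mc{A})$ with $\mc{A}\n\mc{X}=\mc{O}$ gives $\mc{X}=\mc{A}^{\core}\n\mc{O}=\mc{O}$. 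What the paper's route buys instead is visibility of the structure of the full solution set (particular solution plus null-space part), which it reuses throughout Section 5. One small remark: your closing comment that the ``implicit complementarity of $\rg(\mc{A})$ and $\nl(\mc{A})$'' underlies the uniqueness step slightly undersells your own argument --- your direct computation does not rely on that complementarity at all; it proves it.
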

\begin{proof}
Clearly $\mc{A}^{\core}\n\mc{B}\in\rg(\mc{A})$ since $\mc{A}^{\core}\n\mc{B}=\mc{A}\n(\mc{A}^{\core})^2\n\mc{B}$. By Lemma \ref{lm5.1}, $\mc{A}^{\core}\n\mc{B}$ is a solution in $\rg(\mc{A})$. Next we will show the uniqueness of the solution. Let $\mc{X} \in \rg(\mc{A})$ be any solution. This implies $\mc{X}-\mc{A}^{\core}\n\mc{B}\in\rg(\mc{A})$.  
Further, from Corollary \ref{lm5.2}, we obtain $\mc{X}-\mc{A}^{\core}\n\mc{B}=(\mc{I}-\mc{A}^{\core}\n\mc{A})\n\mc{Z}$ for some $\mc{Z}$. Now $\mc{A}\n(\mc{X}-\mc{A}^{\core}\n\mc{B})=(\mc{A}-\mc{A}\n\mc{A}^{\core}\n\mc{A})\n\mc{Z}=\mc{O}$. Thus $\mc{X}-\mc{A}^{\core}\n\mc{B}\in\nl(\mc{A})$. Hence $\mc{X}-\mc{A}^{\core}\n\mc{B}\in \rg(\mc{A})\cap \nl(\mc{A}) = \{\mc{O}\}$. Therefore, the solution  $\mc{A}^{\core}\n\mc{B}$ is unique.
\end{proof}

\begin{theorem} \label{sythm}
Let $\mc{A} \in \mathbb{C}^{\textbf{N}(n) \times \textbf{N}(n)},~
    \mc{X} \in \mathbb{C}^{\textbf{N}(n) \times \textbf{N}(n)},~
    \mc{B}  \in \mathbb{C}^{\textbf{N}(n) \times \textbf{N}(n)}$ and
   $\mc{D} \in \mathbb{C}^{\textbf{N}(n) \times \textbf{N}(n)}$. Then the tensor equation
$\mc{C}\n\mc{X}\n\mc{D} = \mc{B}$;
\begin{itemize}
\item[(a)] is solvable if and only if  there exist $\mc{C}^{\core}$ and $\mc{D}^{\core}$
such that $$\mc{C}\n\mc{C}^{\core}\n\mc{B}\n\mc{D}^{\core}\n\mc{D} =
\mc{B},$$
\item[(b)] in this case, the general solution is
\begin{equation}\label{eqth}
\mc{X} = \mc{C}^{\core}\n\mc{B}\n\mc{D}^{\core} + \mc{Z} -
\mc{C}^{\core}\n\mc{C}\n\mc{Z}\n\mc{D}\n\mc{D}^{\core},
\end{equation}
where $\mc{Z}\in \mathbb{C}^{\textbf{N}(n) \times \textbf{N}(n)}$ is an arbitrary tensor.
\end{itemize}
\end{theorem}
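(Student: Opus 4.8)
The plan is to reduce the whole statement to the two one-sided identities $\mc{C}\n\mc{C}^{\core}\n\mc{C}=\mc{C}$ and $\mc{D}\n\mc{D}^{\core}\n\mc{D}=\mc{D}$, which express that a core inverse is in particular a $\{1\}$-inverse. First I would record these facts. Starting from axioms (6) and (7) of Definition \ref{coredef}, one substitutes $\mc{C}=\mc{C}^{\core}\n\mc{C}^2$ and computes
\[
\mc{C}\n\mc{C}^{\core}\n\mc{C}=\mc{C}\n\mc{C}^{\core}\n(\mc{C}^{\core}\n\mc{C}^2)=\mc{C}\n(\mc{C}^{\core})^2\n\mc{C}^2=\mc{C}^{\core}\n\mc{C}^2=\mc{C},
\]
and likewise $\mc{D}\n\mc{D}^{\core}\n\mc{D}=\mc{D}$; together with axiom (3) this shows $\mc{C}^{\core}\in\mc{C}\{1,3\}$ and $\mc{D}^{\core}\in\mc{D}\{1,3\}$ (the same conclusion is immediate from Lemma \ref{lm2.11}).

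For part (a) the argument splits into two directions. For necessity, assume $\mc{C}\n\mc{X}\n\mc{D}=\mc{B}$ for some $\mc{X}$ and collapse the outer factors using the identities above:
\[
\mc{C}\n\mc{C}^{\core}\n\mc{B}\n\mc{D}^{\core}\n\mc{D}=\mc{C}\n\mc{C}^{\core}\n\mc{C}\n\mc{X}\n\mc{D}\n\mc{D}^{\core}\n\mc{D}=\mc{C}\n\mc{X}\n\mc{D}=\mc{B}.
\]
For sufficiency, assume $\mc{C}\n\mc{C}^{\core}\n\mc{B}\n\mc{D}^{\core}\n\mc{D}=\mc{B}$ and exhibit $\mc{X}_0:=\mc{C}^{\core}\n\mc{B}\n\mc{D}^{\core}$ as a particular solution, since $\mc{C}\n\mc{X}_0\n\mc{D}=\mc{C}\n\mc{C}^{\core}\n\mc{B}\n\mc{D}^{\core}\n\mc{D}=\mc{B}$.

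For part (b) I would again argue in two directions. First, every tensor of the stated form solves the equation: substituting \eqref{eqth} into $\mc{C}\n\mc{X}\n\mc{D}$ and using $\mc{C}\n\mc{C}^{\core}\n\mc{C}=\mc{C}$ and $\mc{D}\n\mc{D}^{\core}\n\mc{D}=\mc{D}$, the term $\mc{C}\n\mc{C}^{\core}\n\mc{C}\n\mc{Z}\n\mc{D}\n\mc{D}^{\core}\n\mc{D}$ collapses to $\mc{C}\n\mc{Z}\n\mc{D}$ and cancels the free $\mc{Z}$-contribution, leaving $\mc{C}\n\mc{C}^{\core}\n\mc{B}\n\mc{D}^{\core}\n\mc{D}=\mc{B}$ by part (a). Conversely, given an arbitrary solution $\mc{X}$, I would set $\mc{Z}=\mc{X}$ in \eqref{eqth}; then $\mc{C}^{\core}\n\mc{C}\n\mc{X}\n\mc{D}\n\mc{D}^{\core}=\mc{C}^{\core}\n\mc{B}\n\mc{D}^{\core}$ because $\mc{C}\n\mc{X}\n\mc{D}=\mc{B}$, so the right-hand side of \eqref{eqth} telescopes to $\mc{X}$, proving that the parametrization captures every solution.

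The computations are entirely routine, so there is no genuine analytic obstacle; the only point requiring care is to invoke the core inverse solely through the properties it actually enjoys. It is a $\{1,3\}$-inverse, which is exactly what legitimizes the cancellations $\mc{C}\n\mc{C}^{\core}\n\mc{C}=\mc{C}$ and $\mc{D}\n\mc{D}^{\core}\n\mc{D}=\mc{D}$ used throughout, and no dual identity is needed. Implicit in the hypothesis of (a) is that $\mc{C}$ and $\mc{D}$ are core invertible (that is, of index one), which guarantees that $\mc{C}^{\core}$ and $\mc{D}^{\core}$ exist and that the above manipulations are meaningful.
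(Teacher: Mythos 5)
Your proof is correct and follows essentially the same route as the paper: part (a) via the particular solution $\mc{C}^{\core}\n\mc{B}\n\mc{D}^{\core}$ together with the cancellation $\mc{C}\n\mc{C}^{\core}\n\mc{C}=\mc{C}$, $\mc{D}\n\mc{D}^{\core}\n\mc{D}=\mc{D}$, and part (b) by checking the parametrization solves the equation and then telescoping an arbitrary solution $\mc{X}$ into the form \eqref{eqth} with $\mc{Z}=\mc{X}$. Your one addition — explicitly deriving the $\{1\}$-inverse property $\mc{C}\n\mc{C}^{\core}\n\mc{C}=\mc{C}$ from axioms (6) and (7) of Definition \ref{coredef}, which the paper uses without comment — is a sound and welcome clarification, not a different argument.
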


\begin{proof}
$(a)$ Let $\mc{C}\n\mc{C}^{\core}\n\mc{B}\n\mc{D}^{\core}\n\mc{D} =
\mc{B}$. and $\mc{Y}=\mc{C}^{\core}\n\mc{B}\n\mc{D}^{\core}$. Now 
$\mc{C}\n\mc{Y}\n\mc{D}=\mc{C}\n\mc{C}^{\core}\n\mc{B}\n\mc{D}^{\core}\n\mc{D}$. Thus $\mc{Y}$ is a solution of $\mc{C}\n\mc{X}\n\mc{D} = \mc{B}$. Conversely, if $\mc{U}$ is a solution of  $\mc{C}\n\mc{X}\n\mc{D} = \mc{B}$, then  
\begin{eqnarray*}
\mc{B}&=&\mc{C}\n\mc{U}\n\mc{D}=\mc{C}\n\mc{C}^{\core}\n\mc{C}\n\mc{U}\n\mc{D}\n\mc{D}^{\core}\n\mc{D}=\mc{C}\n\mc{C}^{\core}\n\mc{B}\n\mc{D}^{\core}\n\mc{D}.
\end{eqnarray*}
$(b)$ Suppose the multilinear system $\mc{C}\n\mc{X}\n\mc{D} = \mc{B}$ is solvable. Let $\mc{X} = \mc{C}^{\core}\n\mc{B}\n\mc{D}^{\core} + \mc{Z} -
\mc{C}^{\core}\n\mc{C}\n\mc{Z}\n\mc{D}\n\mc{D}^{\core}$.  Since $\mc{C}\n\mc{X}\n\mc{D}=\mc{C}\n( \mc{C}^{\core}\n\mc{B}\n\mc{D}^{\core} + \mc{Z} -
\mc{C}^{\core}\n\mc{C}\n\mc{Z}\n\mc{D}\n\mc{D}^{\core})\n\mc{D}\\
=\mc{B}+\mc{C}\n\mc{Z}\n\mc{D}- \mc{C}\n\mc{C}^{\core}\n\mc{C}\n\mc{Z}\n\mc{D}\n\mc{D}^{\core})\n\mc{D}=\mc{B}$. Thus $\mc{X}$ is a solution of \eqref{sy3}. Further consider $\mc{Y}$ be any arbitrary solution of solution of  \eqref{sy3}. Since
\begin{equation*}
  \mc{Y}=\mc{C}^{\core}\n\mc{B}\n\mc{D}^{\core}+\mc{Y}-\mc{C}^{\core}\n\mc{B}\n\mc{D}^{\core}\\
=\mc{C}^{\core}\n\mc{B}\n\mc{D}^{\core}+\mc{Y}-\mc{C}^{\core}\n\mc{C}\n\mc{Y}\n\mc{D}\n\mc{D}^{\core},  \end{equation*}
so the general solution is of the form \eqref{eqth}.
\end{proof}

In order to show how the core inverse of tensors are employed in the two dimensional Poisson problem in the multilinear system framework.  We present an example to illustrate our result.

\begin{example}\label{pde1}
Consider the following partial differential equation
\begin{equation}\label{PDE11}
    \frac{\partial^2 u}{\partial x^2}+\frac{\partial^2 u}{\partial y^2}=f(x,y),~(x,y)\in \Omega=[0,1]\times [0,1]
\end{equation}
 with Neumann boundary conditions. If we apply 5-point stencil central difference scheme on a uniform grid with $m^2$ nodes, we obtain the following tensor equation 
\begin{figure}[t!]\label{exa-1}
\begin{center}
\begin{tabular}{cc}
\subfigure[] { \includegraphics[width=0.45\textwidth]{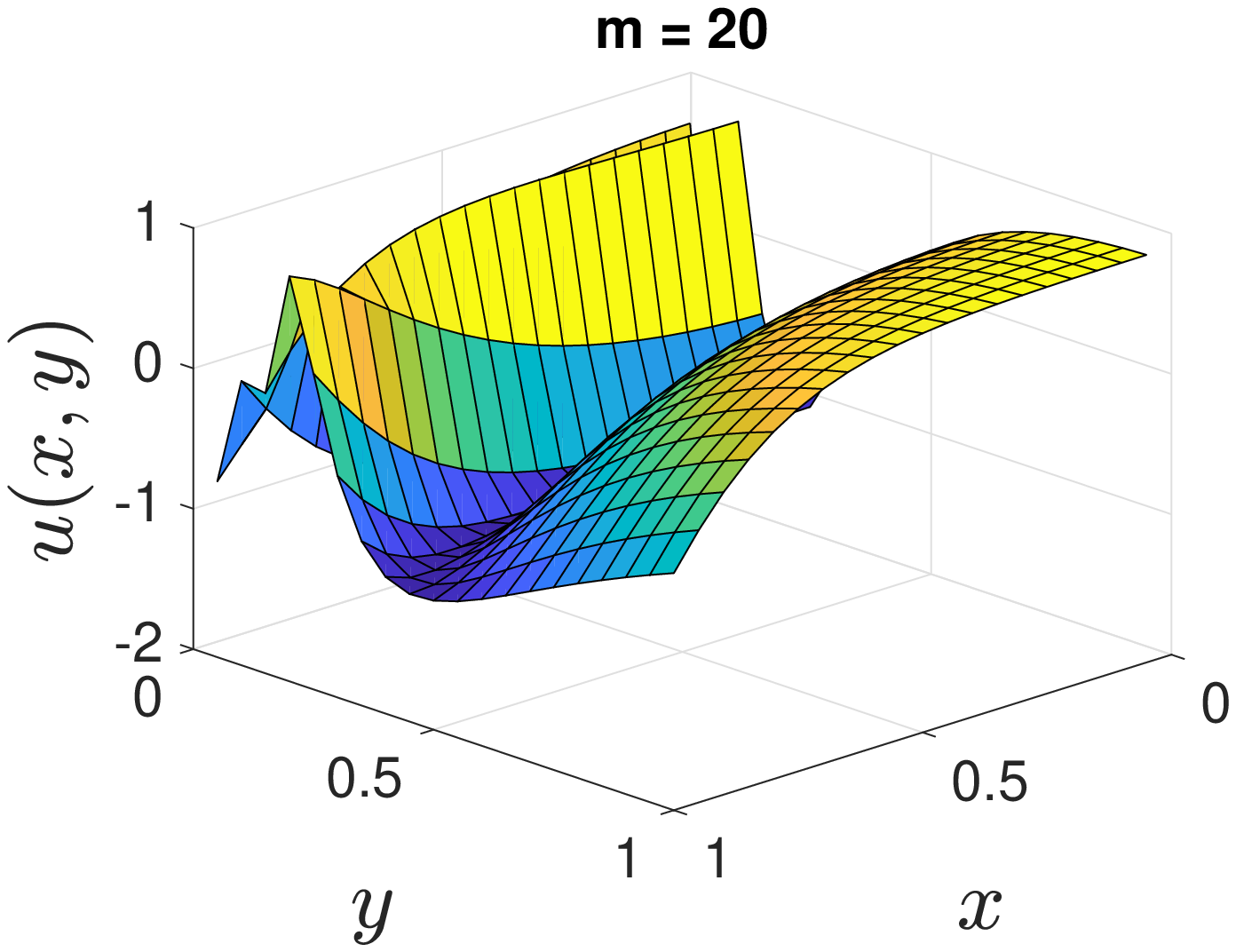}}&
\subfigure[] { \includegraphics[width=0.45\textwidth]{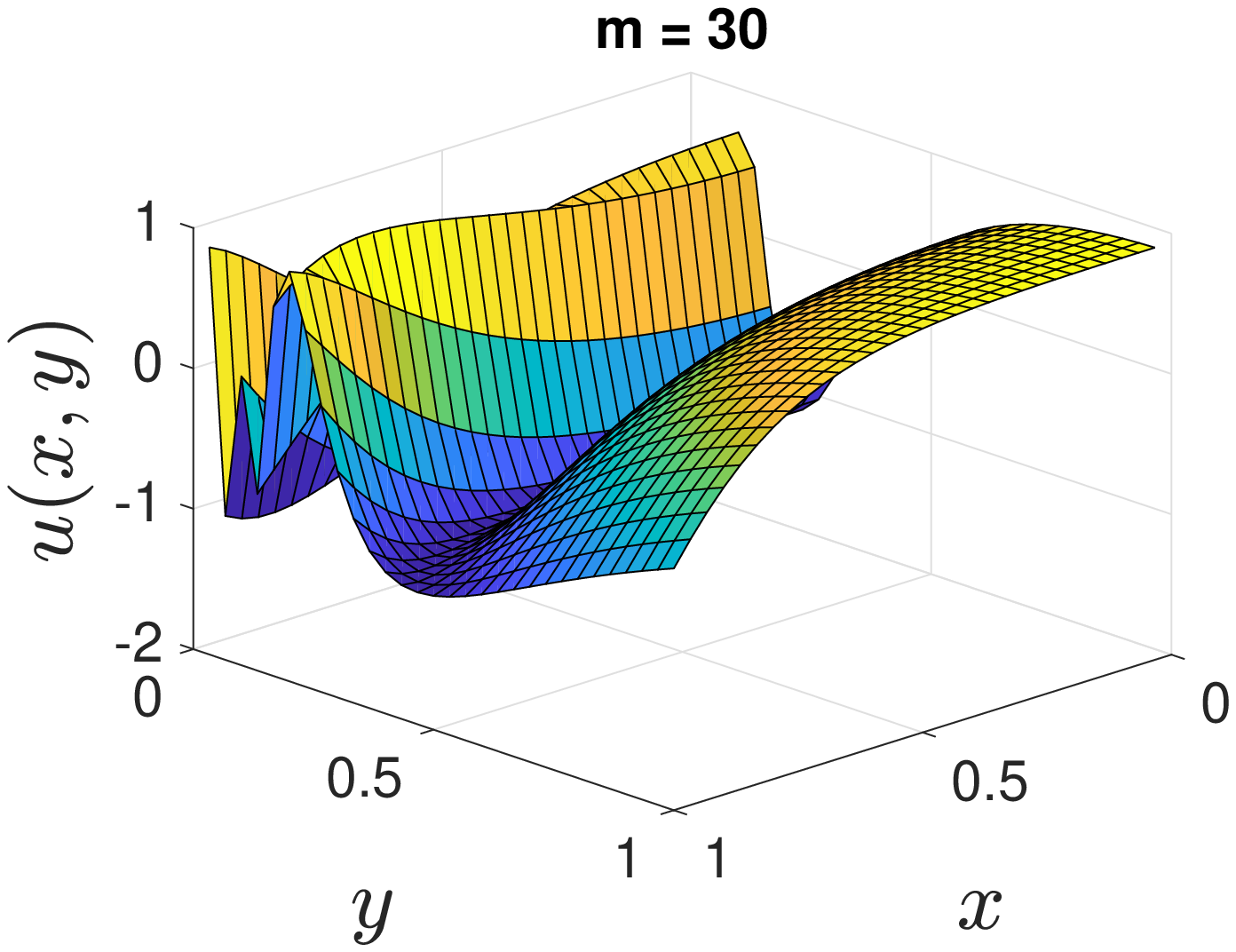}}\\
\subfigure[] { \includegraphics[width=0.45\textwidth]{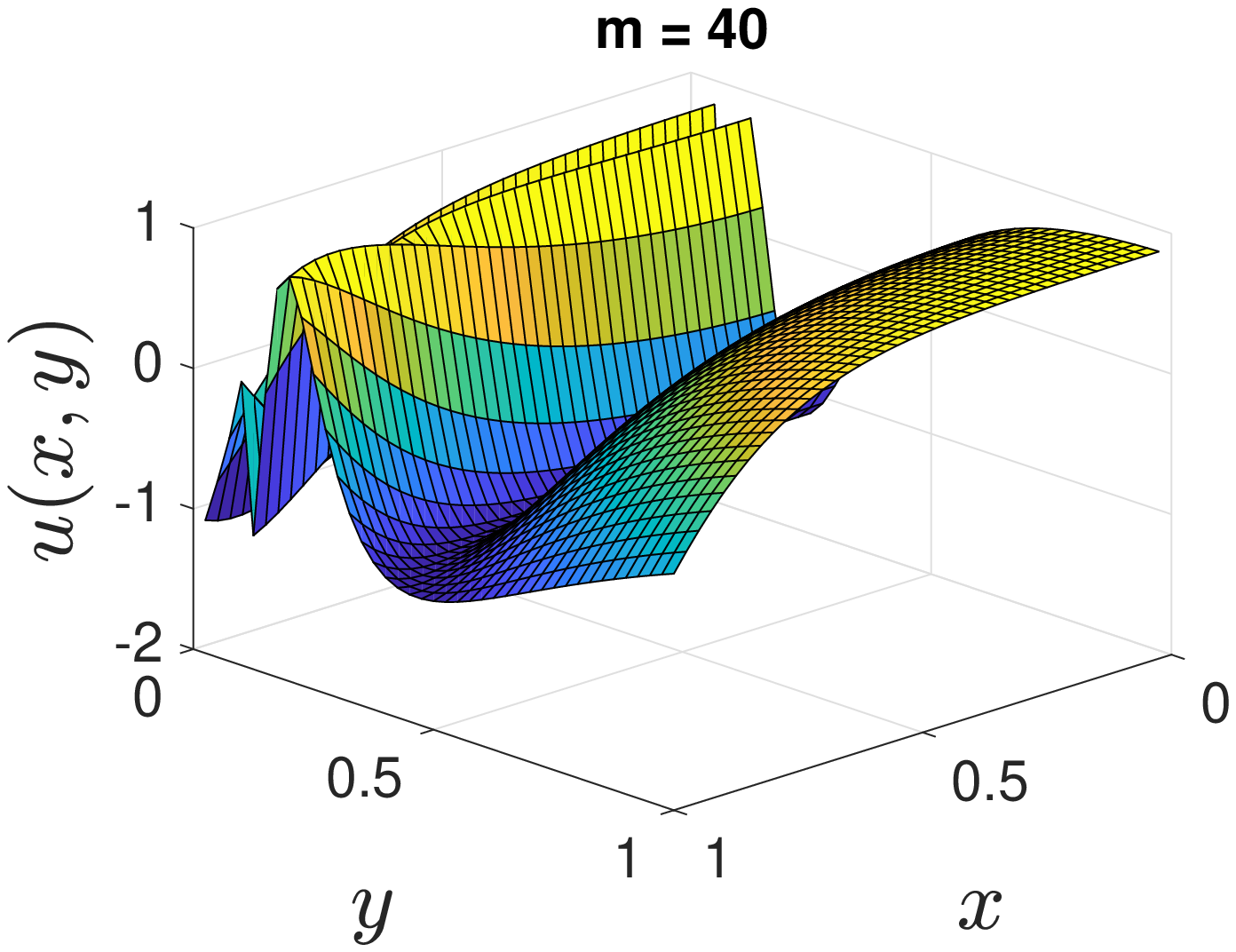}}&
\subfigure[] { \includegraphics[width=0.45\textwidth]{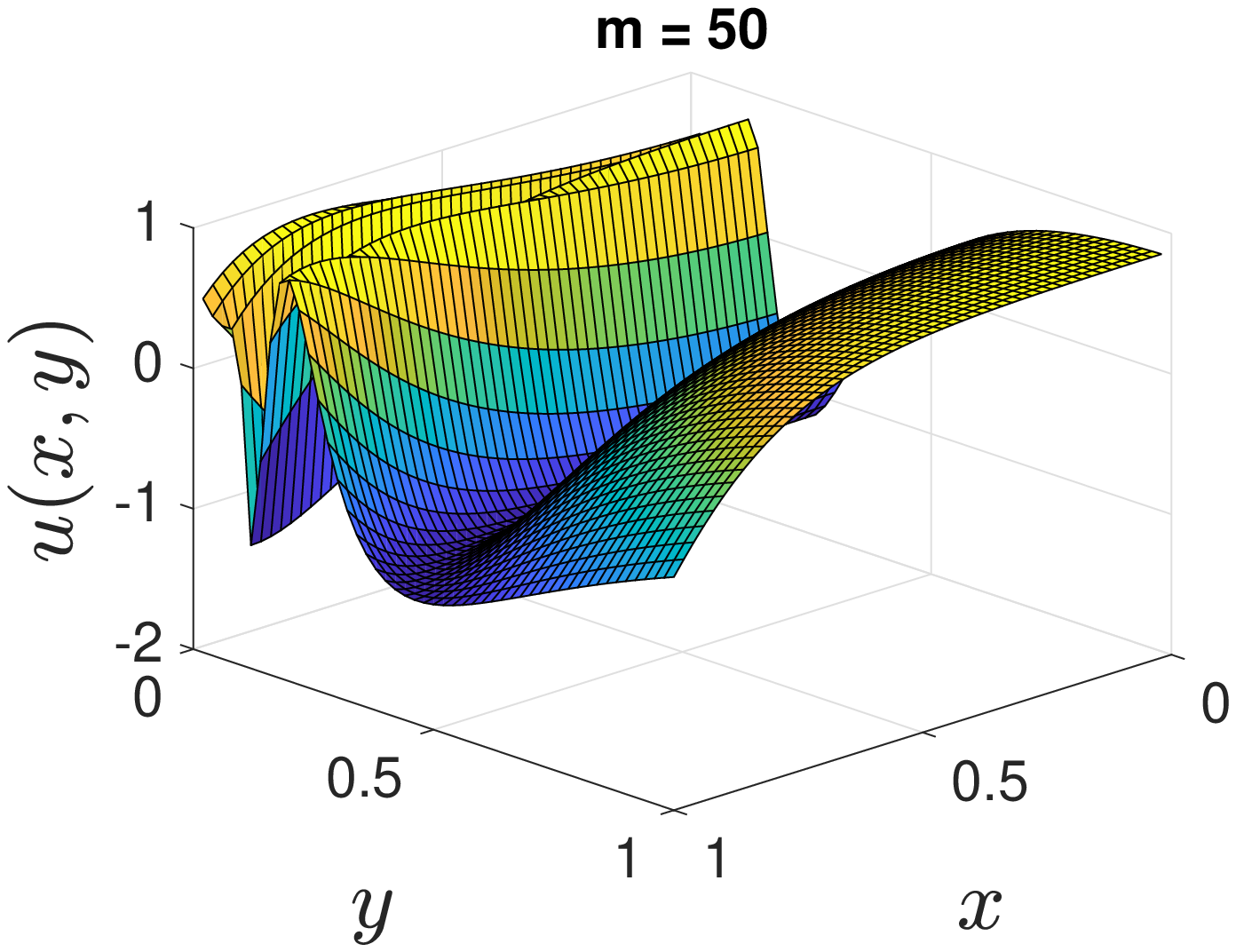}}
\end{tabular}
\caption{Solution of the multilinear system for different values of $m$.}
\label{finalbell}
\end{center}
\end{figure}
\begin{equation}\label{1stexam}
     \mc{A}*_2\mc{X}=\mc{B}, ~\mc{X}=(u_{kl})\in\mathbb{R}^{m\times m} \textnormal{~and~}  \mc{B}=(b_{ij})\in\mathbb{R}^{m\times m}, 
 \end{equation}
and the tensor $\mc{A}=(a_{ijkl})\in\mathbb{R}^{m\times m\times m\times m}$ is of the form 
\begin{equation}\label{F-ex-ten}
\mc{A}=\mc{I}_m\kronecker \mc{P} +\mc{Q}\kronecker \mc{I}_m+\mc{D},
\end{equation}
where $\mc{I}_m \in \mathbb{R}^{m\times m}$ is the second order identity tensor. The second order tensors $\mc{P}\in\mathbb{R}^{m\times m}$ and $\mc{Q}\in\mathbb{R}^{m\times m}$ are of the form
\begin{equation*}
    \mc{P}=tridiagonal\left(-1,0,-1\right) = \left( \begin{array}{cccc}
0 & -1 & & 0\\
-1 & \ddots & \ddots & \\
& \ddots & \ddots & -1 \\
0 & & -1 & 0 \end{array} \right)=
 \mc{Q}.
\end{equation*}
Further, the tensor $\mc{D}\in\mathbb{R}^{m\times m\times m\times m}$ is a diagonal tensor, where the diagonal elements will change with respect to number of grid points.   
From the representation \eqref{F-ex-ten} (the coefficient tensor $\mc{A}$), it is clear that $ind(\mc{A}) = 1$. Thus the solution of the multilinear system \eqref{1stexam} becomes $\mc{X}=\mc{A}^{\core}*_2\mc{B}.$  
We consider a tensor $\mc{B}$ from $\rg(A),$ and calculate the approximate solution of the partial differential equation \eqref{PDE11} with  different choices of $m$, which are presented in Figure 1.
\end{example}

\section{Conclusion}
In this paper, we have studied reverse-order law and mixed-type of reverse-order law for core inverse of tensors. We have also explored core inverse solutions of multilinear systems of tensors via the Einstein product and discussed two-dimension Poisson problems in the multilinear system framework.

\noindent {\bf{Acknowledgments}}\\
This research work was supported by Science and Engineering Research Board (SERB), Department of Science and Technology, India, under the Grant No. EEQ/2017/000747.

\bibliographystyle{abbrv}
\bibliographystyle{vancouver}
\bibliography{Reversecore}
\end{document}